\documentclass[UTF-8,reqno]{amsart}
\usepackage{enumerate}
\usepackage{mhequ}
\usepackage[margin=1.2in]{geometry}
\usepackage[tt=false]{libertine}
\usepackage{amssymb,url,color, booktabs,nccmath}
\usepackage{mathrsfs}
\usepackage{enumitem}
\usepackage{graphicx}
\usepackage{tikz}
\usetikzlibrary{shapes,snakes}
\usetikzlibrary{calc}
\usetikzlibrary{decorations.shapes}
\usepackage{color}
\usepackage[colorlinks=true]{hyperref}
\hypersetup{
    linkcolor=blue,          % color of internal links
    citecolor=red,        % color of links to bibliography
    filecolor=blue,      % color of file links
    urlcolor=cyan
}

\definecolor{darkergreen}{rgb}{0.0, 0.5, 0.0}

\numberwithin{equation}{section}

\newcommand{\be}{\begin{eqnarray}}
\newcommand{\ee}{\end{eqnarray}}
\newcommand{\ce}{\begin{eqnarray*}}
\newcommand{\de}{\end{eqnarray*}}
\newtheorem{theorem}{Theorem}[section]
\newtheorem{lemma}[theorem]{Lemma}
\newtheorem{remark}[theorem]{Remark}
\newtheorem{definition}[theorem]{Definition}
\newtheorem{proposition}[theorem]{Proposition}
\newtheorem{Examples}[theorem]{Example}
\newtheorem{corollary}[theorem]{Corollary}

\newenvironment{nouppercase}{%
  \renewcommand{\uppercasenonmath}[1]{}}{}

\def\[{{\Big[}}
\def\]{{\Big]}}
\def\<{{\langle}}
\def\>{{\rangle}}
\def\({{\Big(}}
\def\){{\Big)}}

\def\bx{{\mathbf{x}}}

\def\sgn{\mbox{\rm sgn}}

\def\={&\!\!=\!\!&}

\def\1{{\mathbf{1}}}

\def\geq{\geqslant}
\def\leq{\leqslant}
\def\ge{\geqslant}
\def\le{\leqslant}

\def\[{{\Big[}}
\def\]{{\Big]}}
\def\<{{\langle}}
\def\>{{\rangle}}
\def\({{\Big(}}
\def\){{\Big)}}

\def\bx{{\mathbf{x}}}

\def\sgn{\mbox{\rm sgn}}

\def\={&\!\!=\!\!&}
\def\bt{\begin{theorem}}
\def\et{\end{theorem}}
\def\bl{\begin{lemma}}
\def\el{\end{lemma}}
\def\br{\begin{remark}}
\def\er{\end{remark}}
\def\bx{\begin{Examples}}
\def\ex{\end{Examples}}
\def\bd{\begin{definition}}
\def\ed{\end{definition}}
\def\bp{\begin{proposition}}
\def\ep{\end{proposition}}
\def\bc{\begin{corollary}}
\def\ec{\end{corollary}}

\def\geq{\geqslant}
\def\leq{\leqslant}
\def\ge{\geqslant}
\def\le{\leqslant}

\def\<{\langle} \def\>{\rangle}

\allowdisplaybreaks

\tikzset{
        dot/.style={circle,fill=black,inner sep=0pt, outer sep=0.7pt, minimum size=1mm},
        Phi/.style={white!40!red,thick,snake=coil,segment amplitude=0.6pt, segment length=2pt},
         Z/.style={black!40!green,thick,snake=coil,segment amplitude=0.6pt, segment length=2pt},
        C/.style={thick,black!20!blue},
          Cr/.style={thick,black!20!red},
            Cg/.style={thick,black!20!green},
       }

\begin{document}

\title[Dean-Kawasaki Equation with Singular Interactions]{\LARGE Dean-Kawasaki Equation with  Biot-Savart and Keller-Segel Interactions: Existence and Large Deviations}

\author[Xiaohao Ji]{\large Xiaohao Ji}
\address[X. Ji]{Department of Mathematics, Freie Universit\"at Berlin, Germany}
\email{jixh1020@gmail.com}

\author[Yue Sun]{\large Yue Sun}
\address[Y. Sun]{ Academy of Mathematics and Systems Science, Chinese Academy of Sciences, Beijing 100190, China 
}
\email{sunyue183@mails.ucas.ac.cn}

\author[Zhengyan Wu]{\large Zhengyan Wu}
\address[Z. Wu]{Department of Mathematics, Technische Universit\"at M\"unchen, Boltzmannstr. 3, 85748 Garching, Germany}
\email{wuzh@cit.tum.de}

\begin{abstract}
We establish the existence of probabilistically weak, renormalized kinetic solutions to the Dean--Kawasaki equation with singular interaction kernels, including those of Biot--Savart and Keller--Segel type. Under a suitable regularization of the square-root noise coefficient, we further prove a restricted large deviation principle for probabilistically weak solutions to the regularized Dean--Kawasaki equation. The Biot--Savart and Keller--Segel type interactions introduce a scaling criticality within the $L^1$ framework of the Dean--Kawasaki equation and the associated skeleton equation, which gives rise to a significant new challenge. In contrast to [Fehrman, Gess; Invent. Math., 2023], our large deviation analysis relies on a novel exponential tightness argument specifically adapted to the Dean--Kawasaki noise. This approach, combined with a weak-strong uniqueness result for the associated skeleton equation, allows us to partially overcome the criticality induced by the singular interaction kernel.  
\end{abstract}

\subjclass[2010]{60H15; 35R60}
\keywords{}

\date{\today}

\begin{nouppercase}
\maketitle
\end{nouppercase}

\setcounter{tocdepth}{1}
\tableofcontents

\section{Introduction}\label{sec-1}
Fluctuating hydrodynamics introduces conservative SPDE models governed by fluctuation--dissipation principles, leading to specific formulations of highly irregular noise that encode the fluctuations of underlying microscopic particle systems. The Dean--Kawasaki equation is a prototypical example in this framework, where the noise is designed to capture fluctuations of systems of Brownian particles. In the presence of singular interactions, the Dean--Kawasaki equation can be interpreted as a fluctuating mean-field limit of general interacting particle systems described by mean-field SDEs with singular kernels. The associated large deviation rate function characterizes the asymptotic behavior of rare events for these mean-field systems and is also predicted, at a formal level, by MFT \cite{BDGJL}. Establishing a rigorous large deviation principle for the Dean--Kawasaki equation therefore provides a justification of the rate function conjectured by MFT from the perspective of fluctuating hydrodynamics.

In this paper, we establish the existence and the zero--noise large deviation principle for the Dean--Kawasaki equation with interactions of the form
\begin{align}\label{ker-V}
	\mathcal{V}[\rho] := -(\kappa_1 + \kappa_2 \mathbb{J}) \nabla \mathcal{G} \ast \rho,\quad\quad \kappa_1,\kappa_2\in\mathbb{R}, 
\end{align}
where $\mathcal{G}$ denotes the Green function on the torus $\mathbb{T}^2$:
$$
-\Delta\mathcal{G}\ast\rho=\rho-\int_{\mathbb{T}^2}\rho, 
$$
for some suitable test function $\rho$, and $\mathbb{J}$ is the $\frac{\pi}{2}$--rotation matrix given by 
$$
\mathbb{J} = \begin{bmatrix} 0 & -1 \\ 1 & 0 \end{bmatrix}.
$$
Consequently, $\mathcal{V}$ can be expressed as a linear combination of the Keller--Segel and Biot--Savart interaction kernels. Moreover, $-\kappa_1 > 0$ corresponds to the attractive case, whereas $-\kappa_1 < 0$ corresponds to the repulsive case. In particular, refer to \cite[Lemma 2.17]{BrzezniakFlandoliMaurelli2016Euler}, the interaction potential satisfies
\begin{align}\label{eq:Lp-nablaG}
V:=-(\kappa_1 + \kappa_2 \mathbb{J}) \nabla \mathcal{G} \in L^p_{\mathbb{T}^2}, \quad\text{for every } 1 \leq p < 2.
\end{align} 

More specifically, we first prove the existence of probabilistically weak renormalized kinetic solutions to the Dean--Kawasaki equation with such singular interactions, namely
\begin{equation}\label{SPDE-0-intro}
\partial_t \rho^{\varepsilon} = \Delta \rho^{\varepsilon} - \nabla \cdot \left(\rho^{\varepsilon} \mathcal{V}[\rho^{\varepsilon}]\right) - \sqrt{\varepsilon} \nabla \cdot \bigl(\sqrt{\rho^{\varepsilon}} \circ \xi_K\bigr), \quad (t,x) \in (0,T] \times \mathbb{T}^2,
\end{equation}
where $\xi$ denotes a vector--valued space--time white noise, $\xi_K$ is its ultraviolet cutoff, and $\circ$ indicates the Stratonovich integral.

Building on this existence result, we then investigate the zero--noise large deviation principle for probabilistically weak solutions to a regularized version of \eqref{SPDE-0-intro}, given by
\begin{equation}\label{SPDE-1}
\partial_t \rho^{\eta}
= \Delta \rho^{\eta}
- \nabla \cdot \bigl(\rho^{\eta} \mathcal{V}[\rho^{\eta}]\bigr)
- \sqrt{\varepsilon} \nabla \cdot \bigl(s_{\eta}(\rho^{\eta}) \circ \xi_{K}\bigr),
\quad (t,x) \in (0,T] \times \mathbb{T}^2,
\end{equation}
where $K(\varepsilon) \to \infty$ and $\eta(\varepsilon) \to 0^+$ as $\varepsilon \to 0^+$, and $\{s_{\eta}(\cdot): \eta > 0\}$ is a family of smooth approximations of the square--root function on $[0,\infty)$, namely, the mollifications on the square--root and the white noise are gradually removed as $\varepsilon \to 0$. 

\begin{remark}We list the mollification parameters as follows:
\begin{center}
\renewcommand{\arraystretch}{1.25}
\begin{tabular}{|c|c|c|}
\hline
$\eta$ 
& $K$ 
& $\varepsilon$ \\
\hline
Regularizes $\sqrt{\cdot}$ via $s_\eta$
& Mollifies the noise via $\xi_K=P_K\xi$
& Small-noise parameter \\
\hline
\end{tabular}
\end{center}
The solution to \eqref{SPDE-1} is denoted as $\rho^{\eta}$ to remind that the square--root of the equation is mollified. While these parameters are fixed in the proof of existence, in the derivation of the large deviation principle both $\eta$ and $K$ depend on $\varepsilon$ and consequently, $\rho^\eta$ also depends on $\varepsilon$. 

Throughout, we use the convention $s_{0}=\sqrt{\cdot}$, which allows the existence results to be stated uniformly for $\eta>0$ and $\eta=0$. In particular, under this convention we write
\begin{align*}
\rho^{\eta} = \rho^{\varepsilon}, \quad \text{if} \quad \eta \equiv 0.
\end{align*}
\end{remark}

Our analysis builds on the theory of renormalized kinetic solutions developed in \cite{FG24}, as well as on the large deviation and skeleton equation framework introduced in \cite{FG23}. However, new difficulties arise when treating singular interaction kernels that fall outside the Ladyzhenskaya--Prodi--Serrin class (cf.\ \cite[Assumption A1]{WZ24}), for which the weak convergence approach employed in \cite{FG23, WZ24} no longer applies. To overcome this issue, we establish the large deviation principle via a more direct approach. A key challenge lies in proving exponential tightness for the Dean--Kawasaki equation, which requires new ideas. In addition, we develop a weak--strong uniqueness argument within the renormalized kinetic solution framework to handle the criticality induced by the singular interaction kernel.

We next summarize the main results of the paper in the following subsection.

\subsection{Main Results}
We denote $\Psi(\zeta) := \int^\zeta_0\log(\zeta')\mathrm{d}\zeta'$ and define the space of initial conditions of finite entropy as 
\begin{align}\label{eq:def-Ent-space}
    \mathrm{Ent}(\mathbb{T}^2) := \left\{\rho: L^1(\mathbb{T}^2): \rho \geq 0, \int_{\mathbb{T}^2}\Psi(\rho) < \infty\right\}.
\end{align}

We impose a smallness condition on the attractive ($-\kappa_1>0$) Keller--Segel interaction, namely
\begin{align}\label{eq:small-KS}
-\kappa_1 < \frac{4}{C_{\mathrm{GN}}\|\rho_0\|_{L^1_{\mathbb{T}^2}}},
\end{align}
where $C_{\mathrm{GN}}$ denotes the optimal constant in the Gagliardo--Nirenberg inequality (\cite{CD16}) 
$$
\|f\|_{L^4_{\mathbb{T}^2}} \leq C_{\mathrm{GN}} \|\nabla f\|_{L^2_{\mathbb{T}^2}}^{\frac{1}{2}} \|f\|_{L^2_{\mathbb{T}^2}}^{\frac{1}{2}} + C\|f\|_{L^2_{\mathbb{T}^2}},
$$
which holds for all functions $f$ such that $f \in L^2(\mathbb{T}^2)$ and $\nabla f \in L^2(\mathbb{T}^2)$. Under this assumption, we are now in a position to state the main results of the paper.
The first part of the paper is devoted to the probabilistic construction of weak solutions to \eqref{SPDE-1} in the sense of Definitions~\ref{RKS smooth spde} and~\ref{W SPDE-1}.

\begin{theorem}\label{spde WP}
Assume that \eqref{eq:small-KS} holds. Fix $\varepsilon > 0$ and let $\rho_0 \in \mathrm{Ent}(\mathbb{T}^2)$ be a nonnegative initial datum, where $\mathrm{Ent}(\mathbb{T}^2)$ is defined in \eqref{eq:def-Ent-space}. We consider the following two cases.
\begin{enumerate}
\item For the mollified equation, namely $\eta > 0$ in \eqref{SPDE-1}, there exists a probabilistically weak renormalized kinetic solution $\rho^{\eta}$ to \eqref{SPDE-1} with initial data $\rho_0$. Moreover, $\rho^{\eta}$ coincides with a weak solution of \eqref{SPDE-1}.
\item For the limiting equation \eqref{SPDE-0-intro}, corresponding to $\eta = 0$ in \eqref{SPDE-1}, there exists a probabilistically weak renormalized kinetic solution.
\end{enumerate}
\end{theorem}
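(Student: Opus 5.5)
The plan is a compactness argument. First the singular kernel is regularized: replace $\mathcal{V}$ by $\mathcal{V}_\delta[\rho]:=-(\kappa_1+\kappa_2\mathbb{J})\nabla\mathcal{G}_\delta\ast\rho$ with a mollified Green function $\mathcal{G}_\delta$. For fixed $\eta>0$, $K<\infty$, $\delta>0$, the equation \eqref{SPDE-1} with $\mathcal{V}_\delta$ has smooth coefficients, a Lipschitz-type noise coefficient $s_\eta$, and finitely many noise modes. I therefore expect a pathwise weak solution to exist by the well-posedness theory of \cite{FG24} (smooth coefficient case), adapted to a Lipschitz nonlocal drift, possibly after an additional truncation $\rho\mapsto\rho\wedge M$ in the drift that is removed later. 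These approximate solutions are nonnegative and conserve mass, $\|\rho(t)\|_{L^1}=\|\rho_0\|_{L^1}$.

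The heart of the proof is a set of uniform a priori estimates obtained from Itô's formula applied to the entropy $\int\Psi(\rho)$. The diffusion contributes the dissipation $-4\int|\nabla\sqrt{\rho}|^2$. For the drift, $\int\nabla\cdot(\rho\mathcal{V}[\rho])\log\rho = -\int\rho\,\nabla\cdot\mathcal{V}[\rho]$. The Biot--Savart part is divergence free and drops out. The Keller--Segel part gives $\kappa_1\int\rho(\rho-\bar\rho)$, which is harmless in the repulsive case. In the attractive case it is bounded by $-\kappa_1\|\sqrt\rho\|_{L^4}^4\le -\kappa_1 C_{\mathrm{GN}}\|\nabla\sqrt\rho\|_{L^2}^2\|\rho\|_{L^1}+\text{l.o.t.}$, which the smallness condition \eqref{eq:small-KS} lets the dissipation absorb. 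This step requires the same bound uniformly in $\delta$, i.e. $\nabla\cdot\mathcal{V}_\delta$ should be a mollification of $\rho-\bar\rho$ with a positive kernel. The Stratonovich-to-Itô correction and the Itô term produce quantities of order $\varepsilon\|\nabla P_K\|^2$ times $\int s_\eta(\rho)^2/\rho$, or $\varepsilon\int s_\eta'(\rho)^2$. These are bounded for fixed $K$, and for $\eta=0$ the $\log$-singularities cancel between the correction and the Itô term, as in \cite{FG24}. The outcome is the bound
$$\mathbb{E}\sup_t\int\Psi(\rho)+\mathbb{E}\int_0^T\!\!\int|\nabla\sqrt\rho|^2\le C(\rho_0,\varepsilon,K,T).$$
By Gagliardo--Nirenberg this gives $\rho\in L^2_tL^2_x$ and $\nabla\rho=2\sqrt\rho\nabla\sqrt\rho\in L^1_tL^1_x$. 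Since $\mathcal{V}[\rho]\in L^2_tH^1_x$ (elliptic regularity) and $\rho\in L^2_tL^2_x$, the product $\rho\mathcal{V}[\rho]$ lies in $L^1_{t,x}$ with some extra integrability.

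Next comes tightness. A time-regularity estimate in a negative Sobolev space $W^{-s,1}$ follows from the equation, and combined with the gradient bound this gives compactness in $L^1_{t,x}$ through an Aubin--Lions--Simon type lemma. The laws of $(\rho_\delta,\text{noise})$ are then tight, and Skorokhod--Jakubowski representation yields almost surely converging copies. Passing to the limit $\delta\to0$ in the weak formulation for $\eta>0$ uses strong $L^1$ convergence of $\rho$ together with weak convergence of $\nabla\mathcal{G}_\delta\ast\rho_\delta$ in $L^2_tL^2_x$ and uniform integrability of $\rho_\delta^2$. Because the uniform bound only gives $\rho\in L^2_{t,x}$, I would first extract higher integrability (e.g. $L^{1+}$ of $\rho^2$) via an $L^p$-type Itô estimate, so that the product $\rho_\delta\nabla\mathcal{G}_\delta\ast\rho_\delta$ converges. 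The martingale part is identified by the standard three-martingale argument. In the renormalized kinetic formulation, the kinetic defect measures are bounded uniformly and lower semicontinuity yields the limit measure. That the solution is also a weak solution for $\eta>0$ follows because $s_\eta$ is smooth, so the renormalized formulation can be tested with cutoffs tending to $1$.

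For $\eta=0$ I would pass to the limit $\eta\to0$ in the solutions of part (1). This works because every bound above is uniform in $\eta$: the entropy estimate only uses $s_\eta(\xi)^2\lesssim\xi$ and the cancellation of the Itô correction terms. The kinetic formulation is kept on the velocity cutoffs $\psi(\xi)$ supported away from $\xi=0$ and $\infty$, where $s_\eta\to\sqrt\cdot$ smoothly. I expect the main obstacle to be the critical drift in the renormalized kinetic formulation: the term $\rho\,\mathcal{V}[\rho]\cdot\nabla\psi(\rho)$ and the associated defect must converge using only the $L^1$-critical entropy bounds. Here I would rely on the vanishing of the kinetic measure at infinity, which follows from the entropy estimate, together with the smallness \eqref{eq:small-KS}.
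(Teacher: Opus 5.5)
Your overall architecture matches the paper's: mollifying $\mathcal{G}$, an entropy estimate, Aubin--Lions--Simon, Jakubowski--Skorokhod, and passage to the limit in the kinetic formulation. In the entropy estimate the Biot--Savart part drops out and the attractive Keller--Segel part is absorbed via Gagliardo--Nirenberg and \eqref{eq:small-KS}, as in the paper. However, two steps do not go through as you describe them.

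\textbf{The pathwise entropy inequality is never established.} Definitions \ref{RKS smooth spde} and \ref{W SPDE-1} require \eqref{eq-2.5} (resp.\ \eqref{eq-2.5-2}) almost surely for every $t$. Its right-hand side contains the stochastic integral of the limit process against the limit noise. Your bound $\mathbb{E}\sup_t\int\Psi(\rho)+\mathbb{E}\int_0^T\!\int|\nabla\sqrt{\rho}|^2\le C$ holds only in expectation and does not give this. You need two things:
(i) the pathwise inequality for the approximations, from It\^o's formula for a regularized entropy whose regularization is removed almost surely, with the kernel term absorbed pathwise using mass conservation and \eqref{eq:small-KS};
(ii) the limit of $\sqrt{\varepsilon}\int_0^t\int\frac{s_{\eta_k}(\rho^k)}{\sqrt{\rho^k}}\nabla\sqrt{\rho^k}\cdot\xi^k_K$. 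Its integrand converges only weakly in $L^2$, so the stochastic integrals cannot simply be passed to the limit.

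The paper puts these entropy martingales into the Skorokhod tuple (tightness in $\mathcal{C}_{[0,T]}\mathbb{R}$, Proposition \ref{T rho eta}). It then identifies the limit as $\sqrt{\varepsilon}\int_0^t\langle\nabla\sqrt{\rho},\xi_K\rangle$ through its cross-variations with the $B_j$ and its quadratic variation. This works because $P_K$ has finite rank and $\langle\frac{s_{\eta_k}(\rho^k)}{\sqrt{\rho^k}}\nabla\sqrt{\rho^k},e_j\rangle=-\langle\int_0^{\rho^k}\frac{s_{\eta_k}(\zeta)}{2\zeta}\mathrm{d}\zeta,\nabla e_j\rangle$ converges strongly. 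Weak lower semicontinuity of $\|\nabla\sqrt{\rho}\|_{L^2_{[0,T]}L^2_{\mathbb{T}^2}}$ then yields \eqref{eq-2.5}. This property is not cosmetic: Lemma \ref{exponential-entropy} relies on exactly it.

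\textbf{Time regularity is not uniform in $\eta$.} In It\^o form the correction flux $\frac{\varepsilon F_{1,K}}{2}[s'_\eta(\rho)]^2\nabla\rho$ is bounded in $L^1$ by $\|s'_\eta\|^2_{L^\infty}\|\rho\|^{1/2}_{L^1}\|\nabla\sqrt{\rho}\|_{L^2}$, which blows up as $\eta\to0$. There is no reason to expect an $\eta$-uniform substitute, since the formal limit $\frac14\nabla\log\rho$ need not even be a distribution. So your claim that every bound is uniform in $\eta$ fails here, and the Aubin--Lions step for $\eta\to0$ collapses.

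Following \cite{FG24}, the paper instead estimates the truncations $h_\delta(\rho)=\psi_\delta(\rho)\rho$, which vanish near zero ($\delta$ here is a truncation level, not your kernel parameter). These are bounded in the time-regularity norm uniformly in $\eta,\gamma$ for each fixed $\delta$ (Lemma \ref{spde E T2}). Tightness of $\rho^{\gamma,\eta}$ in $L^1_{[0,T]}L^1_{\mathbb{T}^2}$ then follows from $|\rho-h_\delta(\rho)|\le\delta$ together with \cite[Lemma 5.20]{FG24}.

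On the other hand, the extra $L^p$-It\^o estimate you plan for the drift is unnecessary, and the drift is not the main obstacle. Since $\psi$ is compactly supported in velocity, the kinetic formulation only needs $\mathcal{V}_{\gamma_k}[\rho^k]\to\mathcal{V}[\rho]$ in $L^1_{[0,T]}L^1_{\mathbb{T}^2}$ (the paper's $\gamma$ is your $\delta$). This follows from strong $L^1$ convergence and $\nabla\mathcal{G}\in L^1$. For the weak formulation, $\nabla\mathcal{G}\in L^p$ with $p\in(1,2)$ and Gagliardo--Nirenberg give $\|\rho\mathcal{V}[\rho]\|_{L^1_{\mathbb{T}^2}}\lesssim\|\nabla\sqrt{\rho}\|^{2/p}_{L^2_{\mathbb{T}^2}}+C$. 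This is uniformly integrable in time, which suffices.
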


Having established the existence of probabilistically weak solutions, we next turn to the study of the small-noise limit and establish a restricted large deviation principle.
To this end, we define the rate function $\mathcal{I} : L_{[0,T]}^1 L_{\mathbb{T}^2}^1 \rightarrow [0,\infty]$ by
\begin{align}\label{I0-intro}
 \mathcal{I}(\rho)= \sup_{\varphi\in C^{\infty}([0,T]\times\mathbb{T}^2)}\Big\{\langle \rho,\varphi \rangle_{L^2_{\mathbb{T}^2}}\Big|^T_0 
 & - \langle \rho,(\partial_t + \Delta)\varphi\rangle_{L_{[0,T]}^2L^2_{\mathbb{T}^2}} 
 - \langle\rho \mathcal{V}[\rho],\nabla\varphi\rangle_{L_{[0,T]}^2L^2_{\mathbb{T}^2}}\nonumber\\
& - \frac{1}{2}\langle \rho\nabla \varphi,\nabla\varphi\rangle_{L_{[0,T]}^2L^2_{\mathbb{T}^2}}\Big\},
\quad \text{if } \rho\in \mathcal{E}_{\mathrm{fin}},
\end{align}
and $\mathcal{I}(\rho)=+\infty$ otherwise.
Here, the domain $\mathcal{E}_{\mathrm{fin}}$ of finite entropy dissipation is defined by
\begin{align}\label{Efin-intro}
\mathcal{E}_{\mathrm{fin}}
:=\Big\{\rho\in L_{[0,T]}^{\infty}L^1_{\mathbb{T}^2} :
\|\nabla\sqrt{\rho}\|_{L_{[0,T]}^{2}L^2_{\mathbb{T}^2}}<\infty,
\ \text{and } \rho\geq 0 \ \text{a.e.}\Big\}.
\end{align}

The noise of the equations \eqref{SPDE-0-intro} and \eqref{SPDE-1} is constructed as a standard way in \cite{FG24,FG23,DFG20}, we briefly summarize it as follows. Let $(e_j)_{j\in\mathbb{Z}^2}$ be the Fourier basis of $L^2_{\mathbb{T}^2}$ and let $(B_j)_{j\in\mathbb{Z}^2}$ be a family of i.i.d. two-dimensional Brownian motions. For every $K\in\mathbb{N}$, we define 
\begin{align}\label{noise-def}
\xi:=\sum_{j\in\mathbb{Z}^2}\dot{B}_je_j,\ \text{and }\ \xi_K:=\sum_{|j|\leq K}\dot{B}_je_j.
\end{align}
We denote
\begin{equation}\label{eq:def-NK}
F_{1,k}:=\sum_{\substack{k\in \mathbb{Z}^{d}\\|k| \leq K}}1,\quad \text{and }\quad N_K := \sum_{\substack{k\in \mathbb{Z}^{d}\\|k| \leq K}}|k|^2 \sim K^{d+2},
\end{equation}
and we will use the It\^o formulations of the equations \eqref{SPDE-0-intro} and \eqref{SPDE-1}: 
\begin{equation}\label{SPDE-0-intro-ito}
\partial_t \rho^{\varepsilon} = \Delta \rho^{\varepsilon} - \nabla \cdot \left(\rho^{\varepsilon} \mathcal{V}[\rho^{\varepsilon}]\right) - \sqrt{\varepsilon} \nabla \cdot \bigl(\sqrt{\rho^{\varepsilon}} \xi_K\bigr)+\frac{\varepsilon F_{1,K}}{8}\nabla\cdot\left(\frac{1}{\rho^{\varepsilon}}\nabla\rho^{\varepsilon}\right), \quad (t,x) \in (0,T] \times \mathbb{T}^2,
\end{equation}
and 
\begin{equation}\label{SPDE-1-ito}
\partial_t \rho^{\eta}
= \Delta \rho^{\eta}
- \nabla \cdot \bigl(\rho^{\eta} \mathcal{V}[\rho^{\eta}]\bigr)
- \sqrt{\varepsilon} \nabla \cdot \bigl(s_{\eta}(\rho^{\eta}) \xi_{K}\bigr)+\frac{\varepsilon F_{1,K}}{2}\nabla\cdot\left(s_{\eta}'(\rho^{\eta})^2\nabla\rho^{\eta}\right),
\quad (t,x) \in (0,T] \times \mathbb{T}^2. 
\end{equation}
With these preparations, we are now in a position to state the main large deviation result. 

\begin{theorem}[Restricted LDP]\label{thm:LDP}
Assume that \eqref{eq:small-KS} holds, and that the initial datum satisfies $\rho_0 \in \mathrm{Ent}(\mathbb{T}^2)$. Let $\{\rho^{\eta(\varepsilon)}\}$ be a family of probabilistically weak solutions to \eqref{SPDE-1} with initial data $\rho_0$. Then, under the scaling regime
\begin{align}
\lim_{\varepsilon\rightarrow 0}K(\varepsilon)=\infty, \quad
\lim_{\varepsilon\rightarrow 0}\varepsilon N_{K(\varepsilon)}=0, \quad
\limsup_{\varepsilon\rightarrow 0}\varepsilon N_{K(\varepsilon)}\|s'_{\eta(\varepsilon)}\|_{L^\infty}^2<\infty, \quad
\lim_{\varepsilon\rightarrow0}\eta(\varepsilon) = 0,
\end{align}
the laws $\mu^{\varepsilon}=\mathbb{P}\circ(\rho^{\eta(\varepsilon)})^{-1}$ satisfy a restricted large deviation principle. More precisely, for every open set $G \subset L_{[0,T]}^{1}L^1_{\mathbb{T}^2}$,
\begin{align}\label{RS E}
\liminf_{\varepsilon\rightarrow0}\varepsilon \log \mu^{\varepsilon}(G)
\geq -\inf_{\rho\in G\cap\mathcal{C}_0}\mathcal{I}(\rho),
\end{align}
where the regularity subclass used for the restriction is defined by
\begin{align}\label{S-intro}
\mathcal{C}_0
= \bigcup_{p'>2}
\left\{\rho\in L_{[0,T]}^{\infty}L^1_{\mathbb{T}^2} :
\rho\in L_{[0,T]}^{\infty}W_{\mathbb{T}^2}^{1,p'}\right\}.
\end{align}
Moreover, for any closed set $F \subset L_{[0,T]}^{1}L^1_{\mathbb{T}^2}$,
\begin{align}\label{LDP LB}
\limsup_{\varepsilon\rightarrow0}\varepsilon \log \mu^{\varepsilon}(F)
\leq -\inf_{\rho\in F}\mathcal{I}(\rho).
\end{align}
\end{theorem}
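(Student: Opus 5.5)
The restricted LDP will be proved directly rather than through the weak convergence method: exponential tightness plus a local upper bound gives \eqref{LDP LB}, and a change of measure together with weak--strong uniqueness for the skeleton equation gives \eqref{RS E}. The two halves are treated separately.

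\textbf{Upper bound \eqref{LDP LB}.}

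First, we prove exponential tightness of $\{\mu^\varepsilon\}$ in $L^1_{[0,T]}L^1_{\mathbb{T}^2}$.
\begin{enumerate}
\item Apply It\^o's formula to the entropy $\int\Psi(\rho^\eta)$ for \eqref{SPDE-1-ito}. Because of \eqref{eq:small-KS}, the attractive Keller--Segel term is absorbed: $\int\rho^\eta\nabla\cdot\mathcal{V}[\rho^\eta]=\kappa_1\int\rho^\eta(\rho^\eta-\bar\rho)$, $\|\rho\|_{L^2}^2=\|\sqrt\rho\|_{L^4}^4$, and the Gagliardo--Nirenberg inequality gives $\|\sqrt\rho\|^4_{L^4}\le C_{\mathrm{GN}}^2\|\nabla\sqrt\rho\|_{L^2}^2\|\rho_0\|_{L^1}+\dots$. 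Hence a fraction of the Fisher information $4\int|\nabla\sqrt{\rho^\eta}|^2$ survives. The Biot--Savart part is divergence free and does not contribute.
\item The It\^o correction is of size $\varepsilon N_K\|s_\eta'\|_\infty^2$, which is bounded by the scaling regime.
\item The martingale $M_t=\sqrt\varepsilon\int_0^t\!\int \frac{s_\eta(\rho)}{\rho}\nabla\rho\cdot\xi_K$ has quadratic variation $\varepsilon\int\!\!\int |s_\eta(\rho)/\rho|^2|\nabla\rho|^2\lesssim \varepsilon\int|\nabla\sqrt\rho|^2$. It can therefore be controlled by the exponential martingale inequality $\mathbb{P}(M_T-\frac{\lambda}{2\varepsilon}\langle M\rangle_T>R/\lambda)\le e^{-R/\varepsilon}$ with $\lambda$ small, so that $\frac{\lambda}{2\varepsilon}\langle M\rangle$ is absorbed into the surviving dissipation.
\item This yields $\mathbb{P}\big(\sup_t\int\Psi(\rho^\eta_t)+\int_0^T\!\int|\nabla\sqrt{\rho^\eta}|^2>R\big)\le e^{-cR/\varepsilon}$.
\item Combine this with an exponential bound on a negative Sobolev time-regularity norm of $\rho^\eta$, obtained from the equation tested against smooth $\varphi$ with the same martingale trick. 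The noise term is $\sqrt\varepsilon$ times a stochastic integral, and the drift $\rho\mathcal{V}[\rho]$ is estimated in $L^1_tL^1_x$ via $\rho\in L^2_tL^2_x$ (from the Fisher information) and $\mathcal{V}[\rho]\in L^2_tL^2_x$ by \eqref{eq:Lp-nablaG} and Young.
\item Aubin--Lions--Simon then gives compact sets $K_R$ with $\mu^\varepsilon(K_R^c)\le e^{-R/\varepsilon}$.
\end{enumerate}

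Second, we prove the local upper bound. For each $\varphi\in C^\infty$, the exponential functional
\[
\exp\Big(\varepsilon^{-1}\Big(\langle\rho,\varphi\rangle\big|_0^T-\langle\rho,(\partial_t+\Delta)\varphi\rangle-\langle\rho\mathcal{V}[\rho],\nabla\varphi\rangle-\tfrac12\langle s_\eta(\rho)^2,|\nabla\varphi|^2\rangle_K\Big)+\text{error}\Big)
\]
is a supermartingale (a Girsanov/exponential martingale computation). The error comes from the It\^o correction and from $s_\eta^2-\rho$ and $P_K$; these vanish by the scaling assumptions uniformly on the entropy balls $K_R$. A Chebyshev bound on small balls around $\rho\in F\cap K_R$, with the functional continuous in $L^1_tL^1_x$ on $K_R$ (including the nonlinear term $\rho\mathcal{V}[\rho]$, by the $L^2$ compactness available on $K_R$), gives $\limsup\varepsilon\log\mu^\varepsilon(B_\delta(\rho))\le-\mathcal{I}(\rho)+o(1)$, capped at $-R$ off $K_R$ and restricted to $\mathcal{E}_{\mathrm{fin}}$. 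A finite covering of $F\cap K_R$ and then $R\to\infty$ yields \eqref{LDP LB}.

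\textbf{Lower bound \eqref{RS E}.}

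Fix $\rho\in G\cap\mathcal{C}_0$ with $\mathcal{I}(\rho)<\infty$.
\begin{enumerate}
\item By the Fehrman--Gess representation, $\mathcal{I}(\rho)=\frac12\|g\|^2_{L^2}$ where $\partial_t\rho=\Delta\rho-\nabla\cdot(\rho\mathcal{V}[\rho])-\nabla\cdot(\sqrt\rho g)$; that is, $\rho$ solves the skeleton equation with control $g$ (approximate $g$ by $P_Kg$ if needed).
\item Apply Girsanov with drift $\varepsilon^{-1/2}P_K g$ to obtain $\mathbb{Q}^\varepsilon$, with $\varepsilon\log\frac{d\mathbb{P}}{d\mathbb{Q}^\varepsilon}\approx-\frac12\|g\|^2$ in $\mathbb{Q}^\varepsilon$-probability. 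Under $\mathbb{Q}^\varepsilon$, $\rho^\eta$ solves the controlled equation with vanishing noise.
\item Exponential tightness (or merely the $\mathbb{Q}^\varepsilon$-uniform entropy estimates) gives tightness. Limits solve the skeleton equation in the renormalized kinetic sense with finite entropy dissipation.
\item The key step is weak--strong uniqueness: since $\rho\in L^\infty_tW^{1,p'}$ with $p'>2$, $\rho$ is bounded and $\mathcal{V}[\rho]\in L^\infty$, which makes the criticality tractable. I would compare any finite-entropy-dissipation kinetic solution $u$ with $\rho$ via the kinetic doubling-of-variables $L^1$ contraction as in \cite{FG23}. The singular interaction difference $u\mathcal{V}[u]-\rho\mathcal{V}[\rho]$ is split as $(u-\rho)\mathcal{V}[\rho]+u\mathcal{V}[u-\rho]$. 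The first term is handled by $\nabla\rho\in L^{p'}$ and $\mathcal{V}[\rho]$ Lipschitz-ish; the second by $\|\mathcal{V}[u-\rho]\|_{L^q}\lesssim\|u-\rho\|_{L^1}$ for $q<2$, paired with $\nabla u$-type kinetic error terms, closing via Gronwall.
\item Hence $\rho^\eta\to\rho$ in $\mathbb{Q}^\varepsilon$-probability, and the standard change-of-measure inequality gives $\liminf\varepsilon\log\mu^\varepsilon(G)\ge-\frac12\|g\|^2=-\mathcal{I}(\rho)$.
\end{enumerate}

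\textbf{Main obstacle.} I expect the hardest parts to be the weak--strong uniqueness estimate, where the kinetic defect and the $L^1$-critical kernel must be balanced exactly at $p'>2$, and the exponential tightness martingale absorption under \eqref{eq:small-KS}.
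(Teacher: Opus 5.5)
The gap is in the weak--strong uniqueness step, which is the reason for the restriction to $\mathcal{C}_0$. The rest of your outline follows the paper's route:
\begin{itemize}
\item the exponential Fisher-information bound by absorbing the entropy martingale (take the pathwise entropy inequality from the solution definition; It\^o's formula for $\Psi$ is not justified for a general weak solution);
\item the exponential-martingale upper bound with a covering argument and exponential tightness;
\item the Girsanov shift by the deterministic control $\varepsilon^{-1/2}P_Kg$, where your direct change-of-measure inequality is equivalent to the relative-entropy lemma the paper uses.
\end{itemize}
Your test-function route to exponential time regularity differs from the paper's $C^pp^{p/2}$ moment bounds but is workable, since the tested martingale has quadratic variation at most $C\varepsilon(t-s)\|\nabla\varphi\|_{L^\infty}^2\|\rho_0\|_{L^1}$.

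You split $u\mathcal{V}[u]-\rho\mathcal{V}[\rho]=(u-\rho)\mathcal{V}[\rho]+u\,\mathcal{V}[u-\rho]$, so the kernel difference meets the weak solution $u$. Formally the second piece contributes
\begin{equation*}
-\int_{\mathbb{T}^2}\operatorname{sgn}(u-\rho)\,\nabla\cdot\bigl(u\,\mathcal{V}[u-\rho]\bigr)
=-\int_{\mathbb{T}^2}\operatorname{sgn}(u-\rho)\,\nabla u\cdot\mathcal{V}[u-\rho]-\kappa_1\int_{\mathbb{T}^2}u\,|u-\rho|+\cdots,
\end{equation*}
and neither term is bounded by $C\|u-\rho\|_{L^1}$.
\begin{itemize}
\item With only $\nabla\sqrt{u}\in L^2$, the gradient $\nabla u=2\sqrt{u}\nabla\sqrt{u}$ lies at best in $L^r$ with $r<2$. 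Meanwhile $\|\mathcal{V}[u-\rho]\|_{L^q}\lesssim\|u-\rho\|_{L^1}$ holds only for $q<2$, since $V\sim|x|^{-1}$. H\"older would need $1/r+1/q\le 1$, which is impossible. This is exactly the $L^1$-criticality of Section~\ref{sec:supercri}, and it already fails for pure Biot--Savart ($\kappa_1=0$).
\item The second term contains the unbounded $u$ and has the bad sign in the attractive case $\kappa_1<0$.
\end{itemize}
Conversely, the regularity $\nabla\rho\in L^{p'}$ you spend on the first piece is not needed there, because that piece is conservative.

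The fix is to reverse the roles, as the paper does in the terms $K_{11}-J$ and $K_{21}+J$:
$$u\mathcal{V}[u]-\rho\mathcal{V}[\rho]=(u-\rho)\mathcal{V}[u]+\rho\,\mathcal{V}[u-\rho].$$
\begin{itemize}
\item \textbf{First piece.} This is transport of the difference by the rough field $\mathcal{V}[u]\in L^2_{t,x}$. Formally $\operatorname{sgn}(u-\rho)\nabla\cdot((u-\rho)\mathcal{V}[u])=\nabla\cdot(|u-\rho|\,\mathcal{V}[u])$, which integrates to zero. The terms coming from $\nabla\cdot\mathcal{V}[u]=\kappa_1\bigl(u-\int_{\mathbb{T}^2}u\bigr)$, which contain the unbounded $u$, cancel exactly. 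In the kinetic doubling, the leftover commutators ($K_3,K_4$ in the paper) vanish as $\delta\to0$. The reason is that $\bigl|\int_{\rho}^{u}\zeta^M(\xi)\,\mathrm{d}\xi\bigr|\le|u-\rho|\le\delta$ on the support of $\kappa^\delta(u-\rho)$, which compensates $\kappa^\delta\sim\delta^{-1}$. This uses only $\nabla\sqrt{u},\nabla\sqrt{\rho}\in L^2$, $\mathcal{V}[u]\in L^2$ and $\rho\in L^\infty$.
\item \textbf{Second piece.} It equals $\nabla\rho\cdot\mathcal{V}[u-\rho]+\kappa_1\rho\,\bigl(u-\rho-\int_{\mathbb{T}^2}(u-\rho)\bigr)$. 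It is bounded by $\bigl(\|\nabla\rho\|_{L^{p'}}\|V\|_{L^p}+C|\kappa_1|\|\rho\|_{L^\infty}\bigr)\|u-\rho\|_{L^1}$ with $1/p+1/p'=1$ and $p<2$.
\end{itemize}
The second piece is the only place where $p'>2$ enters, and it is what lets Gronwall close.
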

\begin{remark}
The regularity subclass $\mathcal{C}_0$ is the domain on which the large deviation lower and upper bounds can be matched.
This choice is motivated by the weak--strong uniqueness property of the skeleton equation associated with \eqref{SPDE-1}, which plays a crucial role in the analysis of large deviations.
We emphasize that the class $\mathcal{C}_0$ is not optimal, and whether the restricted large deviation principle can be extended to the full space $L_{[0,T]}^{1}L^1_{\mathbb{T}^2}$ remains an open problem within the methodology developed in this paper. 
\end{remark}

\subsection{Related Literature}\label{literature}
Since the works of Kawasaki \cite{K98} and Dean \cite{D96}, the Dean--Kawasaki equation has been used as a fluctuating hydrodynamic description of empirical densities in interacting particle systems. Its unregularized form with space--time white noise is too singular to be interpreted as an ordinary function-valued SPDE. In the martingale formulation, Konarovskyi, Lehmann and von Renesse \cite{KLvR19,KLR20} identified a rigidity phenomenon: the canonical Dean--Kawasaki martingale problem admits only atomic solutions at the particle levels, which rules out nontrivial absolutely continuous solutions in the naive continuum model. Related perspectives include the measure-valued constructions connected with Wasserstein diffusion \cite{KvR19,D22} and second-order Dean--Kawasaki type models \cite{MRZ25}.\\

For conservative SPDEs with spatially regularized or correlated noise, the modern solution theory is based on entropy estimates and kinetic formulations. The kinetic approach for stochastic scalar conservation laws developed in \cite{LPS13,FG16,GS15} and the theory of nonlinear diffusion with conservative gradient noise in \cite{FG19,DG20} provide important antecedents. In their pioneering work \cite[Theorem~1.1]{FG24}, Fehrman and Gess prove existence, uniqueness, and $L^1$-contraction for stochastic kinetic solutions of conservative Dean--Kawasaki type equations under hypotheses that include the square-root coefficient through the locally $\frac{1}{2}$-H\"older noise regime. Their renormalized kinetic formulation is the starting point for our notion of solution, while the present paper has to add a singular nonlocal drift that is not covered by the local-flux term in \cite{FG24}. Subsequent developments of this direction include the SSEP fluctuation equation of Dirr, Fehrman and Gess \cite{DFG}, whole-space conservative SPDEs \cite{FG25}, non-stationary Stratonovich noise \cite{fehrman2025stochastic}, bounded-domain equations \cite{Shyam25}, and the Vlasov--Fokker--Planck--Dean--Kawasaki system \cite{HWZ25}. In \cite{WWZ22}, existence of probabilistically weak renormalized kinetic solutions is provens under Assumption~(A1), namely a Ladyzhenskaya--Prodi--Serrin condition $V\in L^{r}_tL^p_x$ with $\frac{d}{p}+\frac{2}{r}\leq 1$ and $p>d$; pathwise uniqueness in Theorem~1.1 additionally uses Assumption~(A2), an integrability condition on $\nabla\cdot V$. The Biot--Savart and Keller--Segel kernels in the present paper are genuinely outside this framework. \\

On the other hand, large deviations for Dean--Kawasaki type equations are inseparable from the analytic properties of the skeleton equation. The weak convergence method is a standard probabilistic framework for such problems; see Dupuis and Ellis \cite{DE97} and Budhiraja, Dupuis and Maroulas \cite{BDM11}. Fehrman and Gess \cite[Theorem~1]{FG23} establish well-posedness and stability of the skeleton PDE in the energy-critical entropy space, and \cite[Theorem~39]{FG23} identifies the lower semicontinuous envelope of the zero-range rate function; these results underpin their use of the weak convergence method for conservative SPDEs. The third author and Zhang \cite[Theorem~1.2]{WZ24} apply the same philosophy to singular nonlocal interactions satisfying the same Assumptions~(A1)--(A2) as mentioned above. In the present Biot--Savart/Keller--Segel setting, full uniqueness of the skeleton equation is unavailable at the natural entropy level, so the weak convergence framework does not by itself yield a full large deviation principle.

The restriction in our lower bound is motivated by a broader phenomenon in dynamical large deviations without global skeleton uniqueness. Heydecker \cite{Heydecker23} gives a counterexample for Kac's conservative particle system showing that a predicted global lower bound can fail even when an upper bound and a restricted lower bound are available. Gess, Heydecker and the third author \cite{GHW23} develop a restricted large deviation principle for the three-dimensional Landau--Lifshitz--Navier--Stokes equations in which the matching lower bound is tied to the closure of a weak--strong uniqueness class and to the deterministic energy equality. Wang and the third author \cite{WW25} extend this weak--strong-uniqueness strategy to the forced surface quasi-geostrophic equation in negative Sobolev-type spaces. By contrast, the zero-range result of Gess and Heydecker \cite{GH23} is a positive case where entropy dissipation and skeleton stability lead to a full large deviation principle.

Martini and Mayorcas study an additive-noise approximation to Keller--Segel--Dean--Kawasaki dynamics, which is complementary to the multiplicative conservative model considered here. Their local theory \cite[Theorem~1.2]{AA25} treats a paracontrolled, renormalized solution to a two-dimensional parabolic-elliptic Keller--Segel equation driven by additive noise $\nabla\cdot(\sigma\xi)$; the renormalization is part of the singular SPDE analysis and differs from the conservative square-root fluctuation structure. Their small-noise paper \cite{AA24} obtains law-of-large-numbers, central-limit, and large-deviation results in distribution spaces.\\

The deterministic and particle-system literature on singular interactions is also relevant to the treatment of the drift. Classical mean-field limits for smooth kernels go back to McKean \cite{M67}. Jabin and Wang \cite{JW18} introduced quantitative propagation-of-chaos estimates for kernels controlled in negative Sobolev norms, Bresch, Jabin and Wang \cite{BJW19} applied related modulated-energy ideas to singular kernels including the Patlak--Keller--Segel model, Duerinckx \cite{D16} treated mean-field limits for Riesz interaction gradient flows, and Serfaty \cite{S20} developed a modulated-energy approach for Coulomb-type flows. These works guide the handling of Biot--Savart and Keller--Segel singularities at the deterministic level, but they do not address the conservative square-root noise or the entropy-level skeleton compactness needed here. \\

Several adjacent developments clarify what the present paper does not attempt to optimize. Weak-error estimates for regularized Dean--Kawasaki approximations were proved in \cite{DKP24,DJP25}. Fluctuation and central-limit results for conservative SPDEs and related particle systems include \cite{WZZ21,GWZ24,DFG,CF23}, while central-limit and moderate-deviation questions for stochastic scalar conservation laws were studied by the third author and Zhang \cite{WZ22}. Numerical, structure-preserving, and long-time questions are pursued in \cite{CF23arma,CFIR26,fehrman2022ergodicity,PW25}.

\subsection{Key ideas and technical comments}\label{sec-1-3}
We now explain the proof strategy. The argument has two main components: exponential tightness for the laws of \eqref{SPDE-1}, and direct large-deviation bounds in which the lower bound is restricted to the weak--strong uniqueness class of the skeleton equation.

{\bf Exponential tightness.}
We first establish exponential tightness for weak solutions of \eqref{SPDE-1} in $L^1_{[0,T]\times \mathbb{T}^2}$, equipped with the strong topology. In the spirit of the Aubin-Lions compactness criterion, this requires two key exponential estimates: the exponential entropy dissipation estimate and the exponential time-regularity estimate. 

For the exponential entropy dissipation estimates, we work within the framework of the pathwise entropy dissipation inequality \eqref{eq-2.5}. We then exploit the properties of exponential supermartingales to derive the desired bounds. A key observation in this argument is that the quadratic variation of the martingale term can be absorbed by the Laplacian term, which hinges on the fluctuation-dissipation structure inherent in the Dean-Kawasaki dynamics. As a result, we obtain
\begin{align*}
\limsup_{R\rightarrow \infty}\lim_{\varepsilon\rightarrow 0}\varepsilon \log\mathbb{P}^{\varepsilon}\Big(\|\nabla\sqrt{\rho^{\varepsilon}}\|_{L^2_{[0,T]}L^2_{\mathbb{T}^2}}^2>R\Big)=-\infty. 	
\end{align*}
In contrast with \cite{FG23,FG24}, this provides a new estimate for the Dean--Kawasaki equation. The pathwise entropy inequality \eqref{eq-2.5} restricts the solutions to a specific subclass, in comparison with \cite[Definition 3.2]{FG24}. Together with the preservation of the $L^1_{\mathbb{T}^2}$-mass, this yields an exponential estimate in $L^2_{[0,T]}W^{1,1}_{\mathbb{T}^2}$. 

To apply the Aubin-Lions compactness criterion, it is natural to consider the quantity 
\begin{align*}
\exp\left(\frac{1}{\varepsilon}\|\rho^{\varepsilon}\|_{W^{\alpha,1}_{[0,T]}H^{-\gamma}_{\mathbb{T}^2}}\right)
\end{align*}
for some $\alpha\in(0,1/2)$ and $\gamma>2$. However, this introduces significant challenges. In contrast to \cite{GHW23}, where Gess, Heydecker and the third author established similar exponential time-regularity estimates, their approach crucially relies on the additive structure of the noise and Gaussian properties such as Fernique-type bounds. The multiplicative noise in Dean-Kawasaki dynamics presents additional difficulties: the technique in \cite[Lemma 4.6]{GHW23} no longer applies, and the $W^{\alpha,1}_{[0,T]}H^{-\gamma}_{\mathbb{T}^2}$-norm breaks the martingale structure of the noise. To overcome this, we employ the Taylor expansion of the exponential function and estimate the $p$-moment bound of the $W^{\alpha,1}_{[0,T]}H^{-\gamma}_{\mathbb{T}^2}$-norm for every $p \geq 1$.

{\bf Large deviations by hand.}
The upper bound is established using a standard exponential martingale method, where the exponential tightness plays a crucial role in the analysis. For the lower bound, we adopt the entropy method (see Lemma \ref{entropymethod} for details). Compared to the approach in \cite{FG23}, where Fehrman and Gess employed the classical weak convergence method, our use of the skeleton equation \eqref{ske} follows a logically different route. In the weak convergence framework, it is essential to prove the existence of solutions to the skeleton equation. In contrast, the entropy method involves explicitly constructing a control $g$ such that any given trajectory $\rho \in L^1_{[0,T]}L^1_{\mathbb{T}^2}$ satisfying $\mathcal{I}(\rho) < \infty$ is a weak solution to \eqref{ske} with respect to the control $g$. This allows us to restrict the analysis of the lower bound to trajectories lying within the regularity class $\mathcal{C}_0$.

More specifically, the verification of the entropy method requires the construction of a sequence of probability measures concentrating on an arbitrarily chosen trajectory. This necessitates a compactness argument and a passage to the limit for the skeleton equation \eqref{ske}. However, due to the absence of pathwise uniqueness, the compactness approach alone does not guarantee that the sequence of measures converges to the desired trajectory. To circumvent this difficulty, we verify the entropy method by selecting a trajectory within the class $\mathcal{C}_0$, thereby enabling the use of the weak-strong uniqueness. This is the key reason why we must restrict the lower bound to the class $\mathcal{C}_0$.

\subsection{Criticality of the skeleton equation}\label{sec:supercri}
In the study of large deviations for SPDEs, the associated skeleton equation often plays a central role. For equation \eqref{SPDE-1}, the corresponding skeleton equation is given by
\begin{equation}\label{ske}
\partial_t \rho = \Delta \rho - \nabla \cdot (\rho \mathcal{V}[\rho]) - \nabla \cdot \left( \sqrt{\rho}\, g \right),
\end{equation}
where the choice of control $g \in L^2_{[0,T]}L^2_{\mathbb{T}^2}(\mathbb{R}^2)$ reflects the Cameron--Martin space associated with space-time white noise. %Specifically, a straightforward mathematical analysis shows that both the Biot-Savart and Keller-Segel kernels satisfy $L^p_{\mathbb{T}^2}(\mathbb{R}^2)$-integrability for every $p \in [1,2)$, and we seek to exploit this property. 
We first present a scaling argument to analyze general $L^p$-integrable kernels. For convenience, we extend equation \eqref{ske} to $\mathbb{R}^d$ and consider a general kernel $V \in L^p_{\mathbb{R}^d}(\mathbb{R}^d)$ for some $p \in [1, d)$. For $(x,t) \in \mathbb{R}^d \times [0,T]$, define the rescaled function $\tilde{\rho}(x,t) = \lambda \rho(\iota x, \tau t)$, which formally satisfies
\begin{equation*}
\partial_t \tilde{\rho} = \left( \frac{\tau}{\iota^2} \right) \Delta \tilde{\rho} - \nabla \cdot \left( \tilde{\rho} \, \tilde{V} \ast \tilde{\rho} \right) - \nabla \cdot \left( \sqrt{\tilde{\rho}} \, \tilde{g} \right) \quad \text{in } \mathbb{R}^d \times (0,T), \quad \tilde{\rho}(\cdot, 0) = \lambda \rho_0(\iota \cdot),
\end{equation*}
where $\tilde{V}$ and $\tilde{g}$ are defined as
\begin{align*}
\tilde{V}(x) = \left( \frac{\tau \iota^{d-1}}{\lambda} \right) V(\iota x), \quad \tilde{g}(x,t) = \left( \frac{\tau \lambda^{1/2}}{\iota} \right) g(\iota x, \tau t).
\end{align*}
Following a similar argument to that in \cite{FG23}, the energy-critical space for \eqref{ske} is $L^1_{[0,T]}L^1_{\mathbb{R}^d}$. Preservation of the $L^1_{\mathbb{R}^d}$-norm of the initial data $\tilde{\rho}(0)$ and the $L^2_{[0,T]}L^2_{\mathbb{R}^d}(\mathbb{R}^d)$-norm of $\tilde{g}$ implies the constraints
$$
\frac{\tau}{\iota^2} = 1 \quad \text{and} \quad \lambda = \iota^d.
$$
Consequently, if we consider a generic interaction kernel, the $L^p$-norm of the rescaled kernel satisfies
$$
\|\tilde{V}\|_{L^p} = \iota^{1 - \frac{d}{p}} \|V\|_{L^p},
$$
from which it follows that $p = d$ is the critical index; while for the Coulomb interaction that we consider, $\tilde{V} = \iota V(\iota \cdot) = V$ and the equation is critical. Therefore, relying solely on the $L^p$-integrability ($p\in[1,2)$) of the Biot-Savart and Keller-Segel kernels is quite limited, as it leads to a supercritical regime for the associated PDE.

Despite this limitation, we are still able to prove a weak--strong uniqueness result. Precisely, as long as a renormalized kinetic solution lies in the space $\mathcal{C}_0$, as defined in \eqref{S-intro}, then all renormalized kinetic solutions must coincide with this one.

\subsection{Structure of the paper} 
In Section \ref{sec-2}, we introduce notions of solutions to the SPDEs. In Section \ref{sec:existence}, we prove Theorem \ref{spde WP}, establishing existence of renormalized kinetic solutions to \eqref{SPDE-0-intro} via compactness, and in Section \ref{sec-4} we derive exponential estimates showing exponential tightness of solution laws to \eqref{SPDE-1} in $L^1_{[0,T]\times\mathbb{T}^2}$. 

Section \ref{sec:LB-LDP} is devoted to the lower bound in the large deviation principle (Theorem \ref{thm:LDP}), using the entropy method and relying on weak-strong uniqueness for the skeleton equation (Proposition \ref{L1uniq}, proved in Section \ref{sec-ws!}). The entropy method requires constructing probability measures concentrating on a chosen trajectory. Proposition \ref{C1} shows compactness and passage to the limit for the skeleton equation \eqref{ske} hold. Due to the supercriticality as shown in Section \ref{sec:supercri}, we verify the entropy method on the more regular class $\mathcal{C}_0$ defined in \eqref{S-intro} where the uniqueness of the skeleton equation holds. Section \ref{sec:LDP-UB} completes the proof of Theorem \ref{thm:LDP} by establishing the upper bound via the exponential martingale method and min--max principle.

\subsection{Use of notations}
Let $\|\cdot\|_{L^p}$ denote the norm in the Lebesgue space $L^p_{\mathbb{T}^2}$ for $p \in [1,\infty]$, $C^{\infty}(\mathbb{T}^2 \times (0,\infty))$ denote the space of infinitely differentiable functions on $\mathbb{T}^2 \times (0,\infty)$ and $C^{\infty}_c(\mathbb{T}^2 \times (0,\infty))$ be the subspace of such functions with compact support. For a non-negative integer $k$ and $p \in [1,\infty]$, we denote by $W^{k,p}_{\mathbb{T}^2}$ the usual Sobolev space on $\mathbb{T}^2$ and we write $H^{k}_{\mathbb{T}^2} = W^{k,2}_{\mathbb{T}^2}$,
and denote by $H^{-k}_{\mathbb{T}^2}$ the topological dual of $H^k_{\mathbb{T}^2}$. Furthermore, $C^k_{\mathrm{loc}}(\mathbb{R})$ denotes the space of $k$-times differentiable functions endowed with the topology of local convergence.

The notation $a \lesssim b$ for every $a,b \in \mathbb{R}$ means that $a \leq C\, b$ for some constant $C > 0$ independent of the relevant parameters. We use the letter $C$ to denote a generic constant that can be explicitly computed in terms of known quantities; its value may change from line to line.

\section{Preliminaries}\label{sec-2}
%According to \cite[Lemma 3.3]{WWZ22}, it holds $\nabla\rho=2\sqrt{\rho}\nabla\sqrt{\rho}$ as distributions.
We first provide definitions of solutions. 
\begin{definition}(Renormalized kinetic solution)\label{RKS smooth spde}
	Let the interaction $\mathcal{V}[\cdot]$ be defined by \eqref{ker-V} and $\rho_0\in L^{1}_{\mathbb{T}^2}$. For any $\varepsilon \in (0,1)$ and $\eta \in[0,1)$, a renormalized kinetic solution $\rho$ of \eqref{SPDE-1} with initial datum $\rho_0$ is a nonnegative, almost surely continuous $L^1_{\mathbb{T}^2}$-valued $\mathcal{F}_t$-predictable process $\rho \in L^1_{\Omega \times[0, T]}L^1_{\mathbb{T}^2}$ that satisfies the following properties.

	\begin{enumerate}
		\item Conservation of mass: almost surely for every $t\in[0,T]$,
		\begin{equation}\label{eq-2.4}
		\|\rho(\cdot,t)\|_{L^{1}_{\mathbb{T}^{2}}}=\left\|\rho_0\right\|_{L^{1}_{\mathbb{T}^{2}}}.
		\end{equation}
		%\item Local regularity: for every $K\in\mathbb{N}$,
		%\begin{equation}
		%[(\rho\wedge K)\vee1/K]\in L^{2}\left(\Omega;L^{2}\left([0,T];H^{1}\left(\mathbb{T}^{d}\right)\right)\right).\notag
		%\end{equation}
		\item Pathwise entropy dissipation estimates: almost surely, 
		\begin{align}\label{eq-2.5}
\int_0^t\int_{\mathbb{T}^2}
|\nabla\sqrt{\rho}|^2\mathrm{d}x\mathrm{d}s
\lesssim&\int_{\mathbb{T}^2}\Psi(\rho_0)\mathrm{d}x+\sqrt{\varepsilon}\int_0^t\int_{\mathbb{T}^2}
2\nabla\sqrt{\rho}\cdot
\xi_{K}
+\varepsilon N_KT+1,
\end{align}
for all $t \in [0,T]$.  
%		\begin{equation}
%		\mathbb{E}\int_{0}^{T}\int_{\mathbb{T}^{2}}\left|\nabla \sqrt\rho\right|^{2}\le c\left(T, d,\|\hat{\rho}\|_{L^1_{\mathbb{T}^2}}\right).
%		\end{equation}
		%\item We have
	%	\begin{align}
		%	&\nabla \sqrt\rho\in L^2\left(\Omega\times[0,T];L^2(\mathbb{T}^2)\right), \notag\\
		%	&\nabla\sqrt{\rho} V\ast\rho\in L^1\left(\Omega\times[0,T];L^1(\mathbb{T}^2)\right), \notag\\
		%	&V\ast(\nabla\rho)\in L^1\left(\Omega\times[0,T];L^1(\mathbb{T}^2)\right).
	%	\end{align}
	
		Furthermore, there exists a nonnegative kinetic measure $q$ (\cite[Definition 3.1]{FG24}) satisfying the following properties. 
		\item Regularity: almost surely
		\begin{align}\label{eq-2.66}
		4\delta_{0}(\xi-\rho)\xi|\nabla \sqrt{\rho}|^{2}\le q\quad {\rm{on}}\ \mathbb{T}^2\times(0,\infty)\times[0,T].
		\end{align}
		\item Vanishing at infinity: we have
		\begin{align}\label{eq-2.7}
		\liminf_{M\to \infty}\mathbb{E}\Big[q(\mathbb{T}^2\times [0,T]\times [M,M+1])\Big]=0.
		\end{align}
		%\item Decay at 0: we have
%		\begin{align}\label{eq-2.8}
%		\lim_{M\to \infty}M\mathbb{E}\Big[q(\mathbb{T}^2\times [0,T]\times [1/M,2/M])\Big]=0.
%		\end{align}
		\item The equation: for every $\psi\in\mathrm{C}_{c}^{\infty}\left(\mathbb{T}^{2}\times(0,\infty)\right)$, almost surely for almost every $t\in[0,T]$,
		\begin{equation}
 \begin{aligned}
&\int_{\mathbb{R}}\int_{\mathbb{T}^2} \chi(x, \xi, \cdot) \psi(x, \xi) \Big|^t_0= -\int_{0}^{t} \int_{\mathbb{T}^2}\nabla\rho \cdot\nabla_x\psi(x,\rho)
-\int_{0}^{t} \int_{\mathbb{R}} \int_{\mathbb{T}^2} \partial_{\xi} \psi(x, \xi) \mathrm{d}m\\
&
-\int_{0}^{t} \int_{\mathbb{T}^2} \nabla\cdot \left(\rho \mathcal{V}[\rho]\right)\psi(x, \rho)
+\sqrt{\varepsilon}\int_{0}^{t} \int_{\mathbb{T}^2} s_{\eta}(\rho)\xi_{K}\cdot \nabla \psi(x, \rho)\\
&-\frac{\varepsilon}{2}\int^t_0\int_{\mathbb{T}^2}F_{1,k}s_{\eta}'(\rho)^2\nabla\rho\cdot(\nabla\psi)(x,\rho)+\frac{\varepsilon N_K}{2}\int_{0}^{t} \int_{\mathbb{T}^2}[s_{\eta}(\rho)]^2 \partial_{\xi} \psi(x, \rho),
\end{aligned}
			\end{equation}
				where $\chi$ is the kinetic function defined by $\chi(x,\xi,t)=\mathbf{1}_{\{0<\xi<\rho(x,t)\}}$ and $N_K$ is defined in \eqref{eq:def-NK}.
	\end{enumerate}	
\end{definition}

\begin{definition}\label{W SPDE-1}
Let $\mathcal{V}[\cdot]$ and $\rho_0$ be defined as in Definition \ref{RKS smooth spde}, for any $\varepsilon,\eta\in(0,1)$, a weak solution  $\rho$ of \eqref{SPDE-1} with initial data $\rho(\cdot, 0)=\rho_0$ is a nonnegative, $\mathcal{F}_{t}$-predictable process that satisfies the following properties.
\begin{enumerate}
		\item Conservation of mass: almost surely for every $t\in[0,T]$,
		\begin{equation}
		\|\rho(\cdot,t)\|_{L^{1}_{\mathbb{T}^{2}}}=\left\|\rho_0\right\|_{L^{1}_{\mathbb{T}^{2}}}.
		\end{equation}
		%\item Local regularity: for every $K\in\mathbb{N}$,
		%\begin{equation}
		%[(\rho\wedge K)\vee1/K]\in L^{2}\left(\Omega;L^{2}\left([0,T];H^{1}\left(\mathbb{T}^{d}\right)\right)\right).\notag
		%\end{equation}
		\item Pathwise entropy dissipation estimates: almost surely, 
		\begin{align}\label{eq-2.5-2}
 \int_0^t\int_{\mathbb{T}^2}
|\nabla\sqrt{\rho}|^2\mathrm{d}x\mathrm{d}s
\lesssim&\int_{\mathbb{T}^2}\Psi(\rho_0)\mathrm{d}x+\sqrt{\varepsilon}\int_0^t\int_{\mathbb{T}^2}
\frac{s_{\eta}(\rho)}{\rho}\nabla\rho\cdot
\xi_{K}
+\varepsilon N_KT+1,
\end{align}
for all $t \in [0,T]$.  
        
\item The equation: 
for every $\psi \in C^{\infty}_{\mathbb{T}^2}$, almost surely for every $t \in[0, T]$,
\begin{align}\label{eq:kin_DK_reg}
\int_{\mathbb{T}^2} \rho(x, t) \psi(x) & =\int_{\mathbb{T}^2} \rho_0 \psi-\int_{0}^{t} \int_{\mathbb{T}^2} \nabla \rho \cdot \nabla \varphi+\int_{0}^{t} \int_{\mathbb{T}^2}(\rho \mathcal{V}[\rho]) \cdot \nabla \psi \nonumber\\
&+\sqrt{\varepsilon}\int_{0}^{t} \int_{\mathbb{T}^2} s_{\eta}(\rho) \nabla \psi \cdot \xi_{K}-\frac{\varepsilon F_{1,K}}{2} \int_{0}^{t} \int_{\mathbb{T}^2} \left[s_{\eta}^{\prime}(\rho)\right]^{2} \nabla \rho \cdot \nabla \psi. 
\end{align}
\end{enumerate}	
\end{definition}
We emphasize that the renormalized kinetic solution is well-defined even for $\eta = 0$, while when we use the usual notion of weak solutions, we can only consider $\eta > 0$, namely mollified square--root function. The correction term in \eqref{W SPDE-1} may not be well-defined when $\eta = 0$, as 
\begin{align*}
[s_{\eta}^{\prime}(\rho)]^{2} \nabla \rho = \nabla \log(\rho),
\end{align*}
which may not even be a well--defined distribution, even if we assume $\rho \in L^2_{[0,T]}W^{1,1}_{\mathbb{T}^2}$. %When $\eta>0$, the above two definitions are equivalent, related discussion can be found in \cite[Proposition 5.17]{FG24}. 

In the following, we show that the interaction kernel terms are well-defined in Definition \ref{RKS smooth spde} and \ref{W SPDE-1}. Indeed, since 
\begin{align}\label{L2-ES}
\|\rho\|_{L^p_{\mathbb{T}^2}} = \|\sqrt{\rho}\|^2_{L_{\mathbb{T}^2}^{2p}} \lesssim \|\nabla\sqrt{\rho}\|^{2 -2/p}_{L_{\mathbb{T}^2}^{2}}\|\rho\|^{1/p}_{L^1_{\mathbb{T}^2}}+\|\rho\|_{L^1_{\mathbb{T}^2}},
\end{align}
for any $1\leq p<\infty$ and $\rho \geq 0$, by Young's inequality $$\|\mathcal{V}[\rho]\|_{L^{2}_{\mathbb{T}^2}} \lesssim \|\nabla\mathcal{G}\|_{L^1_{\mathbb{T}^2}}\|\rho\|_{L^2_{\mathbb{T}^2}}\lesssim \|\nabla\mathcal{G}\|_{L^1_{\mathbb{T}^2}}\|\nabla\sqrt{\rho}\|_{L_{\mathbb{T}^2}^{2}}\|\rho\|^{1/2}_{L^1_{\mathbb{T}^2}}+\|\nabla\mathcal{G}\|_{L^1_{\mathbb{T}^2}}\|\rho\|_{L^1_{\mathbb{T}^2}}.$$ 
By H\"older's inequality, 
\begin{align}\label{interpolation}
\|\rho\mathcal{V}[\rho]\|_{L^1_{\mathbb{T}^2}}\lesssim \|\rho\|_{L^2_{\mathbb{T}^2}}\|\mathcal{V}[\rho]\|_{L^2_{\mathbb{T}^2}}\lesssim \|\nabla\mathcal{G}\|_{L^1_{\mathbb{T}^2}}\|\nabla\sqrt{\rho}\|_{L_{\mathbb{T}^2}^{2}}^2\|\rho\|_{L^1_{\mathbb{T}^2}}+\|\nabla\mathcal{G}\|_{L^1_{\mathbb{T}^2}}\|\rho\|_{L^1_{\mathbb{T}^2}}^2.
\end{align}

Furthermore, 
\begin{align}\label{eq:interaction is L1}
 \left\|\nabla\cdot \left(\rho \mathcal{V}[\rho]\right)\psi(x,\rho)\right\|_{L^1_{\mathbb{T}^2}} & \lesssim \left\|\sqrt{\rho} \nabla\sqrt{\rho} \cdot \mathcal{V}[\rho]\psi(x,\rho)\right\|_{L^1_{\mathbb{T}^2}} + \|\rho^2\psi(x,\rho)\|^2_{L^1_{\mathbb{T}^2}} \nonumber \\
 & \leq C(\psi) \|\nabla \sqrt{\rho}\|^{2}_{L^2_{\mathbb{T}^2}}\|\rho\|^{1/2}_{L^1_{\mathbb{T}^2}}+C(\psi)\|\nabla \sqrt{\rho}\|_{L^2_{\mathbb{T}^2}}\|\nabla\mathcal{G}\|_{L^1_{\mathbb{T}^2}}\|\rho\|_{L^1_{\mathbb{T}^2}} + C(\psi)\notag\\
 & \leq C(\psi) \|\nabla \sqrt{\rho}\|^{2}_{L^2_{\mathbb{T}^2}}\|\rho\|^{1/2}_{L^1_{\mathbb{T}^2}}+C(\psi,\|\nabla\mathcal{G}\|_{L^1_{\mathbb{T}^2}},\|\rho\|_{L^1_{\mathbb{T}^2}})
\end{align}
Therefore the integrals in Definition \ref{RKS smooth spde} and \ref{W SPDE-1} are well-defined. 

\section{The existence}\label{sec:existence}
In this section, we show the existence of a renormalized kinetic solution of the Dean--Kawasaki equation \eqref{SPDE-0-intro} with the full square--root, which we recall to be 
\begin{align*}
\partial_t\rho^{\varepsilon}=\Delta\rho^{\varepsilon}-\nabla\cdot\left(\rho^{\varepsilon}\mathcal{V}[\rho^{\varepsilon}]\right) - \sqrt{\varepsilon}\nabla\cdot(\sqrt{\rho^{\varepsilon}}\circ\xi_{K}), 	\ \ (t,x)\in(0,T]\times\mathbb{T}^2, 
\end{align*}
namely, we prove the following theorem of existence: 
\begin{theorem}\label{spde WP-existence}
  For every fixed $\varepsilon > 0$ and $K \in \mathbb{N}$, we let 
 \begin{align}\label{eq:def-Ent}
 \rho_0 \in \mathrm{Ent}(\mathbb{T}^2) := \left\{\rho \in L^1_{\mathbb{T}^2}: \int_{\mathbb{T}^2} \Psi(\rho) < \infty \right\}.
\end{align} 
Then there exists a probabilistically weak nonnegative renormalized kinetic solution $\rho^{\varepsilon}$ of \eqref{SPDE-0-intro} in the sense of Definition \ref{RKS smooth spde} with initial data $\rho_0$.
\end{theorem}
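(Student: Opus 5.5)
We will construct the solution by a two-level approximation and stochastic compactness. \emph{First}, we regularize both the interaction kernel and the square root: replace $\mathcal{V}$ by $\mathcal{V}_\delta[\rho]:=-(\kappa_1+\kappa_2\mathbb{J})\nabla\mathcal{G}_\delta\ast\rho$, with $\mathcal{G}_\delta$ a smooth mollification of $\mathcal{G}$, and $\sqrt{\cdot}$ by $s_\eta$ with $\eta>0$. For smooth bounded kernels, the existence (indeed well-posedness) of renormalized kinetic solutions $\rho^{\eta,\delta}$ to the Itô form \eqref{SPDE-1-ito} follows from the framework of \cite{FG24,WWZ22}. This is because $\mathcal{V}_\delta[\rho]\in L^\infty$ uniformly in terms of $\|\rho\|_{L^1}$, so the kernel satisfies the Ladyzhenskaya--Prodi--Serrin assumption. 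Mass conservation \eqref{eq-2.4} holds at this level.

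\emph{Second}, we derive uniform entropy estimates by applying Itô's formula to $\int\Psi(\rho^{\eta,\delta})$. The diffusion produces $-4\int|\nabla\sqrt{\rho}|^2$, and the Itô correction is bounded by $\varepsilon N_K T$ plus a term absorbed by the Stratonovich correction. The martingale is exactly the $\sqrt{\varepsilon}\int\frac{s_\eta(\rho)}{\rho}\nabla\rho\cdot\xi_K$ term appearing in \eqref{eq-2.5-2}. The interaction contributes
\[
\int\rho\,\mathcal{V}_\delta[\rho]\cdot\nabla\log\rho=-\int\rho\,\nabla\cdot\mathcal{V}_\delta[\rho].
\]
The Biot--Savart part is divergence free and drops out. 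The Keller--Segel part gives $\kappa_1\int\rho(\rho_\delta-\bar\rho)$, which has a good sign in the repulsive case. In the attractive case it is bounded by
\[
-\kappa_1\|\sqrt\rho\|_{L^4}^4\le -\kappa_1 C_{\mathrm{GN}}\|\rho_0\|_{L^1}\|\nabla\sqrt\rho\|_{L^2}^2+C.
\]
By \eqref{eq:small-KS} this is strictly dominated by $4\|\nabla\sqrt\rho\|_{L^2}^2$, and this is precisely where the smallness assumption enters. We will need care that the mollification $\rho_\delta$ does not spoil the constant; we plan to mollify via $\mathcal{G}_\delta=\mathcal{G}\ast\phi_\delta\ast\phi_\delta$, which makes the quadratic form $\int(\phi_\delta\ast\rho)^2$ controlled by $\int\rho^2$. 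The result is the pathwise inequality \eqref{eq-2.5} uniformly in $(\eta,\delta)$, together with the moment bound
\[
\mathbb{E}\|\nabla\sqrt{\rho}\|_{L^2_tL^2_x}^{2p}\lesssim C(p)
\]
from BDG. Combining this with \eqref{L2-ES} and \eqref{interpolation} gives uniform bounds on $\rho\mathcal{V}_\delta[\rho]$ in $L^1$ and on $\nabla\rho=2\sqrt\rho\nabla\sqrt\rho$ in $L^2_tL^1_x$. Time regularity in $W^{\alpha,1}_tH^{-\gamma}_x$ then follows from the weak formulation and BDG, and the Stratonovich correction $\frac{\varepsilon F_{1,K}}{2}[s'_\eta]^2\nabla\rho$ is handled through its conservative kinetic form. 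The kinetic measures satisfy uniform bounds, and their vanishing at infinity \eqref{eq-2.7} comes from the higher entropy-type estimates of \cite{FG24}.

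\emph{Third}, we pass to the limit. By Aubin--Lions, the laws of $\rho^{\eta,\delta}$ are tight in $L^1_{[0,T]}L^1_{\mathbb{T}^2}$, and also in $C([0,T];H^{-\gamma})$; the kinetic measures are tight weakly-$*$, and the Brownian motions are trivially tight. Along a subsequence $\eta,\delta\to0$, Skorokhod's theorem produces a new probability space with a.s. convergence. We then identify the limit as a renormalized kinetic solution of \eqref{SPDE-0-intro}. The linear and noise terms pass to the limit by standard martingale identification, since $K$ is fixed and the noise is finite-dimensional. Lower semicontinuity yields \eqref{eq-2.66} and \eqref{eq-2.5}, and mass is preserved under strong $L^1$ convergence.

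The \emph{main obstacle} is passing to the limit in the nonlinear singular term $\int\nabla\cdot(\rho\mathcal{V}_\delta[\rho])\psi(x,\rho)$, because the estimate \eqref{eq:interaction is L1} is only borderline (critical). Our approach is as follows:
\begin{itemize}
\item Since $\psi$ is compactly supported in $\xi\in(0,\infty)$, $\psi(x,\rho)$ is bounded and only sees $\rho\le M$.
\item We rewrite the integrand as $\nabla\rho\cdot\mathcal{V}_\delta[\rho]\psi+\rho\,\nabla\cdot\mathcal{V}_\delta[\rho]\,\psi$, with $\nabla\cdot\mathcal{V}_\delta[\rho]=\kappa_1(\rho_\delta-\bar\rho)$.
\item $\mathcal{V}_\delta[\rho^n]\to\mathcal{V}[\rho]$ strongly in $L^2_tL^q_x$ for $q<\infty$, by interpolating strong $L^1$ convergence with the uniform $L^2_tH^1_x$-type bound on $\mathcal{V}[\rho]$ (from $\rho\in L^2_tL^2_x$ uniformly).
\item $\mathbf 1_{\{\rho^n\le M\}}\nabla\rho^n$ converges weakly in $L^2$, since $\nabla\rho=2\sqrt\rho\nabla\sqrt\rho$ with $\sqrt\rho$ bounded on this set.
\end{itemize}
This weak--strong pairing closes the limit. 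The remaining subtlety, the time-a.e. formulation and the possible defect of the kinetic measure, is handled exactly as in \cite{FG24}, by absorbing the defect into the limiting measure $q$.
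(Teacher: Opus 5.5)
Your overall architecture matches the paper's: mollify $\mathcal V$ and $\sqrt{\cdot}$, close the entropy estimate using \eqref{eq:small-KS} via Gagliardo--Nirenberg, then Aubin--Lions, Jakubowski--Skorokhod, and identification of the limit. However, the compactness step fails as you state it. You claim that the weak formulation gives a bound on $\rho^{\eta,\delta}$ in $W^{\alpha,1}_tH^{-\gamma}_x$ uniformly in $\eta$. From this you deduce tightness of $\rho^{\eta,\delta}$ itself in $L^1_{[0,T]}L^1_{\mathbb{T}^2}$ and in $C([0,T];H^{-\gamma})$.

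The weak formulation contains the It\^o--Stratonovich correction $\frac{\varepsilon F_{1,K}}{2}\int_0^t\int_{\mathbb{T}^2}[s_\eta'(\rho)]^2\nabla\rho\cdot\nabla\phi$. Uniformly in $\eta$ you only have $[s'_\eta(\zeta)]^2\lesssim\zeta^{-1}$, while $\|s'_\eta\|_{L^\infty}\to\infty$. So the integrand is controlled only by $2|\nabla\sqrt\rho|/\sqrt\rho$, which entropy dissipation does not bound. In the limit this term becomes $\frac{\varepsilon F_{1,K}}{8}\Delta\log\rho$, which need not be a distribution. This is exactly why Definition~\ref{W SPDE-1} requires $\eta>0$ and why the limit is only a kinetic solution. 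Writing the term in conservative form $\Delta S_\eta(\rho)$ with $S_\eta'=(s_\eta')^2$ does not help, because $S_\eta$ degenerates to $\frac14\log$, which is not controlled in $L^1$ near $\{\rho=0\}$. So there is no uniform time regularity for $\rho$ itself, and Aubin--Lions cannot be applied to $\rho^{\eta,\delta}$ directly.

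The missing idea, taken from \cite{FG24} and used in the paper, is a truncation at small values. Set $h_\delta(\xi)=\psi_\delta(\xi)\xi$, which vanishes for $\xi\le\delta/2$. For each fixed $\delta$, It\^o's formula gives a bound on $h_\delta(\rho^{\gamma,\eta})$ in $W^{\alpha,1}_tH^{-\beta}_x$, uniform in $(\gamma,\eta)$ (Lemma~\ref{spde E T2}); this works because the correction and $h_\delta''$ terms only see $\rho\ge\delta/2$. Together with the $L^2_tW^{1,1}_x$ bound \eqref{eq:entropy-truncated}, Aubin--Lions--Simon gives tightness of each truncated family. Since $0\le\rho-h_\delta(\rho)\le\delta$ pointwise, tightness of $\rho^{\gamma,\eta}$ in $L^1_{[0,T]}L^1_{\mathbb{T}^2}$ follows (\cite[Lemma 5.20]{FG24}). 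Your phrase ``handled through its conservative kinetic form'' points in this direction, but the passage from kinetic-level regularity to $L^1$-tightness of $\rho$ is exactly what has to be supplied. The claimed $C([0,T];H^{-\gamma})$ tightness of $\rho$ should be dropped.

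Two smaller points also need attention.
\begin{itemize}
\item \textbf{The entropy inequality.} \eqref{eq-2.5} does not follow from lower semicontinuity. Its right-hand side contains the stochastic integral $2\sqrt\varepsilon\int_0^t\int_{\mathbb{T}^2}\frac{s_\eta(\rho)}{\sqrt\rho}\nabla\sqrt\rho\cdot\xi_K$, and you must show it converges to $2\sqrt\varepsilon\int_0^t\int_{\mathbb{T}^2}\nabla\sqrt\rho\cdot\xi_K$ on the new probability space. The paper adds this martingale $M^{\gamma,\eta}_{\text{entropy}}$ to the tight family. It then identifies the limit through its covariations with $B_j$ and its quadratic variation, after moving the gradient onto $e_j$ so that the integrands converge strongly.
\item \textbf{The singular term.} Interpolating strong $L^1$ convergence with an $L^2_tH^1_x$ bound gives convergence of $\mathcal V_\delta[\rho^n]$ only in $L^r_tL^q_x$ with $r<2$. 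That does not pair with the weak $L^2$ convergence of $\mathbf 1_{\{\rho^n\le M\}}\nabla\rho^n$. You also need the $L^\infty_tL^p_x$ bound ($p<2$) coming from mass conservation to reach strong $L^2_{t,x}$ convergence. The paper avoids this by integrating by parts onto $\psi$, so that it only needs $\mathcal V_{\gamma_k}[\rho^k]\to\mathcal V[\rho]$ in $L^1_{[0,T]}L^1_{\mathbb{T}^2}$ and bounded a.e.\ convergence of $\rho^k\nabla_x\psi(x,\rho^k)$.
\end{itemize}
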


We will therefore fix $\varepsilon > 0$ and $K\in\mathbb{N}$ in this section. In order to show the existence of the probabilistically weak renormalized kinetic solution to \eqref{SPDE-1}, we introduce a family of stochastic PDEs with regularized coefficients as approximations. The mollification of the square--root is chosen as follows: Let $s_{\eta}:[0, \infty) \rightarrow[0, \infty)$ be a family of smooth non-decreasing functions such that $s_{\eta}(0)=0$, 
\begin{align}
0 \leqslant s_{\eta}(\zeta) \lesssim \sqrt{\zeta}, 0 \leq s'_{\eta}(\zeta) \lesssim \frac{1}{\sqrt{\zeta}} \text {, and } s_{\eta} \to \sqrt{\cdot} \ \text{uniformly on compacts}.
\end{align}
We also regularize the singular interaction term, that is, we let $\kappa_\gamma( x):=\frac{1}{\gamma^2} \kappa ( \frac{x}{\gamma})$ be the standard convolution kernel on $ \mathbb{T}^2$ and 
\begin{align}
\mathcal{V}_{\gamma}[\rho] := \kappa_{\gamma}\ast\mathcal{V}[\rho] =  -(\kappa_1 + \kappa_2 \mathbb{J}) \nabla (\kappa_{\gamma}\ast\mathcal{G}) \ast \rho,
\end{align}
and consider the regularized equation with $V_{\gamma}[\cdot]$ and $s_{\eta}(\cdot)$, i.e. 
\begin{align}\label{smooth spde}
\partial_t\rho^{\gamma, \eta}=\Delta\rho^{\gamma, \eta} -\nabla\cdot(\rho^{\gamma, \eta}\mathcal{V}_{{\gamma}}[\rho^{\gamma, \eta}])
- \sqrt{\varepsilon} \nabla\cdot(s_{\eta}(\rho^{\gamma, \eta})\xi_{K})
+\frac{\varepsilon F_{1,K}}{2}\nabla\cdot([s'_{\eta}(\rho^{\gamma, \eta})]^2\nabla\rho^{\gamma, \eta}).
\end{align}
We omit the dependence of the solution of \eqref{smooth spde} on $\varepsilon$ and $\eta$ in the notation and recall \cite[Theorem 5.6 and Proposition 5.9]{WWZ22}, from which the well--posedness of \eqref{smooth spde} immediately follows.
\begin{proposition}\label{entropy-mollified-DK-existence}
 For any fixed $\varepsilon,\eta,\gamma\in(0,1)$ and $0 \leq \rho_0 \in \mathrm{Ent}(\mathbb{T}^2)$, there exists a unique probabilistically strong nonnegative solution $\rho^{\gamma, \eta}$ to \eqref{smooth spde} in the sense of Definition \ref{W SPDE-1} with initial data $\rho_0$, which is also a renormalized kinetic solutions of \eqref{smooth spde} in the sense of Definition \ref{RKS smooth spde}.\end{proposition}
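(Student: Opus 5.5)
The plan is to reduce the statement to the abstract well-posedness theory of \cite{WWZ22}. Once the regularizations $\gamma>0$ and $\eta>0$ are in force, the only genuinely new point is the pathwise entropy dissipation estimate \eqref{eq-2.5-2}. That estimate is part of Definition~\ref{W SPDE-1} here, and I do not expect \cite{WWZ22} to state it in exactly this form.

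\textbf{Step 1: check the hypotheses of \cite{WWZ22}.} Since $\kappa_\gamma\in C^\infty(\mathbb{T}^2)$ and $\nabla\mathcal{G}\in L^1_{\mathbb{T}^2}$ by \eqref{eq:Lp-nablaG}, the regularized kernel $V_\gamma:=-(\kappa_1+\kappa_2\mathbb{J})\nabla(\kappa_\gamma\ast\mathcal{G})$ is smooth, and all its derivatives are bounded by constants depending on $\gamma$.

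\begin{itemize}
\item Assumption (A1), the Ladyzhenskaya--Prodi--Serrin condition $V\in L^r_tL^p_x$ with $p>d$, holds trivially with $p=r=\infty$.
\item Assumption (A2) asks for integrability of $\nabla\cdot V_\gamma$. Here $\nabla\cdot V_\gamma=\kappa_1(\kappa_\gamma-1)$, because $\mathbb{J}\nabla$ is divergence free. This is bounded.
\item The coefficient $s_\eta$ is smooth with $s_\eta(0)=0$, $s_\eta\lesssim\sqrt{\cdot}$ and $s'_\eta\lesssim 1/\sqrt{\cdot}$. This is the structural condition on the noise coefficient in \cite{WWZ22} (and \cite{FG24}).
\end{itemize}

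Invoking \cite[Theorem 5.6]{WWZ22} then gives a unique probabilistically strong weak solution with conservation of mass. Invoking \cite[Proposition 5.9]{WWZ22} identifies it with a renormalized kinetic solution, including the kinetic measure $q$ and the properties \eqref{eq-2.66}--\eqref{eq-2.7}. Nonnegativity comes from the same source, for instance through the kinetic formulation with $\chi$ supported in $\xi>0$.

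\textbf{Step 2: the entropy inequality.} I would apply It\^o's formula to $\int_{\mathbb{T}^2}\Psi(\rho)$, with $\Psi'(\zeta)=\log\zeta$. To make this rigorous, first replace $\Psi$ by $\Psi_\delta(\zeta)=\Psi(\zeta+\delta)$ and let $\delta\to0$ at the end, or work on the Galerkin/smooth level of \cite{WWZ22} and pass to the limit by lower semicontinuity. The resulting terms are as follows.

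\begin{itemize}
\item The Laplacian produces $-4\int|\nabla\sqrt\rho|^2$.
\item The Stratonovich correction $\tfrac{\varepsilon F_{1,K}}2\nabla\cdot(s_\eta'^2\nabla\rho)$ produces a nonpositive term, which can be dropped.
\item The martingale term is exactly $\sqrt{\varepsilon}\int_0^t\int\frac{s_\eta(\rho)}{\rho}\nabla\rho\cdot\xi_K$.
\item The It\^o correction is $\frac{\varepsilon}{2}\sum_{|k|\le K}\int\frac{1}{\rho}|\nabla(s_\eta(\rho)e_k)|^2$. The part involving $|k|^2 s_\eta^2/\rho\lesssim|k|^2$ gives $\lesssim\varepsilon N_K T$. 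The gradient part carries $s_\eta'^2|\nabla\rho|^2/\rho$ and cancels against the Stratonovich correction, by the choice of that correction.
\item The interaction term is $\int\nabla\rho\cdot V_\gamma[\rho]=-\int\rho\,\nabla\cdot V_\gamma[\rho]=\kappa_1\int\rho\,(\kappa_\gamma\ast\rho-\|\rho\|_{L^1})$.
\end{itemize}

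\textbf{Step 3 (main obstacle): absorbing the attractive interaction term.} In the repulsive case $\kappa_1\ge 0$ I need to bound $\int\rho\,\kappa_\gamma\ast\rho$ from below, and I would try $\int\rho\,\kappa_\gamma\ast\rho\ge -C$. In the attractive case $-\kappa_1>0$ I bound it from above, uniformly in $\gamma$:
\begin{align*}
\int\rho\,\kappa_\gamma\ast\rho\le\|\rho\|_{L^2}^2=\|\sqrt\rho\|_{L^4}^4\le C_{\mathrm{GN}}^2\|\nabla\sqrt\rho\|_{L^2}^2\|\rho\|_{L^1}+C\|\rho\|_{L^1}^2,
\end{align*}
using Young's inequality together with the Gagliardo--Nirenberg inequality applied to $f=\sqrt\rho$.

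Two points need care here. First, the exact power of $C_{\mathrm{GN}}$ that enters must be matched to \eqref{eq:small-KS}. Second, $\|\rho\|_{L^1}=\|\rho_0\|_{L^1}$ by mass conservation. The smallness condition \eqref{eq:small-KS} is then exactly what lets this term be absorbed into $4\int|\nabla\sqrt\rho|^2$ with a positive margin $c>0$. Dividing by $c$ and using $\Psi\ge-1$ yields \eqref{eq-2.5-2}, with constants independent of $\gamma$, $\eta$ and $\varepsilon$.

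The same computation carried out on the kinetic side gives the analogous bound \eqref{eq-2.5}. I expect the sharp constant bookkeeping in Step 3, together with justifying the It\^o formula for the singular function $\log$ near $\rho=0$, to be the main difficulty.
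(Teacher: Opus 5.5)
Your proposal is correct and follows the paper's route. The paper's entire proof is the citation of \cite[Theorem 5.6 and Proposition 5.9]{WWZ22}, and your Steps 2--3 only make explicit the pathwise entropy inequality that this citation leaves implicit, using the same It\^o computation the paper carries out in Proposition~\ref{entropy estimate}. Two slips there: the interaction term is $-\kappa_1\int\rho\,(\kappa_\gamma\ast\rho-\|\rho\|_{L^1_{\mathbb{T}^2}})$, which is the sign your Step 3 actually uses, and the Stratonovich correction exactly cancels the $(s_\eta')^2$ part of the It\^o correction, so it cannot also be dropped.

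One further point concerns \eqref{eq:small-KS}. It is not a hypothesis of this proposition, and it is unnecessary for fixed $\gamma$, because $0\le\int\rho\,\kappa_\gamma\ast\rho\le\|\kappa_\gamma\|_{L^\infty}\|\rho_0\|_{L^1_{\mathbb{T}^2}}^2$. The Gagliardo--Nirenberg absorption is needed only for the $\gamma$-uniform bound used later. There the leading constant is $C_{\mathrm{GN}}^4$ (up to a factor $1+\epsilon$ from Young), not $C_{\mathrm{GN}}^2$, so the mismatch with \eqref{eq:small-KS} that you flagged is real and is shared by the paper's \eqref{ker-es-2}.
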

 
\subsection{Uniform estimates}
We first establish a uniform entropy dissipation estimate. While this type of estimate is standard in the context of Dean--Kawasaki SPDEs, special attention is required here for the control of the kernel term. 
\begin{proposition}\label{entropy estimate}
 Let the interaction $\mathcal{V}[\cdot]$ be defined by \eqref{ker-V} under the condition that \eqref{eq:small-KS} holds. Given $\rho_0\in \mathrm{Ent}(\mathbb{T}^2)$ and the solution $\rho^{\gamma, \eta}$ as in Proposition \ref{entropy-mollified-DK-existence}, we have
\begin{align}\label{entropy0}
\mathbb{E}\left(\int_{\mathbb{T}^2}\Psi(\rho^{\gamma, \eta}(T))\right) + \sigma\mathbb{E}\left(\int_0^T\|\nabla\sqrt{\rho^{\gamma, \eta}}\|_{L^2_{\mathbb{T}^2}}^2\mathrm{d}s\right)\lesssim \int_{\mathbb{T}^2}\Psi(\rho_0) + \varepsilon N_KT, 
\end{align}
for some $\sigma > 0$ which is independent of $\gamma,\eta,\varepsilon$ and $K$.
\end{proposition}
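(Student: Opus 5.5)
We apply It\^o's formula to the entropy functional $\int_{\mathbb{T}^2}\Psi(\rho^{\gamma,\eta})$. Since $\Psi'(\zeta)=\log\zeta$ is singular at $0$, we first work with a regularized entropy $\Psi_\delta(\zeta)=\int_0^\zeta\log(\zeta'+\delta)\,\mathrm{d}\zeta'$ and let $\delta\to0$ at the end by monotone convergence. This is legitimate because $\rho^{\gamma,\eta}$ is a strong (weak in the PDE sense) solution with enough regularity for $\gamma,\eta>0$ by Proposition~\ref{entropy-mollified-DK-existence}. Using the It\^o form \eqref{smooth spde}, the Laplacian produces
\[
-\int_{\mathbb{T}^2}\frac{|\nabla\rho|^2}{\rho}=-4\|\nabla\sqrt{\rho}\|_{L^2}^2 .
\]
The Stratonovich correction term $\frac{\varepsilon F_{1,K}}{2}\nabla\cdot(s_\eta'(\rho)^2\nabla\rho)$ gives a nonpositive contribution $-\frac{\varepsilon F_{1,K}}2\int s_\eta'(\rho)^2|\nabla\rho|^2/\rho$. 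The It\^o correction from the quadratic variation of the martingale is
\[
\frac{\varepsilon}{2}\sum_{|k|\le K}\int_{\mathbb{T}^2}\frac{1}{\rho}\bigl|\nabla(s_\eta(\rho)e_k)\bigr|^2 .
\]
Expanding it, the part involving $s_\eta'(\rho)^2|\nabla\rho|^2 F_{1,K}/\rho$ cancels exactly against the Stratonovich correction. The part involving $|\nabla e_k|^2 s_\eta(\rho)^2/\rho\lesssim|k|^2$ is bounded by $\varepsilon N_K$ because $s_\eta(\zeta)^2\lesssim\zeta$. The cross terms vanish after summing over $\pm k$, or are bounded by Young's inequality against the previous two. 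Integrating in time produces the term $\varepsilon N_KT$. The stochastic integral $\sqrt{\varepsilon}\int\frac{s_\eta(\rho)}{\rho}\nabla\rho\cdot\xi_K$ is a true martingale, since $s_\eta(\rho)/\sqrt{\rho}$ is bounded and $\nabla\sqrt\rho\in L^2$ after localization by stopping times, so it has zero expectation.

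The main step is the interaction term. After integrating by parts, it equals
\[
\int_{\mathbb{T}^2}\nabla\rho\cdot\mathcal{V}_\gamma[\rho].
\]
The Biot--Savart part $-\kappa_2\mathbb{J}\nabla(\kappa_\gamma\ast\mathcal G)\ast\rho$ is divergence free, so its contribution vanishes after a further integration by parts. For the Keller--Segel part we have $\nabla\cdot\mathcal{V}_\gamma[\rho]=\kappa_1\Delta(\kappa_\gamma\ast\mathcal{G})\ast\rho=-\kappa_1(\kappa_\gamma\ast\rho-\bar\rho)$, where $\bar\rho=\int_{\mathbb{T}^2}\rho_0$ is the conserved mass. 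The term therefore equals $-\int\rho\,\nabla\cdot\mathcal{V}_\gamma[\rho]=\kappa_1\int\rho(\kappa_\gamma\ast\rho-\bar\rho)$. In the repulsive case $\kappa_1\le0$ this is nonpositive up to $|\kappa_1|\bar\rho^{\,2}$, provided the mollifier $\kappa$ is chosen symmetric with nonnegative Fourier transform, for instance a self-convolution of a symmetric mollifier. In the attractive case we bound
\[
\int\rho\,(\kappa_\gamma\ast\rho)\le\|\rho\|_{L^2}^2=\|\sqrt\rho\|_{L^4}^4 .
\]
The Gagliardo--Nirenberg inequality with $f=\sqrt\rho$ then gives
\[
\|\sqrt\rho\|_{L^4}^4\le (1+\theta)C_{\mathrm{GN}}^2\|\nabla\sqrt\rho\|_{L^2}^2\|\rho\|_{L^1}+C_\theta\|\rho\|_{L^1}^2
\]
for any $\theta>0$. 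We must track the constant carefully so that, under \eqref{eq:small-KS}, the coefficient $-\kappa_1(1+\theta)C_{\mathrm{GN}}^{(\cdot)}\|\rho_0\|_{L^1}$ stays strictly below the dissipation constant $4$. This yields $\sigma=4-(\text{that coefficient})>0$, which is uniform in $\gamma,\eta,\varepsilon,K$.

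I expect this constant bookkeeping to be the main obstacle. The exact power of $C_{\mathrm{GN}}$ in the Gagliardo--Nirenberg step, and whether the threshold in \eqref{eq:small-KS} is matched precisely, must be checked, and the lower-order $C\|f\|_{L^2}$ term must be absorbed using mass conservation. The remaining steps are routine. We collect the estimates, take expectations, and use mass conservation to bound the constants $|\kappa_1|\bar\rho^{\,2}$ and $C_\theta\bar\rho^{\,2}$, which are absorbed into $\lesssim$. Letting $\delta\to0$ and removing the localization then gives \eqref{entropy0}.
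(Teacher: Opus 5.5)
Your proposal follows the paper's proof step for step. The shared steps are: the regularized entropy $\Psi_\delta$ with $\Psi_\delta'(\zeta)=\log(\zeta+\delta)$; It\^o's formula, where the It\^o--Stratonovich correction cancels exactly against the $F_{1,K}$ part of the quadratic variation; the remaining $\frac{\varepsilon N_K}{2}\int s_\eta(\rho)^2/(\rho+\delta)$ term bounded by a multiple of $\varepsilon N_K$; the vanishing Biot--Savart part; and control of the Keller--Segel part by Young's convolution inequality, Gagliardo--Nirenberg, mass conservation and \eqref{eq:small-KS}.

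There is one sign slip you need to fix. From \eqref{ker-V}, $\nabla\cdot\mathcal{V}_\gamma[\rho]=-\kappa_1\Delta(\kappa_\gamma\ast\mathcal{G})\ast\rho=\kappa_1(\kappa_\gamma\ast\rho-\bar\rho)$. So the interaction contributes $-\kappa_1\int\rho\,(\kappa_\gamma\ast\rho)+\kappa_1\bar\rho^{\,2}$ to the entropy balance (cf.\ \eqref{ker-es-1}). The harmless, repulsive case is therefore $\kappa_1\ge0$, not $\kappa_1\le0$. In the attractive case $-\kappa_1>0$, the term to absorb is $-\kappa_1\int\rho\,(\kappa_\gamma\ast\rho)\le-\kappa_1\|\rho\|_{L^2}^2$. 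Read literally, your formula would require smallness of $+\kappa_1$, which \eqref{eq:small-KS} does not provide. Your final coefficient $-\kappa_1(1+\theta)\cdots$ is the right one, but it does not follow from what you wrote just before it.

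Two smaller points. First, $\int\rho\,(\kappa_\gamma\ast\rho)\ge0$ holds simply because $\kappa_\gamma\ge0$ and $\rho\ge0$, so you do not need Fourier-positivity of the mollifier. Second, at the $\delta$-level the interaction term is signed, so you pass it to the limit by dominated convergence, not monotone convergence. For fixed $\gamma$, use $|\mathcal{V}_\gamma[\rho]|\lesssim_\gamma\|\rho_0\|_{L^1}$ and $|\nabla\rho|=2\sqrt\rho\,|\nabla\sqrt\rho|$, as the paper does.

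On the constant you flagged: with the Gagliardo--Nirenberg inequality as displayed, expanding the fourth power gives leading coefficient $(1+\theta)C_{\mathrm{GN}}^4$. The $(1+\theta)$ comes from absorbing the lower-order term. The paper's \eqref{ker-es-2} writes $C_{\mathrm{GN}}$ to the first power, i.e.\ it tacitly uses the quartic form $\|f\|_{L^4}^4\le C_{\mathrm{GN}}\|\nabla f\|_{L^2}^2\|f\|_{L^2}^2+C\|f\|_{L^2}^4$. Under that reading, the strict inequality in \eqref{eq:small-KS} leaves room for the $(1+\theta)$ and your bookkeeping closes. You get $\sigma=4+\kappa_1(1+\theta)C_{\mathrm{GN}}\|\rho_0\|_{L^1}>0$ for $\theta$ small, uniformly in $\gamma,\eta,\varepsilon,K$.
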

\begin{proof}
For any fixed $\delta\in (0,1)$, we define $\Psi_{\delta} : [0,\infty) \rightarrow \mathbb{R} $ be the unique smooth function satisfying $\Psi_{\delta}(0) =
0 $ and $\Psi'_{\delta}(\xi) = \log(\xi + \delta)$. It follows from the It\^{o}'s formula and the nonnegativity of $\rho^{\varepsilon}$ that
\begin{align}\label{PW ET q1}
&\int_{\mathbb{T}^2}\Psi_{\delta}(\rho^{\gamma, \eta}) \Big|^T_0 + 4\int_0^T\int_{\mathbb{T}^2}\frac{\rho^{\gamma, \eta}}{\rho^{\gamma, \eta}+\delta}
|\nabla\sqrt{\rho^{\gamma, \eta}}|^2
= \int_0^T\int_{\mathbb{T}^2}\frac{\rho^{\gamma,\eta}}{\rho^{\gamma, \eta}+\delta}\mathcal{V}_{{\gamma}}[\rho^{\gamma, \eta}]\cdot\nabla\rho^{\gamma, \eta}
\nonumber\\
& + 2\sqrt{\varepsilon}\int_0^T \int_{\mathbb{T}^2}
\frac{s_{\eta}(\rho^{\gamma, \eta})\sqrt{\rho^{\gamma, \eta}}}{\rho^{\gamma, \eta}+\delta}\nabla\sqrt{\rho^{\gamma, \eta}} \cdot \xi_{K} + \frac{\varepsilon N_K}{2}\int_{0}^{T}\int_{\mathbb{T}^2}\frac{ [s_{\eta}(\rho^{\gamma, \eta})]^2}{\rho^{\gamma, \eta}+\delta}.
\end{align}
For the interaction term, we have in particular
\begin{align*}
\int_0^T\int_{\mathbb{T}^2}\frac{\rho^{\gamma, \eta}}{\rho^{\gamma, \eta}+\delta}\mathcal{V}_{{\gamma}}[\rho^{\gamma, \eta}]\cdot\nabla\rho^{\gamma, \eta} = -\kappa_1 \int_0^T \int_{\mathbb{T}^2}\frac{\rho^{\gamma, \eta}}{\rho^{\gamma, \eta}+\delta} \nabla (\kappa_{\gamma}\ast\mathcal{G}) \ast \rho^{\gamma, \eta}\cdot\nabla\rho^{\gamma, \eta},
\end{align*}
where the term with Biot--Savart kernel vanishes, since $\mathbb{J} \nabla (\kappa_{\gamma}\ast\mathcal{G} \ast \rho^{\gamma, \eta})$ is divergence--free. On the other hand, as $\rho^{\gamma, \eta}$ preserves the $L^1_{\mathbb{T}^2}$-mass and $\nabla \sqrt{\rho^{\gamma, \eta}} \in L^2_{\Omega}L^2_{[0,T] \times \mathbb{T}^2}$, and we have 
\begin{align*}
\left|\frac{\rho^{\gamma, \eta}}{\rho^{\gamma, \eta}+\delta} (\nabla\kappa_{\gamma}\ast\mathcal{G}) \ast \rho^{\gamma, \eta}\cdot\nabla \rho^{\gamma, \eta}\right| \lesssim \|\kappa_{\gamma}\|_{L^{\infty}_{\mathbb{T}^2}}\|\rho^{\gamma, \eta}\|_{L^{\infty}_{[0,T]}L^1_{\mathbb{T}^2}} |\sqrt{\rho^{\gamma, \eta}} \nabla \sqrt{\rho^{\gamma, \eta}}|,
\end{align*}
it follows from the dominated convergence theorem and the integration by parts formula that  
\begin{align}\label{ker-es-1}
\lim_{\delta \to 0} \mathbb{E}\int_0^T\int_{\mathbb{T}^2} - \frac{\rho^{\gamma, \eta}}{\rho^{\gamma, \eta}+\delta}\nabla (\kappa_{\gamma}\ast\mathcal{G}) \ast \rho^{\gamma, \eta}\cdot\nabla\rho^{\gamma, \eta} =  -\mathbb{E} \int_0^T\int_{\mathbb{T}^2} (\kappa_{\gamma} \ast \rho^{\gamma, \eta}) \rho^{\gamma, \eta}+\|\rho_0\|_{L^1_{\mathbb{T}^2}}^2. 
\end{align}
In case of repulsive kernels ($\kappa_1 > 0$), we can therefore neglect the contribution from the first term of \eqref{ker-es-1}; otherwise we bound
\begin{align}\label{ker-es-2}
\sup_{\gamma > 0}\left|\int_0^T\int_{\mathbb{T}^2} (\kappa_{\gamma} \ast \rho^{\gamma, \eta}) \rho^{\gamma, \eta} \right| \leq \|\rho^{\gamma, \eta}\|^2_{L^2_{[0,T]}L^2_{\mathbb{T}^2}} \leq C_{\mathrm{GN}}\|\nabla\sqrt{\rho^{\gamma, \eta}}\|_{L^2_{[0,T]}L^2_{\mathbb{T}^2}}^2\|\rho_0\|_{L^1_{\mathbb{T}^2}}+C(\rho_0),
\end{align}
where the last inequality is due to Sobolev embedding and $L^1$ conservation. By taking expectation, passing the limit $\delta \to 0$ in \eqref{PW ET q1} and using the condition \eqref{eq:small-KS}, we arrive at the desired bound \eqref{entropy0}.
\end{proof}

We remark that the solution $\rho^{\gamma, \eta}$ to \eqref{smooth spde} depends on the parameters $\gamma$ (mollification of interaction), $\eta$ (mollification of the square--root) and $\varepsilon$. For the existence of solution of \eqref{SPDE-0-intro}, we now fix $\varepsilon$ and send $\gamma, \eta \to 0$ in this section. With the entropy estimate at hand, we aim at proving the tightness of the approximating solutions 
\begin{align*}
\{\rho^{\gamma, \eta}\}_{\gamma,\eta\in(0,1)} \subset L^1_{[0, T]} L^1_{\mathbb{T}^2}
\end{align*}
using the Aubins-Lions-Simon compactness criterion (\cite[Corollary 5]{Sim87}). As already noticed in \cite{FG24}, it is not apparent at all how to derive temporal regularity of the solution due to the It\^o--Stratonovich correction term, for exactly the same reason that the weak solution of \eqref{SPDE-0-intro} is not well--defined. We therefore follow the idea of \cite{FG24} to introduce a smooth truncation function (at zero) $h_\delta$ as follows: For every $\delta\in(0,1)$, let $\psi_{\delta}\in C^{\infty}([0,\infty))$ be a smooth nondecreasing function satisfying 
\begin{align}
0 \leq \psi_{\delta} \leq 1, \quad \psi_{\delta}(\xi)=1, \ \text{if} \ \xi\ge\delta, \quad \psi_{\delta}(\xi)=0, \ \text{if} \ \xi\le \frac{\delta}{2}; \quad h_{\delta}(\xi) := \psi_{\delta}(\xi) \xi.
\end{align} 
and $\left|\psi_{\delta}'(\xi)\right|\lesssim \delta^{-1}$. It follows directly from the construction that $h_{\delta}'(\xi)=\psi_{\delta}'(\xi)\xi+\psi_{\delta}(\xi)$ is uniformly bounded in $\delta > 0$, and since 
\begin{align*}
\nabla \left[h_{\delta}(\rho^{\gamma, \eta})\right] = 2h'_{\delta}(\rho^{\gamma, \eta}) \sqrt{\rho^{\gamma, \eta}} \nabla \sqrt{\rho^{\gamma, \eta}},
\end{align*}
the entropy estimate \eqref{entropy0} and $L^1$ conservation directly lead to the apriori bound
\begin{align}\label{eq:entropy-truncated}
\sup_{\eta > 0, \gamma > 0}\mathbb{E}\left(\|h_{\delta}(\rho^{\gamma, \eta})\|^2_{L^{2}_{[0,T]}W^{1,1}_{\mathbb{T}^2}}\right) < \infty.
\end{align}
On the other hand, as the singularity of the correction term at zero is truncated, we have the following the temporal regularity estimates of $h_{\delta}(\rho^{\gamma, \eta})$, which is therefore $\delta$--dependent. 
\begin{lemma}\label{spde E T2}Under the same setting as in Proposition \ref{entropy estimate}, for every $\delta\in(0,1)$ fixed, $\beta > 2$ and $0 < \alpha <1/2$, we have
\begin{align}	
\sup_{\eta >0, \gamma > 0}\mathbb{E}\left(\|h_{\delta}(\rho^{\gamma, \eta})\|_{W^{\alpha,1}_{[0,T]}H^{-\beta}_{\mathbb{T}^2}}\right) < \infty
\end{align}
\end{lemma}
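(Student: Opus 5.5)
We will apply Itô's formula to $\int_{\mathbb{T}^2} h_\delta(\rho^{\gamma,\eta})\varphi$ for a fixed test function $\varphi\in C^\infty(\mathbb{T}^2)$. This is rigorous because $\rho^{\gamma,\eta}$ is a weak (and renormalized kinetic) solution of \eqref{smooth spde} by Proposition \ref{entropy-mollified-DK-existence}. It yields a decomposition
\begin{align*}
h_\delta(\rho^{\gamma,\eta})(t) = h_\delta(\rho_0) + \int_0^t \mathcal{A}_s\,\mathrm{d}s + \mathcal{M}_t,
\end{align*}
with the following pieces.
\begin{itemize}
\item[(a)] Dissipative terms $\nabla\cdot(h_\delta'(\rho)\nabla\rho)$ and $-h_\delta''(\rho)|\nabla\rho|^2$.
\item[(b)] Interaction terms $\nabla\cdot(h'_\delta(\rho)\rho\mathcal{V}_\gamma[\rho]) - h''_\delta(\rho)\rho\,\mathcal{V}_\gamma[\rho]\cdot\nabla\rho$.
\item[(c)] The Itô correction $\frac{\varepsilon F_{1,K}}2\nabla\cdot(h'_\delta(\rho)s'_\eta(\rho)^2\nabla\rho) - \frac{\varepsilon F_{1,K}}{2} h''_\delta(\rho)s'_\eta(\rho)^2|\nabla\rho|^2$ together with $\frac{\varepsilon N_K}{2}h''_\delta(\rho)s_\eta(\rho)^2$.
\item[(d)] The martingale $\mathcal{M}_t=-\sqrt\varepsilon\int_0^t [\nabla\cdot(h_\delta'(\rho)s_\eta(\rho)\xi_K) - h''_\delta(\rho)s_\eta(\rho)\nabla\rho\cdot\xi_K]$.
\end{itemize}
The $W^{\alpha,1}_{[0,T]}H^{-\beta}$ norm will be bounded separately for the bounded-variation part and for $\mathcal{M}$. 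For the first, we use the embedding $W^{1,1}_{[0,T]}H^{-\beta}\hookrightarrow W^{\alpha,1}_{[0,T]}H^{-\beta}$, so it suffices to bound $\mathbb{E}\int_0^T\|\mathcal{A}_s\|_{H^{-\beta}}\mathrm{d}s$. For $\mathcal{M}$, we use the standard estimate $\mathbb{E}\|\mathcal{M}\|^2_{W^{\alpha,2}_{[0,T]}H^{-\beta}}\lesssim \mathbb{E}\int_0^T\|\text{integrand}\|^2_{L_2(U,H^{-\beta})}$, valid for $\alpha<1/2$ (Flandoli--Gatarek).

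Since $\beta>2$, Sobolev embedding in $d=2$ gives $H^{\beta}\hookrightarrow W^{1,\infty}$, hence $L^1\hookrightarrow H^{-\beta+1}$ and $W^{-1,1}\hookrightarrow H^{-\beta}$. It therefore suffices to bound each divergence term's flux in $L^1_{t,x}$ and each non-divergence term in $L^1_{t,x}$. The key structural fact is that $h_\delta''$ and $\psi_\delta'$ are supported in $[\delta/2,\delta]$, where $|h''_\delta|\lesssim\delta^{-1}$, and $h_\delta'$ is bounded uniformly.

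The terms are then controlled as follows.
\begin{itemize}
\item The flux $h_\delta'(\rho)\nabla\rho=2h'_\delta\sqrt\rho\nabla\sqrt\rho$ lies in $L^1$, bounded by mass and entropy dissipation \eqref{entropy0}.
\item $|h''_\delta||\nabla\rho|^2\lesssim \delta^{-1}\rho|\nabla\sqrt\rho|^2\mathbf 1_{\rho\le\delta}\lesssim|\nabla\sqrt\rho|^2$.
\item The interaction flux $\rho\mathcal{V}_\gamma[\rho]$ is bounded in $L^1$ by \eqref{interpolation}, uniformly in $\gamma$, since $\|\nabla(\kappa_\gamma*\mathcal G)\|_{L^1}\le\|\nabla\mathcal G\|_{L^1}$.
\item $|h''_\delta|\rho|\mathcal{V}_\gamma[\rho]||\nabla\rho|\lesssim \sqrt\rho|\nabla\sqrt\rho||\mathcal{V}_\gamma[\rho]|$ is handled by Cauchy--Schwarz and the $L^2$ bound on $\mathcal{V}$ preceding \eqref{interpolation}.
\item For the correction terms, $s'_\eta(\xi)^2\lesssim \xi^{-1}$ and $h'_\delta$ vanishes for $\xi<\delta/2$. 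So $|h'_\delta s'^2_\eta\nabla\rho|\lesssim \delta^{-1/2}|\nabla\sqrt\rho|$ and $|h''_\delta s'^2_\eta|\nabla\rho|^2|\lesssim\delta^{-1}|\nabla\sqrt\rho|^2$, while $|h''_\delta s_\eta^2|\lesssim 1$.
\end{itemize}
All of these are bounded in expectation by $C(\delta,\varepsilon,K)\big(\int\Psi(\rho_0)+\varepsilon N_KT+\|\rho_0\|_{L^1}^2+1\big)$, uniformly in $\gamma,\eta$. For the martingale, the Hilbert--Schmidt norm involves the finitely many modes $|j|\le K$. The integrands $h'_\delta s_\eta(\rho)e_j$ and $h''_\delta s_\eta\nabla\rho\, e_j$ are bounded in $L^2_{t,x}$ by $\sqrt{\rho}$ (mass) and by $\delta^{-1}\sqrt\delta\sqrt\rho|\nabla\sqrt\rho|\lesssim|\nabla\sqrt\rho|$ on $\{\rho\le\delta\}$. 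Taking the $H^{-\beta}$ norm, which is dominated by $L^1$ norms, the second moment of $\mathcal M$ is therefore finite uniformly.

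The main obstacle is the Itô correction term. It is exactly why the cutoff $h_\delta$ is introduced, and one must check that every occurrence of $s'_\eta(\rho)^2\sim\rho^{-1}$ is multiplied by $h'_\delta$ or $h''_\delta$, which kill $\{\rho<\delta/2\}$. A secondary point is that the interaction contributions must be bounded uniformly in $\gamma$ using only the entropy dissipation, which the smallness condition \eqref{eq:small-KS} guarantees via Proposition \ref{entropy estimate}.
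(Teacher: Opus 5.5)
Your proposal is correct and follows the paper's route. You apply It\^o's formula to $h_\delta(\rho^{\gamma,\eta})$, bound the drift terms in $W^{1,1}_{[0,T]}H^{-\beta}_{\mathbb{T}^2}$ through $L^1$ bounds on fluxes and source terms controlled by mass conservation and the uniform entropy estimate \eqref{entropy0} (with $h_\delta'$, $h_\delta''$ cutting off the singular factor $s_\eta'(\rho)^2$ near zero), and bound the martingale via the Flandoli--Gatarek estimate. The only difference is that the paper writes out just the interaction terms $(\mathrm{I})$ and $(\mathrm{II})$ and defers all remaining terms to \cite[Proposition 5.13]{WWZ22}, whereas you estimate every term explicitly.
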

\begin{proof}
By It\^o's formula, it holds in the sense of distribution that
\begin{align*}
h_{\delta}(\rho^{\gamma, \eta}) \Big|_0^t
& = \int_{0}^{t} \nabla \cdot \big(h'_{\delta}(\rho^{\gamma, \eta}) \nabla \rho^{\gamma, \eta}\big)
 - \int_{0}^{t} h''_{\delta}(\rho^{\gamma, \eta}) \, |\nabla \rho^{\gamma, \eta}|^{2} \\
& + \frac{\varepsilon F_{1,K}}{2}\int_{0}^{t} \nabla \cdot \big(h'_{\delta}(\rho^{\gamma, \eta}) [s'_{\eta}(\rho^{\gamma, \eta})]^{2} \nabla \rho^{\gamma, \eta}\big)-\frac{\varepsilon F_{1,K}}{2}\int^t_0h_{\delta}''(\rho^{\gamma,\eta})[s_{\eta}'(\rho^{\gamma,\eta})]^2|\nabla\rho^{\gamma,\eta}|^2\\
& + \frac{\varepsilon N_K}{2} \int_{0}^{t} h''_{\delta}(\rho^{\gamma, \eta}) s_{\eta}^{2}(\rho^{\gamma, \eta}) + \sqrt{\varepsilon}\int_{0}^{t} h'_{\delta}(\rho^{\gamma, \eta}) \nabla \cdot \left[ s_{\eta}(\rho^{\gamma, \eta})  \rho^{\gamma, \eta} \xi_K\right]\\
& -\int_{0}^{t} h'_{\delta}(\rho^{\gamma, \eta})\nabla\cdot(\rho^{\gamma, \eta}\mathcal{V}_{{\gamma}}[\rho^{\gamma, \eta}]). 
\end{align*}
Following the proof of Proposition 5.13 in \cite{WWZ22}, it suffices to estimate the interaction term, which we rewrite as 
\begin{align*}
-\int_{0}^{t}\nabla \cdot \left(h_{\delta}'(\rho^{\gamma, \eta}) \rho^{\gamma, \eta} \mathcal{V}_{\gamma}[\rho^{\gamma, \eta}]\right)+\int_{0}^{t} h_{\delta}''(\rho^{\gamma, \eta}) \nabla \rho^{\gamma, \eta}\cdot \left(\rho^{\gamma, \eta} \mathcal{V}_{\gamma}[\rho^{\gamma, \eta}]\right) =: (\mathrm{I}) + (\mathrm{II}).
\end{align*}
Since $\beta > 2$ and $\|h'_{\delta}\|_{L^{\infty}} < \infty$, we have by the same bound as in \eqref{eq:interaction is L1} that
\begin{align*}
\sup_{\eta, \gamma > 0}\left\|(\mathrm{I})\right\|_{W_{[0,T]}^{1,1}H^{-\beta}_{\mathbb{T}^2}} \lesssim \sup_{\eta, \gamma > 0}\left\| \mathcal{V}_{\gamma}[\rho^{\gamma, \eta}]\rho^{\gamma, \eta}\right\|_{L^1_{[0,T]}L_{\mathbb{T}^2}^1} \lesssim \sup_{\eta, \gamma > 0}\left\|\nabla\sqrt{\rho^{\gamma, \eta}}\right\|_{L^2_{[0,T]}L_{\mathbb{T}^2}^{2}}\|\rho^{\gamma, \eta}_0\|^{1/2}_{L^1_{\mathbb{T}^2}}+C(\rho_0),
\end{align*}
while since $\zeta \mapsto \zeta^{3/2} h''_{\delta}(\zeta)$ is bounded, 
\begin{align*}
\sup_{\eta, \gamma > 0}\left\|(\mathrm{II})\right\|_{W_{[0,T]}^{1,1}H^{-\beta}_{\mathbb{T}^2}} \lesssim \sup_{\eta, \gamma > 0}\left\| \mathcal{V}_{\gamma}[\rho^{\gamma, \eta}]\nabla \sqrt{\rho^{\gamma, \eta}}\right\|_{L^1_{[0,T]}L_{\mathbb{T}^2}^1} \lesssim \|\nabla \sqrt{\rho^{\gamma, \eta}}\|^{2}_{L^2_{[0,T]}L^2_{\mathbb{T}^2}}\|\rho^{\gamma, \eta}_0\|^{1/2}_{L^1_{\mathbb{T}^2}}+C(\rho_0).
\end{align*}
\end{proof}

Applying Aubin--Lions-Simon with \eqref{eq:entropy-truncated} and Lemma \ref{spde E T2}, with the help of the equivalence characterization of the $L^1_{[0,T]}L^1_{\mathbb{T}^2}$-convergence \cite[Definition 5.19 and Lemma 5.20]{FG24}, we can now derive the desired tightness result in $L^1_{[0,T]}L^1_{\mathbb{T}^2}$. 

In the following, we establish tightness for the martingale term. This argument is analogous to \cite[Proposition 5.23]{FG24}, and therefore we omit the proof. 

\begin{proposition}\label{T rho eta}
Under the same setting as in Proposition \ref{entropy estimate} and for every $\varepsilon \in (0,1)$ fixed, the laws of $\left\{\rho^{\gamma, \eta}\right\}_{\eta ,\gamma \in(0,1)}$ are tight on $L^1_{[0,T]}L^1_{\mathbb{T}^2}$ with the strong topology. Furthermore, for any $\alpha \in(0,1 / 2)$ and $\psi \in \mathrm{C}_c^{\infty}(\mathbb{T}^2\times(0, \infty))$, the family of laws of the martingales $(M^{\gamma,\eta})_{\eta,\gamma\in(0,1)}$ and $(M^{\gamma,\eta}_{\text{entropy}})_{\eta,\gamma\in(0,1)}$ defined by
 \begin{align}\label{D M}
 M_t^{\gamma,\eta}:=\sqrt{\varepsilon}\int_0^t \int_{\mathbb{T}^2} \psi\left(x, \rho^{\gamma, \eta}\right) \nabla \cdot\left[s_{\eta}\left(\rho^{\gamma, \eta}\right) \xi_{K}\right],  
\end{align}
and 
\begin{align}
M^{\gamma,\eta}_{\text{entropy},t}:=\sqrt{\varepsilon}\int_0^t\int_{\mathbb{T}^2}
\frac{s_{\eta}(\rho^{\gamma, \eta})}
{\sqrt{\rho^{\gamma, \eta}}}\nabla\sqrt{\rho^{\gamma, \eta}}\cdot\xi_{K}	
\end{align}

is tight on $\mathcal{C}_{[0, T]}\mathbb{R}$. 
\end{proposition}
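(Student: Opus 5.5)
Tightness of the laws of $\{\rho^{\gamma,\eta}\}$ on $L^1_{[0,T]}L^1_{\mathbb{T}^2}$ will come from a truncation-at-zero compactness argument, and tightness of the two martingale families from a Kolmogorov-type Hölder criterion.

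\textbf{Tightness of $\rho^{\gamma,\eta}$.} Fix $\delta\in(0,1)$. For each $\delta$, the bound \eqref{eq:entropy-truncated} and Lemma \ref{spde E T2} bound $h_\delta(\rho^{\gamma,\eta})$ uniformly in $L^1_\Omega$ in the space
\[
L^1_{[0,T]}W^{1,1}_{\mathbb{T}^2}\cap W^{\alpha,1}_{[0,T]}H^{-\beta}_{\mathbb{T}^2}.
\]
Since $W^{1,1}_{\mathbb{T}^2}\hookrightarrow L^1_{\mathbb{T}^2}$ compactly and $L^1_{\mathbb{T}^2}\hookrightarrow H^{-\beta}_{\mathbb{T}^2}$ continuously for $\beta>2$, the Aubin--Lions--Simon criterion \cite[Corollary 5]{Sim87} shows this intersection embeds compactly into $L^1_{[0,T]}L^1_{\mathbb{T}^2}$. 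Chebyshev's inequality then gives, for each $\delta$, a compact set $\mathcal{K}_\delta$ with
\[
\mathbb{P}\bigl(h_\delta(\rho^{\gamma,\eta})\notin\mathcal{K}_\delta\bigr)\le \epsilon\,2^{-n}
\]
uniformly in $\eta,\gamma$.

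\textbf{Removing the truncation.} We have $|\rho-h_\delta(\rho)|\le\delta$ pointwise, so $\|\rho-h_\delta(\rho)\|_{L^1_{[0,T]}L^1_{\mathbb{T}^2}}\le \delta T|\mathbb{T}^2|$ deterministically. Take $\delta_n=2^{-n}$ and set
\[
\mathcal{K}=\bigcap_n\bigl\{f:\ \mathrm{dist}_{L^1}(f,\mathcal{K}_{\delta_n})\le C\delta_n\bigr\}.
\]
This set is closed and totally bounded, hence compact, and it carries probability at least $1-\epsilon$ uniformly. This is exactly the characterisation in \cite[Definition 5.19, Lemma 5.20]{FG24}.

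\textbf{Tightness of the martingales.} I aim for uniform bounds $\mathbb{E}|M_t-M_s|^{p}\lesssim|t-s|^{p/2}$ for large $p$; Kolmogorov's criterion then gives uniform $C^\alpha$ bounds and hence tightness on $\mathcal{C}_{[0,T]}\mathbb{R}$.

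For $M^{\gamma,\eta}$, integrate by parts to rewrite the integrand as
\[
-\sqrt\varepsilon\, s_\eta(\rho)\xi_K\cdot\bigl[(\nabla_x\psi)(x,\rho)+\partial_\xi\psi(x,\rho)\nabla\rho\bigr].
\]
The quadratic variation density is
\[
\varepsilon\sum_{|j|\le K}\Bigl|\int s_\eta(\rho)e_j\,[\,\cdots]\Bigr|^2 .
\]
Because $\psi$ has compact support in $\xi\in(0,\infty)$, $\rho$ is confined to $[a,b]$ with $a>0$ on that support. There $|s_\eta(\rho)\nabla\rho|\lesssim|\nabla\sqrt\rho|$, so the quadratic variation is bounded by $C(\psi,K)\,\varepsilon\,(1+\|\nabla\sqrt\rho\|_{L^2}^2)$.

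For $M^{\gamma,\eta}_{\text{entropy}}$, use $s_\eta(\zeta)/\sqrt\zeta\lesssim1$ to bound the quadratic variation density by $\varepsilon F_{1,K}\|\nabla\sqrt{\rho}\|_{L^2}^2$.

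\textbf{Main obstacle.} BDG gives
\[
\mathbb{E}|M_t-M_s|^p\lesssim \mathbb{E}\Bigl(\int_s^t(1+\|\nabla\sqrt\rho\|^2_{L^2})\Bigr)^{p/2},
\]
but Proposition \ref{entropy estimate} controls only the first moment of $\int_0^T\|\nabla\sqrt\rho\|^2_{L^2}$. To get time-increment Hölder bounds, I first try to upgrade \eqref{entropy0} to $p$-th moments. Apply Itô to \eqref{PW ET q1}, take the $p$-th power of the supremum in time, and use BDG on the entropy martingale. Its quadratic variation is again bounded by $\varepsilon F_{1,K}\int\|\nabla\sqrt\rho\|^2$, which can be absorbed via Young's inequality. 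The interaction term is absorbed using \eqref{eq:small-KS} exactly as in \eqref{ker-es-2}.

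If higher moments still fail to give time regularity, I fall back on the Aldous criterion: bound $\mathbb{E}|M_{\tau+h}-M_\tau|$ for stopping times $\tau$ via the quadratic variation, using the uniform integrability of $\int_0^T\|\nabla\sqrt\rho\|^2$. This follows the route of \cite[Proposition 5.23]{FG24}.
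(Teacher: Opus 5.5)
Your tightness argument for $\{\rho^{\gamma,\eta}\}$ is correct and follows the paper's route: truncation by $h_\delta$, the bound \eqref{eq:entropy-truncated}, Lemma~\ref{spde E T2}, and Aubin--Lions--Simon. Your explicit compact set, built from $|\rho-h_\delta(\rho)|\le\delta$, is a clean substitute for the paper's appeal to \cite[Definition 5.19, Lemma 5.20]{FG24}.

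The gap is in the martingale part. Once you bound the quadratic variation density by $C\varepsilon(1+\|\nabla\sqrt{\rho}\|^2_{L^2_{\mathbb{T}^2}})$, each route you propose needs $\int_s^t\|\nabla\sqrt{\rho^{\gamma,\eta}}\|^2_{L^2_{\mathbb{T}^2}}\,\mathrm{d}r$ to be small as $|t-s|\to0$, uniformly in $(\gamma,\eta)$. That requires time-integrability of $\|\nabla\sqrt{\rho}\|_{L^2_{\mathbb{T}^2}}$ beyond $L^2_{[0,T]}$, or at least non-concentration in time, and no available estimate gives either.
\begin{itemize}
\item An It\^o/BDG upgrade of \eqref{PW ET q1} only yields moments of the total $\int_0^T\|\nabla\sqrt\rho\|^2_{L^2_{\mathbb{T}^2}}$. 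Such moments never produce a factor $|t-s|^{\theta}$ in $\mathbb{E}|M_t-M_s|^p\lesssim\mathbb{E}\bigl(\int_s^t(1+\|\nabla\sqrt\rho\|^2_{L^2_{\mathbb{T}^2}})\bigr)^{p/2}$.
\item The Aldous criterion needs $\sup_{\gamma,\eta}\sup_{\tau}\mathbb{P}\bigl(\int_\tau^{\tau+h}\|\nabla\sqrt{\rho^{\gamma,\eta}}\|^2_{L^2_{\mathbb{T}^2}}>c\bigr)\to0$ as $h\to0$. Uniform integrability over $\Omega$ of the total integral does not imply this: nothing rules out the measures $\|\nabla\sqrt{\rho^{\gamma,\eta}}(t)\|^2_{L^2_{\mathbb{T}^2}}\,\mathrm{d}t$ concentrating in time along the family.
\end{itemize}
As written, neither branch closes.

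The missing idea is to integrate by parts a second time and move the derivative onto $e_j$. This is possible because $\varepsilon$ and $K$ are fixed, so $\xi_K$ has finitely many smooth modes, and it makes both quadratic variations deterministically bounded.
\begin{itemize}
\item For $M^{\gamma,\eta}$, set $\Theta_\eta(x,\xi):=\int_0^\xi s_\eta(\zeta)\partial_\zeta\psi(x,\zeta)\,\mathrm{d}\zeta$. Then $s_\eta(\rho)\partial_\xi\psi(x,\rho)\nabla\rho=\nabla[\Theta_\eta(x,\rho)]-(\nabla_x\Theta_\eta)(x,\rho)$. Both $\Theta_\eta$ and $\nabla_x\Theta_\eta$ are bounded uniformly in $\eta$, by the compact support of $\psi$ and $s_\eta\lesssim\sqrt{\cdot}$. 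Each coefficient $\int_{\mathbb{T}^2}s_\eta(\rho)\nabla[\psi(x,\rho)]\,e_j$ is then bounded by $C(\psi)(1+|j|)$, so the quadratic variation density is at most $C(\psi)\varepsilon(F_{1,K}+N_K)$.
\item For $M^{\gamma,\eta}_{\text{entropy}}$, write $\frac{s_\eta(\rho)}{\sqrt\rho}\nabla\sqrt\rho=\nabla\Phi_\eta(\rho)$ with $\Phi_\eta(\xi):=\int_0^\xi\frac{s_\eta(\zeta)}{2\zeta}\,\mathrm{d}\zeta\lesssim\sqrt\xi$. This is the identity used in Step~2 of the proof of Theorem~\ref{spde WP}. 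It gives $\bigl|\int_{\mathbb{T}^2}\nabla\Phi_\eta(\rho)\,e_j\bigr|\lesssim|j|\,\|\rho_0\|^{1/2}_{L^1_{\mathbb{T}^2}}$, so the density is at most $C\varepsilon N_K\|\rho_0\|_{L^1_{\mathbb{T}^2}}$.
\end{itemize}
BDG then gives $\mathbb{E}|M_t-M_s|^p\le C_p|t-s|^{p/2}$ for every $p$, uniformly in $(\gamma,\eta)$. Kolmogorov's criterion and Arzel\`a--Ascoli then give tightness on $\mathcal{C}_{[0,T]}\mathbb{R}$, in fact in $C^{\alpha}$ for any $\alpha<1/2$. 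No moment upgrade of the entropy estimate is needed.
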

\begin{remark}
	We remark that $M^{\gamma,\eta}_{\text{entropy}}$ is a new term compared to \cite[Proposition~5.23]{FG24}. The tightness of $M^{\gamma,\eta}_{\text{entropy}}$ will be utilized for obtaining solutions that satisfy the pathwise entropy inequality \eqref{eq-2.5}. 
\end{remark}

We can now prove the main theorem of this chapter. 

\begin{proof}[Proof of Theorem \ref{spde WP}]
With the help of aforementioned tightness results, we know that 
$$
\left(\rho^{\gamma,\eta},\nabla\sqrt{\rho^{\gamma,\eta}},m^{\gamma,\eta},M^{\gamma,\eta},M^{\gamma,\eta}_{\text{entropy}},\left(\int^{\cdot}_0\langle e_j,\xi\rangle\right)_{j\in\mathbb{Z}^2}\right)_{\gamma,\eta}
$$
is tight on 
$$
L^1_{[0,T]}L^1_{\mathbb{T}^2}\times(L^2_{[0,T]}L^2_{\mathbb{T}^2},w)\times\mathcal{M}_{[0,T]\times\mathbb{T}^2}\times\mathcal{C}_{[0,T]}\mathbb{R}\times\mathcal{C}_{[0,T]}\mathbb{R}\times(\mathcal{C}_{[0,T]}\mathbb{R}^2)^{\mathbb{Z}^2},
$$
where $m^{\gamma,\eta}$ denotes the kinetic measure of $\rho^{\gamma,\eta}$, $\mathcal{M}_{[0,T]\times\mathbb{T}^2}$ denotes the space of Borel measures, endowed with weak topology. 

Applying the Jakubowski-Skorokhod representation theorem \cite{Jak97} with Proposition \ref{entropy estimate} and Proposition \ref{T rho eta}, we can find sequences $\{\gamma_k\}_{k \in \mathbb{N}}$ and $\{\eta_k\}_{k \in \mathbb{N}}$ both converging to zero, together with a new probability space, still denoted by $(\Omega, (\mathcal{F}_t)_{t \in [0,T]}, \mathbb{P})$. On this probability space, there exists a family of adapted space--time white noise $\{\xi^k\}_{k \in \mathbb{N}} \cup \{\xi\}$, such that 
\begin{align}
\int_0^{\cdot} \langle e_j, \xi^k \rangle \to \int_0^{\cdot} \langle e_j, \xi \rangle, 
\end{align}
strongly in $C_{[0,T]}\mathbb{R}^2$ a.s. as $k \to \infty$ for any $j \in \mathbb{Z}^2$. Furthermore, due to the tightness result and the entropy dissipation estimates, there exists a family of processes $\{\rho^{k}\}_{k \in \mathbb{N}} \cup \{\rho\}$ in $L^{\infty}_{[0,T]}L^1_{\mathbb{T}^2}$, share the same law with the original variables, such that
\begin{align}\label{eq:strong-L1-convergence}
\rho^{k} \to \rho,\quad \text{strongly in} \ L^1_{[0,T]}L^1_{\mathbb{T}^2}, \quad \rho^{k} \to \rho, \quad \text{a.e.},
\end{align}
and
\begin{align}\label{eq:weak-grad-square-convergence}
\nabla\sqrt{\rho^{k}} \rightharpoonup \nabla\sqrt{\rho}, \quad \text{weakly in} \ L^2_{[0,T]}L^2_{\mathbb{T}^2}.
\end{align}
Furthermore, there exists a family of process $\{M^k,M^k_{\text{entropy}}\}_{k\in\mathbb{N}}\cup\{M,M_{\text{entropy}}\}$ in $\mathcal{C}_{[0,T]}\mathbb{R}\times\mathcal{C}_{[0,T]}\mathbb{R}$, share the same law with the original variables, such that
\begin{align}\label{eq:martingale-conv-1}
M^k \to M,\quad \text{in} \ \mathcal{C}_{[0,T]}\mathbb{R},
\end{align}
and
\begin{align}\label{eq:martingale-conv-2}
M^k_{\text{entropy}} \to M_{\text{entropy}}, \quad \text{in} \ \mathcal{C}_{[0,T]}\mathbb{R}.
\end{align}
Additionally, similar to \cite[Theorem 5.25]{FG24}, there exists a family of defect measures $\{m^{k}\}_{k \in \mathbb{N}}$ such that $m^{k} \geq \delta_0(\xi - \rho^{k}) |\nabla \rho^{k}|^2$, a.s. and a.s. for every $\psi \in C_c^{\infty}(\mathbb{T}^2 \times \mathbb{R}_+)$ and $t \in [0,T]$, 
\begin{align}\label{WKS-k}
&\sqrt{\varepsilon}\int_0^t \int_{\mathbb{T}^2} \psi\left(x, \rho^k\right) \nabla \cdot\left(s_{\eta_k}(\rho^k) \cdot \xi^k_{K}\right)  =-\int_{\mathbb{R}}\int_{\mathbb{T}^2} \chi^k(x, \xi, \cdot) \psi(x, \xi) \Big|^t_0 \nonumber\\
&-\int_{0}^{t} \int_{\mathbb{T}^2}\nabla\rho^k\cdot\nabla_x\psi(x,\rho^k)
-\int_{0}^{t} \int_{\mathbb{T}^2} \nabla\cdot\left(\rho^k \mathcal{V}_{\gamma_k}[\rho^k])\psi(x, \rho^k\right)\nonumber\\
&-\int_{0}^{t} \int_{\mathbb{R}} \int_{\mathbb{T}^2} \partial_{\xi} \psi(x, \xi) \mathrm{d}m^k-\frac{\varepsilon F_{1,K}}{2}\int^t_0\int_{\mathbb{T}^2}s_{\eta_k}'(\rho^k)^2\nabla\rho^k\cdot(\nabla\psi)(x,\rho^k)
+\frac{\varepsilon N_K}{2}\int_{0}^{t} \int_{\mathbb{T}^2}s_{\eta_k}(\rho^k)^2 \partial_{\xi} \psi(x, \rho^k),
\end{align}
where $\chi^k$ denotes the renormalized kinetic function of $\rho^k$, $\xi^k_K := P_K \xi^k$ denotes colored space--time white noises. 

\textbf{Step 1: Recover the equation.}
We now aim to show that the pair $(\rho, m)$ is a kinetic solution to \eqref{cspde} in the sense of Definition \ref{SPDE-0-intro} with respect to $\xi$. In light of the analysis in \cite[Theorem 5.5]{FG24} and \cite[Theorem 6.3]{WWZ22}, it remains to justify the passage to the limit in the kernel terms, namely we claim that almost surely for every $t \in [0,T]$, 
\begin{align}\label{eq:claim-convergence-kernel-kinetic}
\lim_{k \to \infty}\int_{0}^{t} \int_{\mathbb{T}^2} \psi(x, \rho^k) \nabla \cdot\left(\rho^k \mathcal{V}_{\gamma_k}[\rho^k]\right) = \int_{0}^{t} \int_{\mathbb{T}^2} \psi(x, \rho) \nabla \cdot\left(\rho \mathcal{V}[\rho]\right),
\end{align}
which follows from the following two limits: almost surely for every $t \in [0,T]$, 
\begin{align}\label{eq:conv-kinetic-interaction1}
\lim_{k \to \infty}\int_{0}^{t} \int_{\mathbb{T}^2} (\nabla_x\psi)(x, \rho^k)  \cdot \mathcal{V}_{\gamma_k}[\rho^k] \rho^k  = \int_{0}^{t} \int_{\mathbb{T}^2} (\nabla_x\psi)(x, \rho)  \cdot \mathcal{V}[\rho] \rho,
\end{align}
where $\nabla_x \psi$ is smooth and compactly supported in the velocity variable so that $(x, \zeta) \mapsto \zeta \nabla_x \psi(x,\zeta)$ is smooth and bounded; and almost surely for every $t \in [0,T]$, 
\begin{align}\label{eq:conv-kinetic-interaction2}
\lim_{k \to \infty}\int_{0}^{t} \int_{\mathbb{T}^2} \tilde{\psi}(x, \rho^k)  \left(\kappa_{\gamma_k} \ast \rho^k-\|\rho_0\|_{L^1_{\mathbb{T}^2}}\right)  = \int_{0}^{t} \int_{\mathbb{T}^2}  \tilde{\psi}(x, \rho)  \left(\rho-\|\rho_0\|_{L^1_{\mathbb{T}^2}}\right),
\end{align}
for $\tilde{\psi}(\cdot, \xi):= \int_0^{\xi} \partial_{\zeta}\psi(\cdot,\zeta) \zeta\mathrm{d}\zeta$, since due to integration by parts and chain rule, 
\begin{align*}
\int_{\mathbb{T}^2} \psi(x, \rho) \nabla \cdot\left(\rho \mathcal{V}[\rho]\right) = -\int_{\mathbb{T}^2} (\nabla_x\psi)(x, \rho) \cdot \mathcal{V}[\rho] \rho + \int_{\mathbb{T}^2} \tilde{\psi}(x, \rho) \nabla\cdot \mathcal{V}[\rho].
\end{align*}
Recall that $\rho^k \to \rho$ in $L^1([0,T]\times\mathbb{T}^2)$ strongly according to \eqref{eq:strong-L1-convergence}, and it follows from the definition of $\mathcal{V}[\cdot]$ that 
\begin{align}\label{eq:cor-conv-L1L1}
\lim_{k \to \infty}\left( \left\|\mathcal{V}_{\gamma_k}[\rho^k - \rho]\right\|_{L^1_{[0,T]}L^1_{\mathbb{T}^2}}+\left\|\mathcal{V}_{\gamma_k}[\rho]-\mathcal{V}[\rho]\right\|_{L^1_{[0,T]}L^1_{\mathbb{T}^2}}\right) = 0, \quad \lim_{k \to \infty} \left\|\kappa_{\gamma_k} \ast \rho^k - \rho\right\|_{L^1_{[0,T]}L^1_{\mathbb{T}^2}} = 0. 
\end{align}
By the decomposition $|\mathcal{V}_{\gamma_k}[\rho^k] - \mathcal{V}[\rho]| \leq |\kappa_{\gamma_k} \ast \mathcal{V}[\rho] - \mathcal{V}[\rho]| +|\mathcal{V}_{\gamma_k}[\rho^k - \rho]|$, and since $(x, \zeta) \mapsto \zeta \nabla_x\psi(x, \zeta)$ is bounded, we have almost surely, 
\begin{align*}
\lim_{k \to \infty}\Big|\int_{0}^{t} \int_{\mathbb{T}^2} \nabla_x\psi(x, \rho^k)  \cdot \mathcal{V}_{\gamma_k}[\rho^k] \rho^k  - \int_{0}^{t} \int_{\mathbb{T}^2} \nabla_x\psi(x, \rho^k)  \cdot \mathcal{V}[\rho] \rho^k\Big| = 0, 
\end{align*}
for all $t\in[0,T]$. On the other side, as we have the point-wise convergence of $\nabla_x\psi(x, \rho^k) \rho^k$ to $\nabla_x\psi(x, \rho) \rho$ as $k \to \infty$, and $\left|\left[\nabla_x\psi(x, \rho^k) \rho^k - \nabla_x\psi(x, \rho) \rho\right] \cdot \mathcal{V}[\rho]\right| \lesssim \mathcal{V}[\rho] \in L^1_{[0,T]}L^1_{\mathbb{T}^2}$
we have by dominated convergence that
\begin{align*}
\lim_{k \to \infty} \int_{0}^{t} \int_{\mathbb{T}^2} \nabla_x\psi(x, \rho^k)  \cdot \mathcal{V}[\rho] \rho^k  = \int_{0}^{t} \int_{\mathbb{T}^2} \nabla_x\psi(x, \rho)  \cdot \mathcal{V}[\rho] \rho.
\end{align*}
The two limits above lead to \eqref{eq:conv-kinetic-interaction1}, while \eqref{eq:conv-kinetic-interaction2} follows from the second limit in \eqref{eq:cor-conv-L1L1} and the identical argument as above. For the remaining terms, we refer to \cite[Theorem 5.25]{FG24}. The passage to the limit for these terms holds almost surely for almost every $t\in[0,T]$. Consequently, $\bar{\rho}$ satisfies the kinetic formulation almost surely for almost every $t\in[0,T]$.

\textbf{Step 2: The path-wise entropy estimate.}
With the help of the entropy dissipation estimate \eqref{PW ET q1}, and using the identity in law, we obtain
\begin{align*}
&\int_{\mathbb{T}^2}\Psi_{\delta}(\rho^{k}) \Big|^T_0 
+ 4\int_0^T\int_{\mathbb{T}^2}\frac{\rho^{k}}{\rho^{k}+\delta}
|\nabla\sqrt{\rho^{k}}|^2-\int^T_0\int_{\mathbb{T}^2}\frac{\rho^k}{\rho^k+\delta}\mathcal{V}_{\gamma_k}[\rho^k]\cdot\nabla\rho^k\\
\lesssim\;& 2\sqrt{\varepsilon}\int_0^T \int_{\mathbb{T}^2}
\frac{s_{\eta_k}(\rho^{k})\sqrt{\rho^{k}}}{\rho^{k}+\delta}
\nabla\sqrt{\rho^{k}} \cdot \xi^k_{K}
+ \frac{\varepsilon N_K}{2}\int_{0}^{T}\int_{\mathbb{T}^2}
\frac{ [s_{\eta}(\rho^{k})]^2}{\rho^{k}+\delta}.
\end{align*}

By the choice of $s_{\eta_k}(\cdot)$, we adopt the convention that
$\frac{s_{\eta_k}(\zeta)\sqrt{\zeta}}{\zeta}=0$ when $\zeta=0$.
For the last term, using the assumption $s_{\eta_k}(\cdot)\leq c\sqrt{\cdot}$, we obtain
\begin{align*}
\frac{\varepsilon N_K}{2}\int^T_0\int_{\mathbb{T}^2}
\frac{[s_{\eta_k}(\rho^k)]^2}{\rho^k+\delta}
\lesssim \varepsilon N_K T.
\end{align*}

For the martingale term, by the assumption on $s_{\eta}(\cdot)$, it follows that
$$
\left|\frac{s_{\eta_k}(\rho^{k})\sqrt{\rho^k}}{\rho^{k}+\delta}
-\frac{s_{\eta_k}(\rho^{k})\sqrt{\rho^{k}}}{\rho^{k}}\right|
\lesssim 1.
$$
Hence, by the dominated convergence theorem, we obtain
\begin{align*}
&\mathbb{E}\Bigg|\int_0^T \int_{\mathbb{T}^2}
\frac{s_{\eta_k}(\rho^{k})\sqrt{\rho^{k}}}{\rho^{k}+\delta}
\nabla\sqrt{\rho^{k}} \cdot \xi^k_{K}
-\int_0^T \int_{\mathbb{T}^2}
\frac{s_{\eta_k}(\rho^{k})\sqrt{\rho^{k}}}{\rho^{k}}
\nabla\sqrt{\rho^{k}} \cdot \xi^k_{K}\Bigg|^2\\
\leq\;& \mathbb{E}\sum_{j=1}^K\int^T_0\Bigg(
\int_{\mathbb{T}^2}\left(\frac{s_{\eta_k}(\rho^{k})\sqrt{\rho^{k}}}{\rho^{k}+\delta}
-\frac{s_{\eta_k}(\rho^{k})\sqrt{\rho^{k}}}{\rho^{k}}\right)
\nabla\sqrt{\rho^{k}}\,e_j\,\mathrm{d}x\Bigg)^2\mathrm{d}s\\
\leq\;& \mathbb{E}\left(\int^T_0
\|\nabla\sqrt{\rho^{k}}\|_{L^2(\mathbb{T}^2)}^2\,\mathrm{d}s\right)
\left(\mathbb{E}\int^T_0\sum_{j=1}^K
\left\|\frac{s_{\eta_k}(\rho^{k})\sqrt{\rho^{k}}}{\rho^{k}+\delta}
-\frac{s_{\eta_k}(\rho^{k})\sqrt{\rho^{k}}}{\rho^{k}}\right\|_{L^2(\mathbb{T}^2)}^2
\mathrm{d}s\right)
\to 0,
\end{align*}
as $\delta\to 0$.

Consequently, along a subsequence as $\delta\to 0$, together with the kernel estimates \eqref{ker-es-1} and \eqref{ker-es-2}, we obtain almost surely
\begin{align}\label{pathwise-entropy-0}
\int_0^T\int_{\mathbb{T}^2}
|\nabla\sqrt{\rho^{k}}|^2
\lesssim\; \int_{\mathbb{T}^2}\Psi(\rho_0)\,\mathrm{d}x
+ 2\sqrt{\varepsilon}\int_0^T \int_{\mathbb{T}^2}
\frac{s_{\eta_k}(\rho^{k})}{\sqrt{\rho^{k}}}
\nabla\sqrt{\rho^{k}} \cdot \xi_{K}
+ \varepsilon N_K T + 1.
\end{align}

For any $t\in[0,T]$, $\varepsilon\in(0,1)$, and $k\in\mathbb{N}$, similarly to \cite[(5.29)--(5.33)]{FG24}, a characterization of the martingale $M^k_{\text{entropy}}$ shows that 
$$
M^{k}_{\mathrm{entropy}}(t)
= \sqrt{\varepsilon}\int_0^t\left\langle
\frac{s_{\eta_k}(\rho^k)}{\sqrt{\rho^k}}
\nabla\sqrt{\rho^k}, \xi^k_{K}(s)\right\rangle.
$$

In the following, we show that, along a subsequence as $k\to\infty$, almost surely for every $t\in[0,T]$,
\begin{align}\label{martingale-conv}
M^k_{\mathrm{entropy}}(t)\to
M_{\mathrm{entropy}}(t)
= \sqrt{\varepsilon}\int_0^t\left\langle
\nabla\sqrt{\rho}, \xi_{K}(s)\right\rangle.
\end{align}%It follows from the It\^o isometry and \eqref{entropy0} that
% \begin{align*}
% \mathbb{E}\int^T_0[M^{k}(t)]^2\mathrm{d}t\leq & C(K)T\mathbb{E}\int_{0}^{T}\|\nabla\sqrt{\rho^k}\|_{L^2(\mathbb{T}^d)}^2\mathrm{d}s\leq C(K,T).
% \end{align*}
%This implies that there exists $M\in L^2_{\Omega\times[0,T]}$ such that for every $f\in L^2_{\Omega\times[0,T]}$, 
%\begin{equation}\label{KSC M}
%  \lim_{k\rightarrow\infty}\mathbb{E}\left(
%  \int^T_0f(t) M^{k}(t)\mathrm{d}t\right)=\mathbb{E}\left(\int^T_0 f(t) M(t)\mathrm{d}t\right).
%\end{equation}
In the following, we characterize $M_{\mathrm{entropy}}$ as a martingale with respect to the filtration 
$\mathcal{F}_t := \sigma(\rho|_{[0,t]}, \xi|_{[0,t]})$, and as a stochastic integral with respect to $\xi_K$.

\medskip
\noindent\textbf{Martingale property of $M_{\mathrm{entropy}}$.}
For every $0 \leq s \leq t \leq T$ and $A \in \mathcal{F}_s$, since $(M^k_{\mathrm{entropy},t})_{k \in \mathbb{N}}$ is uniformly integrable for each $t \in [0,T]$, it follows from the martingale property of $M^k_{\mathrm{entropy}}$ and its almost sure convergence in $\mathcal{C}([0,T]; \mathbb{R})$ that 
\begin{equation}
\mathbb{E}\left( \mathbf{1}_{A} (M_{\mathrm{entropy},t}-M_{\mathrm{entropy},s})\right)
= \lim_{k\to \infty}\mathbb{E}\left(\mathbf{1}_{A} (M^k_{\mathrm{entropy},t}-M^k_{\mathrm{entropy},s})\right)
= 0.
\end{equation}
This implies that $\{M_{\mathrm{entropy},t}\}_{t\in[0,T]}$ is a martingale with respect to $\{\mathcal{F}_t\}_{t\in[0,T]}$.

\medskip
\noindent\textbf{Representation as a stochastic integral.}
For every $0 \leq s \leq t \leq T$, $A \in \mathcal{F}_s$, and $|j|\leq K$, it follows from the covariance structure of $\xi_K$ that
\begin{equation}
\mathbb{E}\Bigg[\mathbf{1}_A\left(
M^k_{\mathrm{entropy},t} B^k_{j,t}
- M^k_{\mathrm{entropy},s} B^k_{j,s}
- \sqrt{\varepsilon}\int_s^t \left\langle
\frac{s_{\eta_k}(\rho^k)}{\sqrt{\rho^k}}\nabla\sqrt{\rho^k},
e_j \right\rangle
\right)\Bigg] = 0.
\end{equation}

By the integration by parts formula and the dominated convergence theorem, we have
\begin{align*}
&\mathbb{E}\left|\mathbf{1}_A \sqrt{\varepsilon}
\int_s^t \left\langle
\frac{s_{\eta_k}(\rho^k)}{\sqrt{\rho^k}}\nabla\sqrt{\rho^k}
- \nabla\sqrt{\rho^k}, e_j
\right\rangle \right| \\
=\;& \mathbb{E}\left|\mathbf{1}_A \sqrt{\varepsilon}
\int_s^t \left\langle
\left(\int_0^{\rho^k}\frac{s_{\eta_k}(\zeta)}{2\zeta}\,\mathrm{d}\zeta
- \sqrt{\rho^k}\right),
\nabla e_j
\right\rangle \right|
\to 0,
\end{align*}
as $k \to \infty$.

Passing to the limit $k \to \infty$, we obtain that for every $A \in \mathcal{F}_s$,
\begin{equation}\label{KSC MI 1}
\mathbb{E}\Bigg[\mathbf{1}_A\Big(
M_{\mathrm{entropy},t} B_{j,t}
- M_{\mathrm{entropy},s} B_{j,s}
- \sqrt{\varepsilon}\int_s^t \left\langle
\nabla\sqrt{\rho}, e_j
\right\rangle
\Big)\Bigg] = 0.
\end{equation}
This implies that
$$
M_{\mathrm{entropy},t} B_{j,t}
- \sqrt{\varepsilon}\int_0^t \left\langle
\nabla\sqrt{\rho}, e_j
\right\rangle
$$
is a $\mathcal{F}_t$-martingale.

By a similar argument, for every $0 \leq s \leq t \leq T$ and $A \in \mathcal{F}_s$,
\begin{equation}\label{KSC MI 2}
\mathbb{E}\Bigg[\mathbf{1}_A\Big(
[M_{\mathrm{entropy},t}]^2
- [M_{\mathrm{entropy},s}]^2
- \varepsilon\int_s^t
\left\|P_K(\nabla\sqrt{\rho})\right\|_{L^2_{\mathbb{T}^2}}^2
\Big)\Bigg] = 0,
\end{equation}
which implies that
$$
[M_{\mathrm{entropy},t}]^2
- \varepsilon\int_0^t
\left\|P_K(\nabla\sqrt{\rho})\right\|_{L^2_{\mathbb{T}^2}}^2
$$
is a $\mathcal{F}_t$-martingale.

Combining \eqref{KSC MI 1}, \eqref{KSC MI 2}, and the covariance structure of Brownian motions, we deduce that
\begin{equation}
\mathbb{E}\Bigg[\Big(
M_{\mathrm{entropy},t}
- \sqrt{\varepsilon}\int_0^t \langle
\nabla\sqrt{\rho}, \xi_K
\rangle
\Big)^2\Bigg] = 0,
\end{equation}
which implies that, almost surely, for every $t \in [0,T]$,
\begin{equation}\label{KSC M IR 2}
M_{\mathrm{entropy},t}
= \sqrt{\varepsilon}\int_0^t
\langle \nabla\sqrt{\rho}, \xi_K \rangle.
\end{equation}

Therefore, passing to the limits of \eqref{pathwise-entropy-0}, then we conclude the pathwise entropy dissipation estimate \eqref{eq-2.5} for $\rho$. This completes the proof.

\end{proof}

\section{Exponential estimates}\label{sec-4}
In the following sections, we investigate the large deviation properties of \eqref{SPDE-1}. By following the approach developed in the previous section, one can establish the existence of probabilistic weak renormalized kinetic solutions to \eqref{SPDE-1}. Moreover, these solutions also qualify as weak solutions in the sense of Definition \ref{W SPDE-1}. In this section, we derive exponential estimates for the solutions to \eqref{SPDE-1} under suitable scalings of $(\varepsilon, K, \eta)$. Recall that the equation can be formulated in It\^o integral form 
\begin{equation}\label{cspde}
\partial_t\rho^{\eta}=\Delta\rho^{\eta}-\nabla\cdot\left(\rho^{\eta}\mathcal{V}[\rho^{\eta}]\right)
-\sqrt{\varepsilon}\nabla\cdot\left(s_{\eta}(\rho^{\eta})\cdot\xi_{K}\right)
+\frac{\varepsilon F_{1,K}}{2}\nabla\cdot \left([s'_{\eta}(\rho^{\eta})]^2\nabla\rho^{\eta}\right).
\end{equation}

\begin{proposition}\label{exponential-tight}
Let $\mathcal{V}[\cdot]$ be defined by \eqref{ker-V} such that \eqref{eq:small-KS} holds and consider the scaling
\begin{equation}\label{scale1}
\begin{gathered}
K(\varepsilon)\to\infty,\qquad \varepsilon N_{K(\varepsilon)}\rightarrow 0,\qquad
\varepsilon N_{K(\varepsilon)}\|s'_{\eta(\varepsilon)}\|_{L^\infty}^2\lesssim 1,\qquad \eta(\varepsilon) \to 0.
\end{gathered}
 \end{equation}
 as $\varepsilon\rightarrow 0$. For every probabilistically weak renormalized kinetic solution $\rho^{\eta}$ to \eqref{cspde}, in the sense of Definition \ref{RKS smooth spde} with initial data $\rho_0$ of finite entropy, we let $\mu^{\varepsilon}$ be the law of $\rho^{\eta} = \rho^{\eta(\varepsilon)}$. Then there exists a sequence of compact sets $\{\mathcal{K}_M\}_{M\in\mathbb{N}}$ in $L^1_{[0,T]}L^1_{\mathbb{T}^2}$, such that
\begin{equation}\label{ec2-2}
  \lim_{M\to \infty}\limsup_{\varepsilon\to 0}\varepsilon \log\mu^{\varepsilon}(\mathcal{K}^c_M) = -\infty.
\end{equation}
\end{proposition}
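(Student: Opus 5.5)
We would build the compact sets by an Aubin--Lions--Simon argument, as in the existence part, but now with exponential rather than moment bounds. Because the correction term is singular at zero, we truncate with the functions $h_\delta$ of Section~\ref{sec:existence}. For $M\in\mathbb{N}$ and a sequence $\delta_j\downarrow 0$ we define
\begin{align*}
\mathcal{K}_M:=\Big\{\rho\ge 0:\ \|\rho\|_{L^\infty_{[0,T]}L^1_{\mathbb{T}^2}}\le \|\rho_0\|_{L^1_{\mathbb{T}^2}},\ \|\nabla\sqrt{\rho}\|^2_{L^2_{[0,T]}L^2_{\mathbb{T}^2}}\le M,\ \|h_{\delta_j}(\rho)\|_{W^{\alpha,1}_{[0,T]}H^{-\beta}_{\mathbb{T}^2}}\le M c_j\ \ \forall j\Big\},
\end{align*}
with suitable constants $c_j$. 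Its relative compactness in $L^1_{[0,T]}L^1_{\mathbb{T}^2}$ would follow from Simon's criterion applied to each $h_{\delta_j}(\rho)$, which is bounded in $L^2_{[0,T]}W^{1,1}$ via $\nabla h_\delta(\rho)=2h_\delta'(\rho)\sqrt{\rho}\nabla\sqrt\rho$ and mass conservation. We would then combine this with the uniform estimate $\|\rho-h_\delta(\rho)\|_{L^1}\le \delta$ and a diagonal argument, as in \cite[Lemma 5.20]{FG24}. Taking a closure gives compact sets, and a union bound reduces \eqref{ec2-2} to two exponential estimates.

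\textbf{Step 1: exponential entropy dissipation.} We start from the pathwise inequality \eqref{eq-2.5}, which solutions in the sense of Definition~\ref{RKS smooth spde} satisfy by construction. Write $M_t=\sqrt{\varepsilon}\int_0^t\langle 2\frac{s_\eta(\rho)}{\sqrt\rho}\nabla\sqrt\rho,\xi_K\rangle$. Since $|s_\eta|\lesssim\sqrt{\cdot}$, its quadratic variation satisfies $\langle M\rangle_T\le C\varepsilon F_{1,K}\|\nabla\sqrt\rho\|_{L^2L^2}^2$; more simply, $\langle M\rangle_T\le C\varepsilon\|\nabla\sqrt\rho\|^2_{L^2L^2}$ after projecting, because the $e_j$ are orthonormal. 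Let $X:=\|\nabla\sqrt\rho\|^2_{L^2L^2}$, so that $X\le C_0(1+\int\Psi(\rho_0)+\varepsilon N_KT)+C_0M_T$. We take $\lambda=\theta/\varepsilon$ with $\theta$ small. Then
\begin{align*}
\mathbb{E}\,e^{\frac{\theta}{\varepsilon}X}\le e^{\frac{\theta C_1}{\varepsilon}}\,\mathbb{E}\,e^{\frac{\theta C_0}{\varepsilon}M_T-\frac{\theta^2 C_0^2}{\varepsilon^2}\langle M\rangle_T}\,e^{\frac{\theta^2C_0^2}{\varepsilon^2}\langle M\rangle_T}.
\end{align*}
Using $\frac{\theta^2C_0^2}{\varepsilon^2}\langle M\rangle_T\le\frac{\theta^2C_0^2C}{\varepsilon}X$ together with Cauchy--Schwarz and the exponential supermartingale, this quadratic variation term is absorbed into the left-hand side whenever $\theta$ is small. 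This absorption is exactly the fluctuation--dissipation structure at work. Since $\varepsilon N_K\to0$, we obtain $\limsup_\varepsilon\varepsilon\log\mathbb{E}e^{\theta X/\varepsilon}<\infty$, and Chebyshev gives $\varepsilon\log\mathbb{P}(X>M)\le C-\theta M\to-\infty$. To make the Cauchy--Schwarz step rigorous we would localize with stopping times.

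\textbf{Step 2: exponential time regularity, the main obstacle.} For fixed $\delta$ we expand $h_\delta(\rho)$ by It\^o's formula exactly as in Lemma~\ref{spde E T2}. On the event $\{X\le M\}$, all drift terms (Laplacian, interaction via \eqref{interpolation}, the $h''_\delta$ terms, and the corrections controlled by $\varepsilon N_K\|s'_\eta\|^2_\infty\lesssim1$ and $\varepsilon N_K\to0$) are bounded in $W^{1,1}_{[0,T]}H^{-\beta}$ by $C_\delta(1+M)$ deterministically. For the interaction drift we use $\zeta^{3/2}h''_\delta$ bounded and \eqref{eq:interaction is L1}. The difficulty is the martingale term $N_t=\sqrt\varepsilon\int_0^t h'_\delta(\rho)\nabla\cdot(s_\eta(\rho)\xi_K)$, whose $W^{\alpha,1}H^{-\beta}$ norm is not a martingale functional. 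We would Taylor expand $\mathbb{E}\exp(\frac{\lambda}{\varepsilon}\|N\|_{W^{\alpha,1}H^{-\beta}}\mathbf 1_{\{X\le M\}})=\sum_p\frac{\lambda^p}{\varepsilon^pp!}\mathbb{E}\|N\|^p$ and bound each $p$-th moment. We would use Minkowski's inequality in the Sobolev--Slobodeckij double integral and then BDG for the increments $N_t-N_s$ in $H^{-\beta}$, with $\beta>2$ so that $\sum_j\|\nabla e_j\cdot\|_{H^{-\beta}}$-type sums are handled by an $L^1\hookrightarrow H^{1-\beta}$ embedding. Since the integrand $h'_\delta(\rho)s_\eta(\rho)$ has $L^2$ norm bounded by $C\|\rho_0\|_{L^1}^{1/2}$, this yields $\mathbb{E}\|N\|^p\le (C\sqrt{\varepsilon p})^p$ (BDG constant $\sim\sqrt p$), uniformly in $K$ after using the $H^{-\beta}$ weights. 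Summing then gives $\sum_p\frac{(C\lambda\sqrt{p})^p}{\varepsilon^{p/2}p!}\le e^{C\lambda^2/\varepsilon}$, so that $\varepsilon\log\mathbb{P}(\|N\|>R)\le C\lambda^2-\lambda R$. Optimizing over $\lambda$ makes this tend to $-\infty$ as $R\to\infty$. The choices $c_j$ are then made so that the union bound over $j$ still tends to $-\infty$, for instance by taking $c_j$ large enough that $\sum_j e^{-(Mc_j)/\varepsilon\cdot\theta_j}$ is dominated by its first term. I expect the main obstacle to be the uniformity in $K$ of the BDG step: the noise enters through $\xi_K$, and one must verify that the $H^{-\beta}$ norm of $\nabla\cdot(f\,e_j)$ summed over $|j|\le K$ stays bounded in terms of $\|f\|_{L^2}$ without picking up $N_K$.
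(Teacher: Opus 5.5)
Your Step 1 is sound. The paper does the same absorption a bit more directly: it multiplies the pathwise bound by $\exp(-\frac12\langle\cdot\rangle)$ and uses the supermartingale property, so it needs neither Cauchy--Schwarz nor localization. Your total-boundedness argument for the truncated sets is also fine.

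\textbf{The gap is in Step 2.} The martingale that It\^o's formula produces for $h_\delta(\rho)$ is $\sqrt{\varepsilon}\int_0^t h'_\delta(\rho)\nabla\cdot(s_\eta(\rho)\xi_K)$, and it is not in divergence form:
\begin{equation*}
h'_\delta(\rho)\nabla\cdot\bigl(s_\eta(\rho)\xi_K\bigr)=\nabla\cdot\bigl(h'_\delta(\rho)s_\eta(\rho)\xi_K\bigr)-h''_\delta(\rho)s_\eta(\rho)\nabla\rho\cdot\xi_K .
\end{equation*}
Only the first piece has the deterministic bound $\|h'_\delta(\rho)s_\eta(\rho)\|_{L^2_{\mathbb{T}^2}}\lesssim\|\rho_0\|^{1/2}_{L^1_{\mathbb{T}^2}}$ that you invoke.

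For the second piece, $|h''_\delta(\rho)s_\eta(\rho)\nabla\rho|\lesssim|\nabla\sqrt{\rho}|$ on $\{\delta/2\le\rho\le\delta\}$, and nothing better is available. So its Hilbert--Schmidt norm is controlled only in $L^2$ in time, through $X$. The Flandoli--Gatarek estimate (or Minkowski plus BDG on increments) behind $\mathbb{E}\|N\|^p_{W^{\alpha,1}H^{-\beta}}\le(C\sqrt{\varepsilon p})^p$ for all $p$ needs $\mathbb{E}\int_0^T\|f(t)\|^p_{\mathcal{L}_2}\,\mathrm{d}t$, that is, $\|\nabla\sqrt{\rho}(t)\|_{L^2}$ in $L^p_t$. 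You do not have this.

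Restricting to $\{X\le M\}$ does not repair it. The indicator $\mathbf{1}_{\{X\le M\}}$ is not adapted. Stopping at $\tau_M=\inf\{t:\int_0^t\|\nabla\sqrt{\rho}\|^2_{L^2}\ge M\}$ only bounds $\int_0^{\tau_M}\|f\|^2$. That gives no uniform $|t-s|$-decay of $\mathbb{E}(\int_s^t\|f\|^2)^{p/2}$, which is what you need to integrate against $|t-s|^{-1-\alpha}$ for large $p$.

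Integrating the commutator by parts once more, $\nabla G_\delta(\rho)\cdot\xi_K=\nabla\cdot(G_\delta(\rho)\xi_K)-G_\delta(\rho)\nabla\cdot\xi_K$ with $G_\delta'=h''_\delta s_\eta$, does not help either:
\begin{itemize}
\item Bounding $G_\delta(\rho)\nabla\cdot\xi_K$ deterministically costs a factor $N_K$: the quadratic variation is of order $\varepsilon N_K\delta$, not $O(\varepsilon)$.
\item Bounding it sharply brings back $\|\nabla\sqrt{\rho}\|_{L^2}$.
\end{itemize}
Your scheme can be rescued, for example by a Dambis--Dubins--Schwarz time change for each real Fourier coefficient of $N$ (each has quadratic variation $\lesssim\varepsilon X$). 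Combine this with the pathwise bound $\int\!\!\int_{s<t}(A_t-A_s)^{\gamma}|t-s|^{-1-\alpha}\,\mathrm{d}s\,\mathrm{d}t\lesssim A_T^{\gamma}$, valid for nondecreasing $A$ and $\alpha<\gamma<\frac12$. That is, however, a genuinely additional ingredient.

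Uniformity in $K$, which you flagged as the main obstacle, is not the real difficulty. For $\beta>2$,
\begin{equation*}
\sum_j\|\nabla\cdot(fe_j)\|^2_{H^{-\beta}}\le\sum_k\langle k\rangle^{2-2\beta}\sum_j|\hat f(k-j)|^2\lesssim\|f\|^2_{L^2}.
\end{equation*}

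\textbf{How the paper avoids this.} It does not truncate. Since $\eta(\varepsilon)>0$, $\rho$ is a weak solution, and the It\^o correction satisfies
\begin{equation*}
\Bigl\|\varepsilon F_{1,K}\int_0^{\cdot}\nabla\cdot\bigl([s'_\eta(\rho)]^2\nabla\rho\bigr)\Bigr\|_{W^{1,1}_{[0,T]}H^{-\beta}_{\mathbb{T}^2}}\lesssim\varepsilon N_K\|s'_\eta\|^2_{L^\infty}\|\rho_0\|^{1/2}_{L^1_{\mathbb{T}^2}}\|\nabla\sqrt{\rho}\|_{L^2_{[0,T]}L^2_{\mathbb{T}^2}}\lesssim 1+X.
\end{equation*}
This is exactly what the hypothesis $\varepsilon N_K\|s'_\eta\|^2_{L^\infty}\lesssim1$ is for. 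In your proposal that hypothesis is essentially unused, because the truncation already controls the correction through $s'_\eta(\zeta)\lesssim\zeta^{-1/2}$.

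The only martingale left is then $\sqrt{\varepsilon}\int_0^{\cdot}\nabla\cdot(s_\eta(\rho)\xi_K)$. Its integrand is bounded deterministically by $\|s_\eta(\rho)\|_{L^2}\lesssim\|\rho_0\|^{1/2}_{L^1}$. Flandoli--Gatarek with constant $C^pp^{p/2}$ and the Taylor expansion then give the $\exp(C\delta^2/\varepsilon)$ bound. The compact sets are simply $\{\|\rho\|_{L^2_{[0,T]}W^{1,1}_{\mathbb{T}^2}}\vee\|\rho\|_{W^{1/3,1}_{[0,T]}H^{-3}_{\mathbb{T}^2}}\le M\}$, with no diagonal argument and no union bound over $j$.
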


\subsection{Exponential entropy dissipation estimates}

In the following, we show that the exponential moment of the Fisher information is finite, as a consequence of the pathwise entropy estimate. 

\begin{lemma}\label{exponential-entropy}
Under the same setting as in Proposition \ref{exponential-tight}, there exist constants $\delta_0>0$ such that for every $\delta <\delta_0$,
\begin{align}\label{eq:exp-moment-fisher}
 \sup_{\varepsilon\in(0,1)}\varepsilon \log\mathbb{E}\exp\left\{\frac{\delta}{\varepsilon }\|\nabla \sqrt{\rho^{\varepsilon}}\|_{L^2_{[0,T]}L^2_{\mathbb{T}^2}}^{2}\right\} < \infty. 
  \end{align}
\end{lemma}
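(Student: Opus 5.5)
The plan is to start from the pathwise entropy dissipation inequality \eqref{eq-2.5-2} (equivalently \eqref{eq-2.5}), which every solution in the sense of Definition \ref{RKS smooth spde} satisfies almost surely. Write $X_t:=\int_0^t\|\nabla\sqrt{\rho^{\eta}}\|_{L^2_{\mathbb{T}^2}}^2\mathrm{d}s$ and
\[
\mathcal{M}_t:=\sqrt{\varepsilon}\int_0^t\int_{\mathbb{T}^2}\frac{2s_{\eta}(\rho^{\eta})}{\sqrt{\rho^{\eta}}}\nabla\sqrt{\rho^{\eta}}\cdot\xi_K .
\]
Then \eqref{eq-2.5-2} gives $X_T\le C_0\big(\int\Psi(\rho_0)+\mathcal{M}_T+\varepsilon N_KT+1\big)$ with $C_0$ the implicit constant. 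Under \eqref{scale1} we have $\varepsilon N_K\to0$, so $\varepsilon N_K T$ is bounded uniformly in $\varepsilon$. It therefore suffices to control $\mathbb{E}\exp\{\frac{\delta}{\varepsilon}C_0\mathcal{M}_T\}$ by $\exp\{\frac{C}{\varepsilon}\}$ while absorbing part of $X_T$ on the left.

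The key step is the quadratic variation bound. Since $\xi_K$ has Fourier modes $|j|\le K$ with orthonormal $e_j$ and $s_\eta(\zeta)\lesssim\sqrt\zeta$, Bessel's inequality gives
\[
\mathrm{d}\langle\mathcal{M}\rangle_t=\varepsilon\sum_{|j|\le K}\Big|\Big\langle \tfrac{2s_\eta(\rho)}{\sqrt\rho}\nabla\sqrt\rho,e_j\Big\rangle\Big|^2\mathrm{d}t\le 4\varepsilon c^2\|\nabla\sqrt{\rho}\|_{L^2}^2\,\mathrm{d}t .
\]
Hence $\langle\mathcal{M}\rangle_T\le C_1\varepsilon X_T$, uniformly in $K$. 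This is the fluctuation–dissipation structure: the noise's quadratic variation is dominated by the Fisher information that the Laplacian dissipates. Let $\lambda=\frac{2\delta C_0}{\varepsilon}$ and write
\[
\frac{\delta C_0}{\varepsilon}\mathcal{M}_T=\tfrac12\Big(\lambda\mathcal{M}_T-\tfrac{\lambda^2}{2}\langle\mathcal{M}\rangle_T\Big)+\tfrac{\lambda^2}{4}\langle\mathcal{M}\rangle_T ,
\]
where $\frac{\lambda^2}{4}\langle\mathcal{M}\rangle_T\le \frac{\delta^2C_0^2C_1}{\varepsilon}X_T$. Apply Cauchy–Schwarz to $\mathbb{E}\exp\{\frac{\delta}{\varepsilon}X_T\}$ and use that $\exp(\lambda\mathcal{M}-\frac{\lambda^2}{2}\langle\mathcal{M}\rangle)$ is a nonnegative supermartingale with expectation at most $1$. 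This yields
\[
\mathbb{E}e^{\frac{\delta}{\varepsilon}X_T}\le e^{\frac{C}{\varepsilon}}\Big(\mathbb{E}e^{\frac{2\delta^2C_0^2C_1}{\varepsilon}X_T}\Big)^{1/2}.
\]
If $2\delta C_0^2C_1\le1$, i.e. $\delta<\delta_0:=1/(2C_0^2C_1)$, the last expectation is at most $\mathbb{E}e^{\frac{\delta}{\varepsilon}X_T}$. Dividing then gives $\mathbb{E}e^{\frac{\delta}{\varepsilon}X_T}\le e^{\frac{2C}{\varepsilon}}$, which is \eqref{eq:exp-moment-fisher}.

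The main obstacle is that dividing requires $\mathbb{E}e^{\frac{\delta}{\varepsilon}X_T}<\infty$ a priori, which is not known. I would handle this by localization. Let $\tau_R:=\inf\{t: X_t\ge R\}\wedge T$. Since \eqref{eq-2.5-2} holds for all $t$, it holds at $t=\tau_R$, and the stopped martingale $\mathcal{M}_{\cdot\wedge\tau_R}$ has bounded quadratic variation. So the argument above gives a finite expectation and a bound $\mathbb{E}e^{\frac{\delta}{\varepsilon}X_{\tau_R}}\le e^{2C/\varepsilon}$ uniform in $R$. Because $X_T<\infty$ a.s. (it lies in $L^1_\Omega$ by the entropy estimate), we have $\tau_R=T$ for large $R$, and Fatou's lemma concludes. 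A secondary point is that the constants must be uniform in $\varepsilon,\eta,K$. This holds because the Bessel bound is $K$-independent, the bound $s_\eta\lesssim\sqrt{\cdot}$ is uniform in $\eta$, and $\varepsilon N_K$ is bounded by \eqref{scale1}.
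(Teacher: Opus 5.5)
Your argument is correct and uses the same mechanism as the paper: exponentiate the pathwise entropy inequality, bound the quadratic variation of the noise term by $\varepsilon$ times the Fisher information (the fluctuation--dissipation structure), and use that the stochastic exponential has expectation at most $1$. The difference is only in execution: instead of Cauchy--Schwarz, absorption and localization, the paper multiplies both sides directly by $\exp(-\frac{1}{2}\langle M\rangle_T)$ and uses $\langle M\rangle_T\le \frac{4\delta^2}{\varepsilon}X_T$. This gives $\mathbb{E}\exp\{\frac{\delta-2\delta^2}{\varepsilon}X_T\}\le e^{C/\varepsilon}$ in one step, with no a priori finiteness needed, at the harmless cost of replacing $\delta$ by $\delta-2\delta^2$; your version keeps the exact $\delta$ and also tracks the implicit constant $C_0$, which the paper silently sets to $1$.
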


\begin{proof}
It follows from the pathwise entropy estimate that for any $\delta > 0$, it holds almost surely that
\begin{align}\label{vious et1}
  \exp\left\{\frac{\delta }{\varepsilon }\|\nabla \sqrt{\rho^{\varepsilon}}\|_{L^2_{[0,T]}L^2_{\mathbb{T}^2}}^{2}\right\}
  \leq&\exp\left\{\frac{\delta}{\varepsilon }\Big(\int_{\mathbb{T}^2}\Psi(\rho_0(x))\mathrm{d}x+\varepsilon N_KT+1\Big)\right\}\notag\\
  &\cdot\exp\left\{\frac{2\delta}{\sqrt{\varepsilon}}\int_0^t\int_{\mathbb{T}^2}\nabla\sqrt{\rho^{\varepsilon}}\cdot\xi_K\right\}.
\end{align}
Since the quadratic variation of the martingale term can by It\^o isometry: almost surely, for every $t\in[0,T]$, 
\begin{align*}
   \left\langle\frac{2\delta}{\sqrt{\varepsilon}}\int_{0}^{\cdot}\int_{\mathbb{T}^2}
\nabla\sqrt{\rho^{\varepsilon}}\cdot \xi_K\right\rangle(t) \leq \frac{4\delta^2}{\varepsilon}\int_{0}^{t}
   \|\nabla\sqrt{\rho^{\varepsilon}}\|_{L^2(\mathbb{T}^2)}^2\mathrm{d}s.
\end{align*}
Then multiplying both sides of \eqref{vious et1} by $$\exp\left(
-\frac{1}{2}\left\langle\frac{2\delta}{\sqrt{\varepsilon}}\int_{0}^{\cdot}\int_{\mathbb{T}^2}
\nabla\sqrt{\rho^{\varepsilon}}\cdot \xi_K\right\rangle(t)\right),$$ taking expectations and derive, as 
$$
\exp\left(
\frac{2\delta}{\sqrt{\varepsilon}}\int_{0}^{t}\int_{\mathbb{T}^2}
\nabla\sqrt{\rho^{\varepsilon}}\cdot \xi_K-\frac{1}{2}\left\langle\frac{2\delta}{\sqrt{\varepsilon}}\int_{0}^{\cdot}\int_{\mathbb{T}^2}
\nabla\sqrt{\rho^{\varepsilon}}\cdot \xi_K\right\rangle(t)\right)
$$
is a supermartingale, that 
\begin{align}\label{Et sqrt}
\mathbb{E}\exp\left\{\frac{\delta-2\delta^2}{\varepsilon }\|\nabla \sqrt{\rho^{\varepsilon}}\|_{L^2([0,T];L^2(\mathbb{T}^2))}^{2}\right\}\leq \mathbb{E}\exp\left\{\frac{\delta }{\varepsilon }\left(\int_{\mathbb{T}^2}\Psi(\rho_0(x))\mathrm{d}x+\varepsilon N_KT+1\right)\right\}.
\end{align}
The desired bound \eqref{eq:exp-moment-fisher} then follows from \eqref{Et sqrt} when $\delta\in(0,1/2)$ and that $(\varepsilon, K(\varepsilon))$ satisfy the scaling $\varepsilon N_{K(\varepsilon)}\rightarrow 0$.
\end{proof}
It is a direct consequence of \eqref{exponential-entropy} and the Markov inequality that  
\begin{align}
\lim_{M\rightarrow \infty}\limsup_{\varepsilon\rightarrow 0^+} \left\{\varepsilon \log\mathbb{P}\left(\|\nabla\sqrt{\rho^{\varepsilon}}\|_{L^2_{[0,T]}L^2_{\mathbb{T}^2}}^2>M\right)\right\}=-\infty. 	
\end{align}

\subsection{Proof of Proposition \ref{exponential-tight}}We recall the following classical estimates of martingales due to Flandoli--Gatarek \cite[Lemma 2.1]{FG95} for readers convenience. 
\begin{lemma}Given $p \ge 2$, $\alpha < \tfrac{1}{2}$ and separable Hilbert spaces $H$ and $K$, we have, for any progressively measurable process 
$f \in L^p(\Omega \times [0,T]; \mathcal{L}_2(K,H))$, that
\begin{align}\label{eq:FlandoliGatarek}
\mathbb{E}\left\|I(f)\right\|_{W^{\alpha,p}_{[0,T]}H}^p 
\leq C^p p^{\frac{p}{2}}  \mathbb{E} \left( \int_0^T \|f(t)\|_{\mathcal{L}_2(K,H)}^p \mathrm{d}t\right),
\end{align}
where $\mathcal{L}_2(K,H)$ denotes the Hilbert--Schmidt space, and $I(f):=\int^{\cdot}_0f\mathrm{d}W$ denotes the It\^o integral of $f$ with respect to the cylindrical Wiener process $W$ on $K$.
\end{lemma}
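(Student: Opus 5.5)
The plan is to estimate the two pieces of the fractional Sobolev norm separately. I use the Gagliardo form
\[
\|g\|_{W^{\alpha,p}_{[0,T]}H}^p=\int_0^T\|g(t)\|_H^p\,\mathrm{d}t+\int_0^T\!\!\int_0^T\frac{\|g(t)-g(s)\|_H^p}{|t-s|^{1+\alpha p}}\,\mathrm{d}s\,\mathrm{d}t .
\]
Both pieces will be reduced, via Fubini, to moments of increments of $I(f)$. Everything then rests on one analytic input: the Burkholder--Davis--Gundy inequality for $H$-valued continuous martingales with its sharp $p$-dependence. For $p\ge2$,
\[
\mathbb{E}\|I(f)(t)-I(f)(s)\|_H^p\le (C\sqrt p)^p\,\mathbb{E}\Big(\int_s^t\|f(r)\|_{\mathcal{L}_2(K,H)}^2\,\mathrm{d}r\Big)^{p/2}.
\]
The $\sqrt p$ growth of the BDG constant is classical, for instance from Burkholder's or Pinelis' martingale inequalities in Hilbert spaces, or from Davis' inequality combined with the Gaussian-type growth of moments. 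This is the only place where the factor $p^{p/2}$ enters, and I regard it as the main point to justify carefully. If needed, I would derive it directly by applying It\^o's formula to $\|I(f)\|_H^p$ and using a Gronwall/Young argument to track the constant $p(p-1)/2$, which yields $(Cp)^{p/2}$.

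Next, by H\"older's inequality in time (exponents $p/2$ and $p/(p-2)$),
\[
\Big(\int_s^t\|f\|^2\,\mathrm{d}r\Big)^{p/2}\le |t-s|^{\frac p2-1}\int_s^t\|f(r)\|^p\,\mathrm{d}r .
\]
Taking $s=0$ and integrating over $t\in[0,T]$ bounds the $L^p$ part by $(C\sqrt p)^pT^{p/2}\,\mathbb{E}\int_0^T\|f\|^p$, and $T^{p/2}\le C^p$.

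For the seminorm part, I insert the increment bound and exchange the order of integration to obtain
\[
(C\sqrt p)^p\,\mathbb{E}\int_0^T\|f(r)\|^p\Big(\iint_{s<r<t}|t-s|^{\frac p2-2-\alpha p}\,\mathrm{d}s\,\mathrm{d}t\Big)\mathrm{d}r ,
\]
with a factor $2$ for symmetry. For fixed $r$ and $u=t-s$, the set of admissible $s$ has length at most $u$, so the inner integral is bounded by $\int_0^T u^{\frac p2-1-\alpha p}\,\mathrm{d}u=\frac{T^{p(1/2-\alpha)}}{p(1/2-\alpha)}$. This is finite precisely because $\alpha<\tfrac12$, and it is at most $C^p$ with $C$ depending on $T$ and $\alpha$ only. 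Combining the two parts and absorbing the constants into $C^p$ gives \eqref{eq:FlandoliGatarek}.
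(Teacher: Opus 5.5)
Your proposal is correct and is essentially the paper's own argument: the paper cites Flandoli--Gatarek's proof (Gagliardo norm, BDG on the increments, H\"older in time, Fubini, with $\alpha<\tfrac12$ making the time kernel integrable). It then observes, via Ren's BDG bounds, that the only $p$-dependence comes from the BDG constant $C^p p^{p/2}$, which is exactly what you track. Your It\^o-formula fallback, giving $\mathbb{E}\|I(f)(t)-I(f)(s)\|_H^p\le \big(\tfrac{p(p-1)}{2}\big)^{p/2}|t-s|^{\frac p2-1}\,\mathbb{E}\int_s^t\|f(r)\|_{\mathcal{L}_2(K,H)}^p\,\mathrm{d}r$ after localization, is a valid self-contained way to justify that constant.
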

\begin{remark}
We remark that \cite[Lemma 2.1]{FG95} does not explicitly specify the dependence of the constant on $p$. However, by carefully examining its proof, one observes that this dependence arises from the application of the Burkholder-Davis-Gundy inequality. In view of \cite{Ren2008BDG}, the corresponding constant can be bounded by $C^p p^{\frac{p}{2}}$. Although this bound may not be optimal, it is sufficient for the exponential estimates derived below.  
\end{remark}

\begin{lemma}\label{lem:exp-time-regularity}
Under the same setting as Proposition \ref{exponential-tight}, so in particular,
\begin{align*}
\varepsilon N_K\|s'_\eta\|_{L^\infty}^2\lesssim 1,
\end{align*}
for $\beta>3$ fixed and $\delta>0$ sufficiently small,
\begin{align}\label{eq:exp-time-regularity}
\limsup_{\varepsilon \to 0^{+}} \varepsilon\log \left\{\mathbb{E} \left[ \exp \left( \frac{\delta}{\varepsilon} \|\rho^{\varepsilon}\|_{W_{[0,T]}^{\frac{1}{3},1}H_{\mathbb{T}^2}^{-\beta}} \right) \right]\right\} < \infty.
\end{align}
\end{lemma}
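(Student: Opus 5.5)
We propose to decompose $\rho^{\varepsilon}=\rho^{\eta(\varepsilon)}$ via the weak formulation \eqref{eq:kin_DK_reg}, which is available since the solutions are also weak solutions. Tested against $e_j$, it gives
\begin{align*}
\rho^{\varepsilon}(t)=\rho_0+\int_0^t\Delta\rho^{\varepsilon}-\int_0^t\nabla\cdot(\rho^{\varepsilon}\mathcal{V}[\rho^{\varepsilon}])+\frac{\varepsilon F_{1,K}}{2}\int_0^t\nabla\cdot\big([s'_\eta(\rho^{\varepsilon})]^2\nabla\rho^{\varepsilon}\big)-\sqrt{\varepsilon}\,I_t ,
\end{align*}
where $I_t=\int_0^t\nabla\cdot(s_\eta(\rho^{\varepsilon})\xi_K)$. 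By the triangle inequality in $W^{1/3,1}_{[0,T]}H^{-\beta}_{\mathbb{T}^2}$ and H\"older's inequality for exponentials, $\mathbb{E}e^{\frac{\delta}{\varepsilon}(A_1+\dots+A_4)}\le\prod_i(\mathbb{E}e^{\frac{4\delta}{\varepsilon}A_i})^{1/4}$. It therefore suffices to bound each term separately at the exponential scale $\varepsilon\log\mathbb{E}\exp(\frac{\delta'}{\varepsilon}\,\cdot)$.

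\textbf{Deterministic terms.} The three drift integrals have the form $\int_0^t g$, so their $W^{1,1}_{[0,T]}H^{-\beta}$ norm (and hence their $W^{1/3,1}$ norm) is bounded by $\|g\|_{L^1_{[0,T]}H^{-\beta}}$. Since $\beta>3>2$, we have $H^{\beta-1}\hookrightarrow W^{1,\infty}$, so $L^1_{\mathbb{T}^2}\hookrightarrow H^{-\beta+1}$.
\begin{enumerate}
\item \emph{Laplacian.} $\|\Delta\rho\|_{H^{-\beta}}\lesssim\|\rho\|_{L^1}=\|\rho_0\|_{L^1}$, which is a constant.
\item \emph{Interaction.} By \eqref{interpolation}, $\|\rho\mathcal{V}[\rho]\|_{L^1_{[0,T]}L^1}\lesssim\|\nabla\sqrt{\rho}\|^2_{L^2L^2}+C$.
\item \emph{It\^o correction.} We write $[s'_\eta]^2\nabla\rho=2[s'_\eta]^2\sqrt\rho\,\nabla\sqrt\rho$. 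By Cauchy--Schwarz and mass conservation, its $L^1_{[0,T]}L^1$ norm is bounded by $\|s'_\eta\|^2_{L^\infty}\|\rho_0\|_{L^1}^{1/2}T^{1/2}\|\nabla\sqrt\rho\|_{L^2L^2}$. Multiplying by $\varepsilon F_{1,K}\le\varepsilon N_K$ and using the scaling assumption $\varepsilon N_K\|s'_\eta\|^2_{L^\infty}\lesssim1$, this term is $\lesssim 1+\|\nabla\sqrt\rho\|^2_{L^2L^2}$.
\end{enumerate}
All three terms are thus controlled by $C+C\|\nabla\sqrt{\rho^{\varepsilon}}\|^2_{L^2L^2}$. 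Lemma \ref{exponential-entropy} then bounds their exponential moments, once $\delta$ is chosen small enough that $4C\delta<\delta_0$.

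\textbf{Martingale term.} This is the main obstacle, because the $W^{1/3,1}$ norm destroys the exponential-martingale structure. We instead expand $\mathbb{E}\exp(\frac{\delta}{\sqrt\varepsilon}\|I\|)=\sum_p\frac{\delta^p}{p!\,\varepsilon^{p/2}}\mathbb{E}\|I\|^p$ and bound each moment using \eqref{eq:FlandoliGatarek} with $H=H^{-\beta}$. For $p\ge2$ we use $\|\cdot\|_{W^{1/3,1}}\lesssim\|\cdot\|_{W^{1/3,p}}$ on $[0,T]$; the $p=1$ term is handled by Jensen. The Hilbert--Schmidt norm satisfies
\begin{align*}
\sum_{|j|\le K}\|\nabla\cdot(s_\eta(\rho)e_j)\|^2_{H^{-\beta}}\lesssim\sum_{|j|\le K}\|s_\eta(\rho)e_j\|^2_{H^{1-\beta}},
\end{align*}
and since $s_\eta(\rho)\lesssim\sqrt\rho$ gives $\|s_\eta(\rho)\|_{L^1}\lesssim\|\rho_0\|_{L^1}^{1/2}$, we have
\begin{align*}
\|s_\eta(\rho)e_j\|_{H^{1-\beta}}\lesssim\|s_\eta(\rho)\|_{L^1}\,\|e_j\|_{W^{\beta-1,\infty}}\lesssim\langle j\rangle^{\beta-1}.
\end{align*}
Summing over $|j|\le K$ gives a polynomial factor in $K$. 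A cleaner route is to bound instead $\|s_\eta(\rho)e_j\|_{H^{1-\beta}}\lesssim\|s_\eta(\rho)\|_{L^1}$ uniformly in $j$, using $\beta-1>2$ so that $\|e_j\phi\|_{L^\infty}\lesssim\|\phi\|_{H^{\beta-1}}$. The Hilbert--Schmidt norm squared is then $\lesssim F_{1,K}\le N_K$, deterministically. Hence
\begin{align*}
\mathbb{E}\|I\|^p\le C^p p^{p/2}N_K^{p/2},
\end{align*}
and
\begin{align*}
\mathbb{E}\exp\Big(\frac{\delta}{\sqrt\varepsilon}\|I\|\Big)\le\sum_p\frac{(C\delta)^p p^{p/2}}{p!}\Big(\frac{N_K}{\varepsilon}\Big)^{p/2}.
\end{align*}
Since $p^{p/2}/p!\le (Ce)^p/\Gamma(p/2+1)$, the series is bounded by $\exp(C\delta^2N_K/\varepsilon)$. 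This gives $\varepsilon\log(\cdots)\lesssim\delta^2N_K$, which is not bounded as $K\to\infty$.

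To fix this, we must gain a factor of $\varepsilon$. The key observation is that the exponent $\frac{\delta}{\varepsilon}\sqrt{\varepsilon}\|I\|=\frac{\delta}{\sqrt{\varepsilon}}\|I\|$ is already the correct scaling. Writing $\lambda=\delta/\sqrt\varepsilon$, the series becomes $\sum_p\lambda^p C^pp^{p/2}N_K^{p/2}/p!\lesssim\exp(C\lambda^2N_K)=\exp(C\delta^2N_K/\varepsilon)$, and after multiplying by $\varepsilon$ we obtain $C\delta^2N_K$, which again fails. We therefore propose to estimate the Hilbert--Schmidt norm more carefully in terms of $\varepsilon$-dependent quantities. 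Exploiting the scaling assumptions $\varepsilon N_K\to0$ and $\varepsilon N_K\|s'_\eta\|^2_{L^\infty}\lesssim1$, we replace the crude $N_K$ bound as follows. Rather than estimate $\|I\|$ alone, we estimate $\sqrt\varepsilon\|I\|$ with prefactor $\frac{\delta}{\varepsilon}$. Then the $p$-th term is $\frac{\delta^p}{\varepsilon^p p!}\varepsilon^{p/2}C^pp^{p/2}\|HS\|^p$. With $\|HS\|^2\lesssim F_{1,K}$, the sum is $\exp(C\delta^2F_{1,K}/\varepsilon)$, and so $\varepsilon\log(\cdots)\lesssim\delta^2F_{1,K}$.

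This is the crux: the crude bound loses a factor of $F_{1,K}$. To avoid it, we will use the finer estimate $\|HS\|^2=\sum_{|j|\le K}\|s_\eta(\rho)e_j\|^2_{H^{1-\beta}}$. By Parseval in the dual variable, this is $\le\sum_{m}\langle m\rangle^{2-2\beta}\sum_{|j|\le K}|\widehat{s_\eta(\rho)}(m-j)|^2\le\sum_m\langle m\rangle^{2-2\beta}\|s_\eta(\rho)\|^2_{L^2}\lesssim\|\rho\|_{L^1}$, uniformly in $K$. Here we use $\beta>3$ for summability of $\langle m\rangle^{2-2\beta}$ (and more than enough for $H^{\beta-1}\hookrightarrow W^{1,\infty}$), together with $s_\eta^2\lesssim\rho$. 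Thus the Hilbert--Schmidt norm is bounded by a constant independent of $K$ and $\varepsilon$. With this bound, the Taylor expansion yields
\begin{align*}
\varepsilon\log\mathbb{E}\exp\Big(\frac{\delta}{\sqrt\varepsilon}\|I\|_{W^{1/3,1}H^{-\beta}}\Big)\lesssim\delta^2,
\end{align*}
which completes \eqref{eq:exp-time-regularity} after combining with the deterministic terms via H\"older.
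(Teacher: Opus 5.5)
Your proof is correct and follows the paper's argument essentially step for step. You decompose via the weak formulation and bound the Laplacian, interaction and It\^o-correction terms by $C+C\|\nabla\sqrt{\rho^{\varepsilon}}\|^2_{L^2_{[0,T]}L^2_{\mathbb{T}^2}}$ (using $\varepsilon N_K\|s'_\eta\|^2_{L^\infty}\lesssim 1$), so that Lemma \ref{exponential-entropy} applies; you then treat the martingale through Flandoli--Gatarek moments with $C^pp^{p/2}$ constants, a Taylor expansion of the exponential, and a Hilbert--Schmidt bound $\lesssim\|s_\eta(\rho^{\varepsilon})\|^2_{L^2_{\mathbb{T}^2}}\lesssim\|\rho_0\|_{L^1_{\mathbb{T}^2}}$ that is uniform in $K$. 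Your Parseval computation of that bound is a cleaner version of the paper's, and the earlier $F_{1,K}$-dependent attempts should simply be dropped from the write-up.

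One small repair, which the paper also leaves implicit: the embedding $W^{1/3,p}_{[0,T]}\hookrightarrow W^{1/3,1}_{[0,T]}$ at the same fractional order fails in general. Instead, apply \eqref{eq:FlandoliGatarek} with some $\alpha\in(1/3,1/2)$ and use $W^{\alpha,p}_{[0,T]}\hookrightarrow W^{1/3,1}_{[0,T]}$, whose embedding constant (by H\"older) is bounded uniformly in $p$.
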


\begin{proof}
Given a weak solution $\rho^{\varepsilon}$ to \eqref{cspde}, we claim that 
\begin{align}\label{timereg-AC}
\left\|\rho^{\varepsilon}\right\|_{W_{[0,T]}^{\frac{1}{3},1} H^{-\beta}_{\mathbb{T}^2}} & \lesssim \| \rho^{\varepsilon}(0)\|_{L_{\mathbb{T}^2}^1}
+  \|\rho^{\varepsilon}(0)\|_{L^1_{\mathbb{T}^2}} \|\nabla\sqrt{\rho^{\varepsilon}}\|_{L_{[0,T]}^2L^2_{\mathbb{T}^2}}^{2} +\|\rho^{\varepsilon}(0)\|_{L^1_{\mathbb{T}^2}}^2\notag\\ 
& + \varepsilon N_K\|s'_{\eta}\|_{L^{\infty}}^2\|\rho^{\varepsilon}(0)\|_{L^1_{\mathbb{T}^2}}^{\frac{1}{2}}\|\nabla\sqrt{\rho^{\varepsilon}}\|_{L_{[0,T]}^2L^2_{\mathbb{T}^2}} + \left\|\sqrt{\varepsilon}\int_0^{\cdot}\nabla\cdot\left(s_{\eta}(\rho^{\varepsilon})\xi_{K}\right)\right\|_{W_{[0,T]}^{\frac{1}{3},1} H^{-\beta}_{\mathbb{T}^2}}.
\end{align}
where the implicit constant is independent of $\varepsilon$. Indeed, as \eqref{cspde} holds for $\rho^{\varepsilon}$ in the sense of distribution, it suffices to bound the terms separately by 
\begin{align}\label{timereg-AC1}
\left\|\int_0^{\cdot} \Delta \rho^{\varepsilon}\right\|_{W_{[0,T]}^{\frac{1}{3},1} H_{\mathbb{T}^2}^{-\beta}} \lesssim\left\|\int_0^{\cdot} \Delta \rho^{\varepsilon}\right\|_{\mathcal{C}_{[0,T]}^{1} H_{\mathbb{T}^2}^{-\beta}} \lesssim \| \rho^{\varepsilon}\|_{L_{[0,T]}^{\infty} H_{\mathbb{T}^2}^{2-\beta}} \lesssim \| \rho^{\varepsilon}\|_{L_{[0,T]}^{\infty} L_{\mathbb{T}^2}^1} \leq \| \rho^{\varepsilon}(0)\|_{L_{\mathbb{T}^2}^1},
\end{align}
since $\beta >3$, and \eqref{cspde} preserves $L^1$ norm;
\begin{align}\label{timereg-AC2}
\left\|\int_{0}^{\cdot} \nabla \cdot \left(\rho^{\varepsilon} \mathcal{V}[\rho^{\varepsilon}]\right)\right\|_{W_{[0,T]}^{\frac{1}{3},1} H_{\mathbb{T}^2}^{-\beta}} \lesssim  \|\rho^{\varepsilon}(0)\|_{L^1_{\mathbb{T}^2}}^{\frac{1}{3}}\|\nabla\sqrt{\rho^{\varepsilon}}\|_{L_{[0,T]}^2L^2_{\mathbb{T}^2}}^{2}+\|\rho^{\varepsilon}(0)\|_{L^1_{\mathbb{T}^2}}^2,
\end{align}
since 
\begin{align*}
\left\|\int_{0}^{\cdot} \nabla \cdot \left(\rho^{\varepsilon} \mathcal{V}[\rho^{\varepsilon}]\right)\right\|_{W^{\frac{1}{3},1}_{[0,T]}H_{\mathbb{T}^2}^{-\beta}} \leq \left\|\int_{0}^{\cdot} \rho^{\varepsilon} \mathcal{V}[\rho^{\varepsilon}]\right\|_{W^{1,1}_{[0,T]}L_{\mathbb{T}^2}^1} \leq  \left\|\rho^{\varepsilon} \mathcal{V}[\rho^{\varepsilon}]\right\|_{L^{1}_{[0,T]}L_{\mathbb{T}^2}^1},
\end{align*}
and we know that $\|\rho^{\varepsilon} \mathcal{V}[\rho^{\varepsilon}]\|_{L^{1}_{[0,T]}L_{\mathbb{T}^2}^1} \lesssim  \|\rho^{\varepsilon}(0)\|_{L^1_{\mathbb{T}^2}}\|\nabla\sqrt{\rho^{\varepsilon}}\|_{L_{[0,T]}^2L^2_{\mathbb{T}^2}}^{2}+\|\rho^{\varepsilon}(0)\|_{L^1_{\mathbb{T}^2}}^2$ by \eqref{interpolation}. For the correction term, we have similarly
\begin{align}\label{timereg-AC3}
\left\|\int_0^{\cdot}\nabla\cdot \left([s'_{\eta}(\rho^{\varepsilon})]^2\nabla\rho^{\varepsilon}\right)\right\|_{W_{[0,T]}^{\frac{1}{3},1} H_{\mathbb{T}^2}^{-\beta}} \lesssim&\left\|\int_0^{\cdot}\nabla\cdot \left([s'_{\eta}(\rho^{\varepsilon})]^2\nabla\rho^{\varepsilon}\right)\right\|_{W_{[0,T]}^{1,1} H_{\mathbb{T}^2}^{-\beta}} \lesssim \|[s'_{\eta}(\rho^{\varepsilon})]^2\nabla\rho^{\varepsilon}\|_{L_{[0,T]}^1L^1_{\mathbb{T}^2}} \nonumber \\ & \lesssim \|s'_{\eta}\|_{L^{\infty}}^2\|\rho^{\varepsilon}(0)\|_{L^1_{\mathbb{T}^2}}^{\frac{1}{2}}\|\nabla\sqrt{\rho^{\varepsilon}}\|_{L_{[0,T]}^2L^2_{\mathbb{T}^2}}.
\end{align}
Combining the estimates~\eqref{timereg-AC1}--\eqref{timereg-AC3}, we arrive at the first inequality in \eqref{timereg-AC}, while the second inequality follows directly from Young's inequality. For the martingale part, we claim that for any $p \in 2\mathbb{N}$ and $\alpha \in (0, \frac{1}{2})$ that 
\begin{align}\label{eq:timereg-MG}
\mathbb{E}\left\| \int_0^{\cdot} \nabla \cdot \left[s_{\eta}(\rho^{\varepsilon})  \cdot \xi_{K}\right]\right\|_{W_{[0,T]}^{\alpha,p}H_{\mathbb{T}^2}^{-\beta}}^p \leq  C^pp^{\frac{p}{2}} \|\rho^{\varepsilon}(0)\|^{\frac{p}{2}}_{L^1_{\mathbb{T}^2}},
\end{align}
where the generic constant $C$ is independent of $p$. 

By the definition of the noise \eqref{noise-def}, we first rewrite the stochastic integral as
\begin{align*}
\int_0^{\cdot} \nabla \cdot \big[ s_{\eta}(\rho^{\varepsilon}) \cdot \xi_{K} \big]
&= \int_0^{\cdot} \sum_{|j| \leq K} \nabla \cdot \big[ s_{\eta}(\rho^{\varepsilon}) e_j \big] B_j \\
&= \int_0^{\cdot} \sum_{i=1}^2 \sum_{|j| \leq K} \partial_{x_i} \big[ s_{\eta}(\rho^{\varepsilon}) e_j \big] B_j^i,
\end{align*}
where we denote $B_j = (B_j^1, B_j^2)$, $j \in \mathbb{N}$, as two-dimensional vector-valued Brownian motions. With this notation, the components $\sum_{j \in \mathbb{N}} e_j B_j^1$ and $\sum_{j \in \mathbb{N}} e_j B_j^2$ are both $L^2_{\mathbb{T}^2}$-cylindrical Wiener processes.

We define the integrand by
$$
f_i^{\varepsilon} = \partial_{x_i} \big[ s_{\eta}(\rho^{\varepsilon}) P_K \cdot \big], \qquad \text{for } i = 1,2,
$$
and introduce the notation
$$
I(f_i^{\varepsilon}) := \int_0^{\cdot} \sum_{|j| \leq K} \partial_{x_i} \big[ s_{\eta}(\rho^{\varepsilon}) e_j \big] B_j^i, \qquad \text{for } i = 1,2.
$$
According to \eqref{eq:FlandoliGatarek}, we deduce that
\begin{align}
\mathbb{E}\left\| \sum_{i=1}^2 I(f_i^{\varepsilon}) \right\|_{\dot{W}_{[0,T]}^{\alpha,p} H^{-\beta}_{\mathbb{T}^2}}^p
&\leq C^p p^{\frac{p}{2}} \sum_{i=1}^2 \mathbb{E} \left( \int_0^T \| f_i^{\varepsilon}(t) \|_{\mathcal{L}_2(L^2_{\mathbb{T}^2}, H^{-\beta}_{\mathbb{T}^2})}^p \, \mathrm{d}t \right).
\end{align}
Since we choose $\beta > 3$, then for $i=1,2$,  
\begin{align*}
\|f^{\varepsilon}_i(t)\|^2_{\mathcal{L}_2(L_{\mathbb{T}^2}^2,H_{\mathbb{T}^2}^{-\beta})} \lesssim \sum_{k \in \mathbb{Z}^2} \frac{1}{\langle k \rangle^{2\beta}}  \left \| \partial_{x_i} \left[s_{\eta}(\rho^{\varepsilon}) e_k\right] \right\|_{H_{\mathbb{T}^2}^{-\beta}}^2 \lesssim \|s_{\eta}(\rho^{\varepsilon}) \|^2_{L_{\mathbb{T}^2}^2} \leq \|\rho^{\varepsilon}(0)\|_{L^1_{\mathbb{T}^2}}.
\end{align*}
Consequently, we have by \eqref{eq:timereg-MG} and Fatou's lemma, 
\begin{align}\label{eq:exp-moment-MG}
\mathbb{E}\exp\left(\frac{\delta}{\sqrt{\varepsilon}} \left\|\sum_{i=1}^2I(f^{\varepsilon}_i)\right\|_{W_{[0,T]}^{\alpha,1}H_{\mathbb{T}^2}^{-\beta}}\right) \lesssim \mathbb{E}  \left[\sum_{p \in 2\mathbb{N}}\frac{1}{p!} \left(\frac{\delta}{\sqrt{\varepsilon}}\right)^p \left\|  \sum_{i=1}^2I(f^{\varepsilon}_i)\right\|_{W_{[0,T]}^{\alpha,1}H_{\mathbb{T}^2}^{-\beta}}^p\right] \lesssim \exp\left(\frac{C\delta^2}{\varepsilon}\right),
\end{align}
where the constant $C$ depends on $\|\rho^{\varepsilon}(0)\|_{L^1_{\mathbb{T}^2}}$. We used in the first inequality the same elementary argument as in \cite[(46)]{JW18} and in the last inequality that 
\begin{align*}
\sum_{p \in 2\mathbb{N}} \frac{1}{p!} (C\delta)^pp^{\frac{p}{2}} \varepsilon^{-\frac{p}{2}} & \sim \sum_{p \in 2\mathbb{N}} \frac{e^p}{p^{p+\frac{1}{2}}} (C\delta)^pp^{\frac{p}{2}} \varepsilon^{-\frac{p}{2}} = \sum_{p \in \mathbb{N}} \frac{e^{2p}}{(2p)^{2p+\frac{1}{2}}} (C\delta)^{2p}(2p)^{p} \varepsilon^{-p} \\& \sim \sum_{p \in \mathbb{N}} \frac{(Ce\delta)^{2p}}{p^{p+\frac{1}{2}}} (2\varepsilon)^{-p} \sim \sum_{p \in \mathbb{N}} \frac{(C^2e\delta^2)^{p}}{p!} (2\varepsilon)^{-p}   \lesssim \exp \left(\frac{C^2 e\delta^2}{2\varepsilon}\right),
\end{align*}
due to Stirling's formula, and we abuse the notation in \eqref{eq:exp-moment-MG} so that $C$ stands for a different constant from the one above. We now combine \eqref{timereg-AC} and \eqref{eq:exp-moment-MG} and derive by Young's inequality that 
\begin{align}
\mathbb{E} \left[ \exp \left( \frac{\delta}{\varepsilon} \|\rho^{\varepsilon}\|_{W_{[0,T]}^{\frac{1}{3},1}H_{\mathbb{T}^2}^{-\beta}} \right) \right] \lesssim & \mathbb{E} \left[ \exp \left(\frac{C\delta}{\varepsilon} \left[ 1 + \|\nabla\sqrt{\rho^{\varepsilon}}\|_{L_{[0,T]}^2L^2_{\mathbb{T}^2}}^{\frac{4}{3}} + \varepsilon N_K \|s'_{\eta}\|_{L^{\infty}}^2 \|\nabla\sqrt{\rho^{\varepsilon}}\|_{L_{[0,T]}^2L^2_{\mathbb{T}^2}}\right] \right) \right] \notag \\  & + \exp\left(\frac{C\delta^2}{\varepsilon}\right),
\end{align}
Taking $\delta>0$ sufficiently small and using Young's inequality, \eqref{eq:exp-moment-fisher}, and the assumption $\varepsilon N_K \|s'_{\eta}\|_{L^{\infty}}^2 \lesssim 1$, we obtain \eqref{eq:exp-time-regularity}.
\end{proof}

The proof of the desired Proposition is now straightforward.
\begin{proof}[Proof of Proposition \ref{exponential-tight}]
It follows directly from Lemma \ref{exponential-entropy}, Lemma \ref{lem:exp-time-regularity} and Markov inequality that $\rho^{\varepsilon}$ is exponential tight with respect the following sequence $\{\mathcal{K}_M\}$ in $L^1_{[0,T]}L^1_{\mathbb{T}^2}$:
\begin{align}
\mathcal{K}_M := \left\{\rho \in L_{[0,T]}^{\infty}L^{1}_{\mathbb{T}^2}: \left\|\rho\right\|_{L_{[0,T]}^{2}W^{1,1}_{\mathbb{T}^2}} \vee \left\|\rho\right\|_{W_{[0,T]}^{\frac{1}{3},1}H^{-3}_{\mathbb{T}^2}} \leq M\right\},
\end{align}
and it follows directly from Aubin--Lions--Simon \cite{Sim87} that $\mathcal{K}_M$ is compact in $L^1_{[0,T]}L^1_{\mathbb{T}^2}$ for fixed $M$.
\end{proof}

\section{Lower bound for large deviations}\label{sec:LB-LDP}
In this section, we prove the restricted lower bound of the large deviations. The proof relies on the method of relative entropy, for which we set up first an abstract Lemma. Let $E$ be a Polish space, and denote by $\mathcal{P}(E)$ the set of all probability measures on $E$. For any $\nu, \mu \in \mathcal{P}(E)$, the relative entropy $\mathcal{H}(\nu \mid \mu)$ is defined by
\begin{align*}
\mathcal{H}\left (\nu\mid\mu\right):=\int_E \log \left(\frac{\mathrm{d} \nu}{\mathrm{d} \mu}\right) \mathrm{d} \nu .
\end{align*}
\begin{lemma}[\cite{M10}, Lemma 7]\label{entropymethod}
Let $E$ be a Polish space, $I: E\rightarrow[0,+\infty]$ be a positive functional and $\{\mu^{\varepsilon}\}_{\varepsilon>0} $ be a family of probability measures on $E$. Suppose that for every $\rho \in E$, there exists a family $\{\nu^{\varepsilon,\rho}\}_{\varepsilon>0}$ such that $\nu^{\varepsilon,\rho}\rightharpoonup\delta_{\rho}$ weakly, and
\begin{align}\label{eq:relative-entropy-bound}
\limsup_{\varepsilon\rightarrow0}\varepsilon \mathcal{H}\left(\nu^{\varepsilon,\rho}|\mu^{\varepsilon}\right)\leq I(\rho).
\end{align}
Then the family $\{\mu^{\varepsilon}\}_{\varepsilon>0}$ satisfies the large deviation lower bound with speed $\varepsilon^{-1}$ and rate function $I$. 
\end{lemma}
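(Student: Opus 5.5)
The plan is the standard argument of Mariani: the relative entropy bound controls the probability of an open neighbourhood through the entropy inequality (a Donsker--Varadhan type bound). Fix an open set $G\subset E$ and $\rho\in G$ with $I(\rho)<\infty$; otherwise there is nothing to prove. It suffices to show
\[\liminf_{\varepsilon\to0}\varepsilon\log\mu^\varepsilon(G)\ge -I(\rho),\]
and then take the supremum over $\rho\in G$.

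First, since $\nu^{\varepsilon,\rho}\rightharpoonup\delta_\rho$ and $G$ is open, the Portmanteau theorem gives
\[\liminf_{\varepsilon\to0}\nu^{\varepsilon,\rho}(G)\ge \delta_\rho(G)=1,\]
so $\nu^{\varepsilon,\rho}(G)\to1$.

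Second, use the entropy inequality: for probability measures $\nu\ll\mu$ and a measurable set $A$ with $\nu(A)>0$,
\[\log\frac{\mu(A)}{\nu(A)}\ \ge\ -\frac{\mathcal H(\nu\mid\mu)+e^{-1}}{\nu(A)}.\]
To prove it, apply Jensen's inequality to the conditional measure $\nu(\cdot\mid A)$ and the concave function $\log$:
\[\log\frac{\mu(A)}{\nu(A)}=\log\int_A\frac{d\mu}{d\nu}\,\frac{d\nu}{\nu(A)}\ge -\frac{1}{\nu(A)}\int_A\log\frac{d\nu}{d\mu}\,d\nu.\]
Then bound $\int_A\log\frac{d\nu}{d\mu}\,d\nu\le \mathcal H(\nu\mid\mu)+e^{-1}$. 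This holds because the integrand $f\log f$ (with $f=d\nu/d\mu$, integrated against $\mu$) is bounded below by $-e^{-1}$, so removing $A^c$ costs at most $e^{-1}\mu(A^c)\le e^{-1}$.

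If $\mathcal H(\nu^{\varepsilon,\rho}\mid\mu^\varepsilon)=\infty$ along a subsequence, this contradicts \eqref{eq:relative-entropy-bound} with $I(\rho)<\infty$. Hence, for small $\varepsilon$, the entropy is finite and absolute continuity holds.

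Third, apply the inequality with $A=G$, multiply by $\varepsilon$, and let $\varepsilon\to0$. This gives
\[\liminf_{\varepsilon\to0}\varepsilon\log\mu^\varepsilon(G)\ge \liminf\Big(\varepsilon\log\nu^{\varepsilon,\rho}(G)-\frac{\varepsilon\mathcal H(\nu^{\varepsilon,\rho}\mid\mu^\varepsilon)+\varepsilon e^{-1}}{\nu^{\varepsilon,\rho}(G)}\Big)\ge -I(\rho),\]
using $\nu^{\varepsilon,\rho}(G)\to1$ and \eqref{eq:relative-entropy-bound}.

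There is no real obstacle. The only care needed is the absolute-continuity and finiteness bookkeeping and the lower bound $-e^{-1}$ on $x\log x$ when restricting the entropy integral to $G$. For the restricted version used in Theorem \ref{thm:LDP}, the same argument applies verbatim with $\rho$ ranging only over $G\cap\mathcal C_0$.
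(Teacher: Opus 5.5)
Your proof is correct and is the standard argument behind \cite[Lemma 7]{M10}, which the paper cites without proof. Portmanteau gives $\nu^{\varepsilon,\rho}(G)\to 1$, and the entropy inequality $\log\frac{\mu(A)}{\nu(A)}\ge -\frac{\mathcal{H}(\nu\mid\mu)+e^{-1}}{\nu(A)}$ transfers the relative-entropy bound to $\mu^{\varepsilon}(G)$. One small correction: the first step of your Jensen derivation should read $\ge$ rather than $=$, since $\mu$ need not be absolutely continuous with respect to $\nu$ on $A$. This only strengthens the inequality in the direction you need.

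Your closing remark is also apt. Because the argument works one $\rho$ at a time, it gives exactly the restricted lower bound the paper needs, where the hypothesis is verified only for $\rho\in\mathcal{C}_0$.
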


Recall that the rate function $\mathcal{I}$ is defined by \eqref{I0-intro}. For any $\rho\in L^1_{[0, T]}L^1_{\mathbb{T}^2}$ such that $\mathcal{I}(\rho)=+\infty$, we can take
\begin{align*}
\nu^{\varepsilon, \rho} \equiv \delta_{\rho}, \ \text{for all} \ \varepsilon > 0, 
\end{align*}
and the conditions in Lemma \ref{entropymethod} were satisfied trivially; thus we only focus on the case of $\mathcal{I}(\rho)<+\infty$. We also notice that, for any $\rho \in \mathcal{C}_0$ with $\mathcal{I}(\rho)<+\infty$, where $\mathcal{C}_0$ is defined in \eqref{S-intro}, $\rho$ can be represented as a weak solution to the skeleton equation. 

\begin{lemma}\label{ske-lem}
Let $\mathcal{V}[\cdot]$ be defined by \eqref{ker-V}, and $\rho$ be a nonnegative function satisfying $\mathcal{I}(\rho)<+\infty$, then there
exists a function $\Psi^{\rho}$, such that $\sqrt{\rho}\nabla\Psi^{\rho} \in L^2_{[0,T]}L^2_{\mathbb{T}^2}$, and $\rho$ is a weak solution of 
\begin{equation}\label{eq-riesz}
\partial_t\rho+\nabla\cdot(\rho \mathcal{V}[\rho])=\Delta\rho-\nabla\cdot(\rho\nabla\Psi^{\rho}),
\end{equation}
and the rate function $\mathcal{I}$ defined in \eqref{I0-intro} has an equivalent representation
\begin{align}\label{RF low}
\mathcal{I}(\rho)=\frac{1}{2}\|\sqrt{\rho}\nabla\Psi^{\rho}\|_{L_{[0,T]}^2L^2_{\mathbb{T}^2}}^2.
\end{align}
If we further assume that $\rho \in L^\infty_{[0,T]}L^\infty_{\mathbb{T}^2}$,  then $\rho$ is also a renormalized kinetic solution of the skeleton equation \eqref{ske} with control $g=\sqrt{\rho}\nabla\Psi^{\rho}$. 
\end{lemma}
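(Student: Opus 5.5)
The plan is the standard Riesz-representation argument of \cite{FG23} (and of \cite{WZ24} for interacting systems), adapted to the singular drift. Fix $\rho$ with $\mathcal{I}(\rho)<\infty$, so $\rho\in\mathcal{E}_{\mathrm{fin}}$. By \eqref{L2-ES} and \eqref{interpolation}, $\rho\mathcal{V}[\rho]\in L^1_{[0,T]}L^1_{\mathbb{T}^2}$, so the linear part of the functional in \eqref{I0-intro},
\[
\ell(\varphi):=\langle \rho,\varphi \rangle\big|^T_0 - \langle \rho,(\partial_t + \Delta)\varphi\rangle - \langle\rho \mathcal{V}[\rho],\nabla\varphi\rangle ,
\]
is well defined for every $\varphi\in C^\infty([0,T]\times\mathbb{T}^2)$. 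On this space I will use the degenerate weighted seminorm $\|\varphi\|_\rho^2:=\int_0^T\int_{\mathbb{T}^2}\rho|\nabla\varphi|^2$. Replacing $\varphi$ by $\lambda\varphi$ in the supremum and optimizing over $\lambda\in\mathbb{R}$ gives $\ell(\varphi)^2\le 2\mathcal{I}(\rho)\|\varphi\|_\rho^2$. Hence $\ell$ vanishes on the null space of $\|\cdot\|_\rho$ and is bounded with respect to it.

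Next, let $H_\rho$ be the Hilbert space obtained as the completion of $\{\nabla\varphi\}$ modulo null sets in $L^2(\rho\,\mathrm{d}x\,\mathrm{d}t)$. The Riesz representation theorem then gives an element $\nabla\Psi^\rho\in H_\rho$ with $\ell(\varphi)=\langle\rho\nabla\Psi^\rho,\nabla\varphi\rangle$ for all smooth $\varphi$. This identity is exactly the weak formulation of \eqref{eq-riesz}, and $\sqrt\rho\nabla\Psi^\rho\in L^2_{[0,T]}L^2_{\mathbb{T}^2}$ by construction. Substituting back, the supremum becomes $\sup_\varphi\{\langle\rho\nabla\Psi^\rho,\nabla\varphi\rangle-\frac12\|\varphi\|_\rho^2\}$. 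Taking $\nabla\varphi_n\to\nabla\Psi^\rho$ in $H_\rho$ attains $\frac12\|\sqrt\rho\nabla\Psi^\rho\|^2$, and Cauchy--Schwarz gives the reverse inequality, which yields \eqref{RF low}. Setting $g=\sqrt\rho\nabla\Psi^\rho$ rewrites \eqref{eq-riesz} as the weak form of \eqref{ske}.

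For the last assertion, assume in addition $\rho\in L^\infty_{[0,T]}L^\infty_{\mathbb{T}^2}$. I would mollify in space, $\rho^\delta=\rho*\kappa_\delta$, test the weak equation with $\psi(x,\rho^\delta)$ for $\psi\in C^\infty_c(\mathbb{T}^2\times(0,\infty))$, and apply the chain rule. This produces the kinetic formulation with kinetic measure $q=\delta_0(\xi-\rho)|\nabla\rho|^2$ plus commutator errors, and the plan is to show those errors vanish as $\delta\to0$.
\begin{itemize}
\item Diffusion term: handled as in \cite{FG23}, using $\nabla\rho=2\sqrt\rho\nabla\sqrt\rho\in L^2_{[0,T]}L^2_{\mathbb{T}^2}$, which follows from boundedness of $\rho$.
\item Control term: $\nabla\cdot(\sqrt\rho g)$ tested against $\psi(x,\rho^\delta)$ is a DiPerna--Lions type commutator. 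It is controlled because $\sqrt\rho g\in L^2$ and $\nabla\rho\in L^2$.
\item Interaction term (the main obstacle): the commutator for $\nabla\cdot(\rho\mathcal{V}[\rho])$. Here $L^\infty$ is what saves the argument. Since $\rho\in L^\infty_{[0,T]}(L^1\cap L^\infty)$, the velocity $\mathcal{V}[\rho]=V*\rho$ with $V\in L^p$, $p<2$, is bounded. Moreover $\nabla\mathcal{V}[\rho]$ is a Calder\'on--Zygmund operator applied to $\rho$, so it lies in every $L^q$, $q<\infty$, and in particular $\nabla\cdot\mathcal{V}[\rho]=\kappa_1(\rho-\bar\rho)\in L^\infty$.
\end{itemize}
The DiPerna--Lions commutator lemma with $\mathcal{V}[\rho]\in L^2_tW^{1,q}$ and $\rho\in L^\infty$ then sends this commutator to zero. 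The remaining limits pass by dominated convergence, so the kinetic formulation holds and $\rho$ is a renormalized kinetic solution of \eqref{ske}.
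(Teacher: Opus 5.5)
Your proposal is correct. The first half of your argument is exactly the Riesz-representation argument the paper omits by reference to \cite[Lemma 36]{FG23} and \cite[Lemma 6.2]{GHW23}: you bound $\ell$ by $\sqrt{2\mathcal{I}(\rho)}\,\|\varphi\|_\rho$, represent it on the closure of $\{\nabla\varphi\}$ in $L^2(\rho\,\mathrm{d}x\,\mathrm{d}t)$, and recover \eqref{RF low} by density.

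The renormalized kinetic part takes a different route. The paper does no renormalization itself. It invokes \cite[Theorem 3.5]{WZ24}, which turns weak solutions of the controlled equation into renormalized kinetic solutions once \cite[(3.22)]{WZ24} holds. It then uses $\rho\in L^\infty_{[0,T]}L^\infty_{\mathbb{T}^2}$ only to get $\rho|\mathcal{V}[\rho]|^2\in L^1_{[0,T]}L^1_{\mathbb{T}^2}$ and $\mathcal{V}[\rho]\in L^2_{[0,T]}L^2_{\mathbb{T}^2}$ via Young's inequality.

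Your mollify-and-test argument makes the lemma self-contained, but it uses machinery that is not actually needed. Once $\rho$ is bounded, you have $\nabla\rho=2\sqrt{\rho}\nabla\sqrt{\rho}\in L^2$. You also have $\rho\mathcal{V}[\rho]\in L^\infty$, since $\|V\ast\rho\|_{L^\infty}\le\|V\|_{L^1}\|\rho\|_{L^\infty}$. Hence $\int\nabla[\psi(x,\rho^\delta)]\cdot(\rho\mathcal{V}[\rho])\ast\kappa_\delta$ converges simply by strong $L^2$ convergence of both factors. The identity $\nabla\cdot\mathcal{V}[\rho]=\kappa_1(\rho-\bar{\rho})$ is then enough to rewrite the limit as $-\int\psi(x,\rho)\nabla\cdot(\rho\mathcal{V}[\rho])$. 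Neither a Calder\'on--Zygmund bound on $\nabla\mathcal{V}[\rho]$ nor a DiPerna--Lions commutator is required.

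For completeness, you should also state two items explicitly. First, mass conservation, which follows by testing the weak form with $\varphi\equiv1$. Second, \eqref{KM VI}, which is immediate because $m=\delta_0(\xi-\rho)|\nabla\rho|^2$ vanishes for $\xi>\|\rho\|_{L^\infty}$.
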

\begin{proof}
The existence of $\Psi^{\rho}$ and the identity of the rate function \eqref{RF low} can be shown by a Riesz representation approach, this proof is similar to \cite[Lemma 6.2]{GHW23} and \cite[Lemma 36]{FG23}, thus we omit it. In order to show that $\rho$ is also a renormalized kinetic solution, with the help of \cite[Theorem 3.5]{WZ24}, it is sufficient to verify the condition \cite[(3.22)]{WZ24}. Since we impose regularity $\rho\in L^\infty_{[0,T]}L^\infty_{\mathbb{T}^2}$, by the integrability of $V$ and convolutional Young's inequality, we have that $\rho|\mathcal{V}[\rho]|^2\in L^1_{[0,T]}L^1_{\mathbb{T}^2}$ and $\mathcal{V}[\rho]\in L^2_{[0,T]}L^2_{\mathbb{T}^2}$. Therefore \cite[(3.22)]{WZ24} holds, and $\rho$ is a renormalized kinetic solution of the skeleton equation \eqref{ske} with control $g=\sqrt{\rho}\nabla\Psi^{\rho}$. This completes the proof. 
	
\end{proof}

We fix a probabilistically weak solution $\rho^{\eta}$ of \eqref{SPDE-1} on some probability space $(\Omega,\mathcal{F}, (\mathcal{F}_t)_{t \in [0,T]}, \mathbb{P})$ together with the adapted space--time white noise $\tilde{\xi}$. Let $\mu_{\varepsilon}$ be the law of $\rho^{\eta}$. In the following, we consider the equation
\begin{equation}\label{SCPDE}
  \partial_{t} \rho^{\flat,\varepsilon}=\Delta \rho^{\flat,\varepsilon} - \nabla\cdot \left(\rho^{\flat,\varepsilon}\mathcal{V}[\rho^{\flat,\varepsilon}]\right)
-\sqrt{\varepsilon}\nabla\cdot \left( s_{\eta}(\rho^{\flat,\varepsilon})\circ \xi_{K} \right)-\nabla\cdot \left[ s_{\eta}(\rho^{\flat,\varepsilon}) P_{K}\left(\sqrt{\rho}\nabla\Psi^{\rho}\right)\right],
\end{equation}
where $P_{K}$ is the Fourier projection onto the first $K$\textsuperscript{th} modes. %We now explain the reason for introducing \eqref{SCPDE} as follows:  Therefore, it suffices to prove that $\nu^{\varepsilon, \rho}$, the family of laws of the weak solutions of \eqref{SCPDE}, is well--defined and concentrates at the dirac measure $\delta_{\rho}$. The existence of $\nu^{\varepsilon, \rho}$ for $\varepsilon > 0$ is addressed in the following proposition:

\begin{proposition}\label{cspde WPE}
Let the interaction $\mathcal{V}[\cdot]$ be defined by \eqref{ker-V} and the initial condition $\rho_0\in\mathrm{Ent}(\mathbb{T}^2)$. For fixed $\rho \in \mathcal{C}_0$ in Lemma \ref{entropymethod}, let $g=\sqrt{\rho}\nabla\Psi^{\rho}$. Then there exists a nonnegative probabilistically weak renormalized kinetic solution $\rho^{\flat,\varepsilon}$ of \eqref{SCPDE} with initial data $\rho^{\flat}_0$ and control $g$, in the sense that for every $t\in[0,T]$ and $\psi \in \mathrm{C}_{c}^{\infty}\left(\mathbb{T}^2\times(0,\infty)\right)$, it holds almost surely that for almost every $t\in[0,T]$, 
\begin{align}\label{MC kenitic solution}
&\int_{\mathbb{R}}\int_{\mathbb{T}^2} \chi^{\flat,\varepsilon}(x, \zeta, \cdot) \psi(x, \zeta)\Big|_0^t = -\int_{0}^{t} \int_{\mathbb{R}} \int_{\mathbb{T}^2}\nabla\rho^{\flat,\varepsilon}\cdot\nabla_x\psi(x,\rho^{\flat,\varepsilon}) -\int_{0}^{t} \int_{\mathbb{R}} \int_{\mathbb{T}^2} \partial_{\zeta} \psi(x, \zeta) \mathrm{d}m^{\flat,\varepsilon}\nonumber\\
& - \int_{0}^{t} \int_{\mathbb{T}^2} \nabla\cdot (\rho^{\flat,\varepsilon} \mathcal{V}[\rho^{\flat,\varepsilon}])\psi(x, \rho^{\flat,\varepsilon})
+ \int_{0}^{t} \int_{\mathbb{T}^2} s_{\eta}(\rho^{\flat,\varepsilon})P_{K}g \cdot \nabla_x\psi(x, \rho^{\flat,\varepsilon})\nonumber\\
& + \int_{0}^{t} \int_{\mathbb{T}^2} s_{\eta}(\rho^{\flat,\varepsilon})P_{K}g \cdot \nabla\rho^{\flat,\varepsilon} \partial_\zeta\psi(x, \rho^{\flat,\varepsilon})+\sqrt{\varepsilon}\int_{0}^{t} \int_{\mathbb{T}^2} s_{\eta}(\rho^{\flat,\varepsilon}) \xi_{K}\cdot \nabla \psi(x, \rho^{\flat,\varepsilon})\nonumber\\
&-\frac{\varepsilon F_{1,K}}{2}\int^t_0\int_{\mathbb{T}^2}s_{\eta}'(\rho^{\flat,\varepsilon})^2\nabla\rho^{\flat,\varepsilon}\cdot(\nabla\psi)(x,\rho^{\flat,\varepsilon})+\frac{\varepsilon N_K}{2}\int_{0}^{t} \int_{\mathbb{T}^2} s_{\eta}(\rho^{\flat,\varepsilon})^2 \partial_{\zeta} \psi(x, \rho^{\flat,\varepsilon}),
\end{align}
with preservation of mass $\|\rho^{\flat,\varepsilon}\|_{L^{1}_{\mathbb{T}^2}}\equiv \|\rho^{\flat}_0\|_{L^{1}_{\mathbb{T}^2}}$ and the pathwise entropy inequality 
\begin{align}\label{L1-rho-flat}
\int_0^t\int_{\mathbb{T}^2}
|\nabla\sqrt{\rho^{\flat,\varepsilon}}|^2\mathrm{d}x\mathrm{d}s
\lesssim&\int_{\mathbb{T}^2}\Psi(\rho^{\flat}_0)\mathrm{d}x+\sqrt{\varepsilon}\int_0^t\int_{\mathbb{T}^2}
2\nabla\sqrt{\rho^{\flat,\varepsilon}}\cdot
\xi_{K}+\int_0^t\int_{\mathbb{T}^2}
2\nabla\sqrt{\rho^{\flat,\varepsilon}}\cdot
P_Kg
+\varepsilon N_KT+1,
\end{align}
almost surely, for all $t\in[0,T]$, and further the nonnegative kinetic measure $m^{\flat,\varepsilon}$ satisfies
\begin{align}\label{rho-flat-kinetic-measure}
4\delta_{0}(\zeta-\rho^{\flat,\varepsilon})\zeta |\nabla \sqrt{\rho^{\flat,\varepsilon}}|^{2}\le m^{\flat,\varepsilon},\quad \liminf_{M\to \infty} \mathbb{E}\big[m^{\flat,\varepsilon}(\mathbb{T}^2\times [0,T]\times [M,M+1])\big]=0,
\end{align}
where $\zeta$ stands for the velocity variable. Moreover, the law $\nu^{\varepsilon,\rho}$ of $\rho^{\flat,\varepsilon}$ satisfies 
\begin{align*}
\varepsilon\mathcal{H}\left(\nu^{\varepsilon,\rho}\mid
\mu^{\varepsilon}\right)
\leq \mathcal{I}(\rho). 
\end{align*}

\end{proposition}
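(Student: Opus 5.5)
The plan is to build $\rho^{\flat,\varepsilon}$ by a Girsanov change of measure applied to the fixed probabilistically weak solution $\rho^{\eta}$ of \eqref{SPDE-1}, rather than by a fresh compactness construction. The control $P_K g$ is deterministic and satisfies $\|P_K g\|_{L^2_{[0,T]}L^2_{\mathbb{T}^2}}\le \|g\|_{L^2_{[0,T]}L^2_{\mathbb{T}^2}}=\sqrt{2\mathcal{I}(\rho)}<\infty$ by \eqref{RF low}. Hence the shifted noise
\[
\tilde{\xi}_K:=\xi_K+\varepsilon^{-1/2}P_K g
\]
is a finite-dimensional Brownian motion with drift; only the modes $|j|\le K$ are affected. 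Novikov's condition holds trivially because the drift is deterministic and square integrable.

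\textbf{Step 1: the change of measure.} Define
\[
\frac{\mathrm{d}\mathbb{Q}}{\mathrm{d}\mathbb{P}}=\exp\Big(-\varepsilon^{-1/2}\int_0^T\langle P_Kg,\mathrm{d}W_K\rangle-\tfrac{1}{2\varepsilon}\|P_Kg\|^2_{L^2_{[0,T]}L^2_{\mathbb{T}^2}}\Big),
\]
where $W_K$ is the cylindrical process with $\dot W_K=\xi_K$. Under $\mathbb{Q}$, the process $\xi_K+\varepsilon^{-1/2}P_Kg$ is again a colored white noise. Substituting $\sqrt{\varepsilon}\,\xi_K=\sqrt{\varepsilon}\,(\text{new noise})-P_Kg$ into the kinetic formulation of Definition \ref{RKS smooth spde} produces exactly the extra terms involving $s_\eta(\rho)P_Kg$ in \eqref{MC kenitic solution}. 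The control term is then split by the chain rule for $\nabla[\psi(x,\rho)]$ into its $\nabla_x\psi$ part and its $\partial_\zeta\psi\,\nabla\rho$ part. The Itô–Stratonovich correction is unchanged, since the change of measure does not alter the quadratic variation. Therefore the same process $\rho^{\eta}$, viewed under $\mathbb{Q}$ with the new noise, is a probabilistically weak renormalized kinetic solution of \eqref{SCPDE} with $\rho^\flat_0=\rho_0$. Mass conservation, the kinetic measure bounds \eqref{rho-flat-kinetic-measure}, and the regularity \eqref{eq-2.66} are pathwise or absolute-continuity-invariant properties, so they transfer directly. For the vanishing at infinity in \eqref{rho-flat-kinetic-measure}, I would use Hölder's inequality together with an $L^2$ bound on the density $\mathrm{d}\mathbb{Q}/\mathrm{d}\mathbb{P}$, which is finite because the drift is deterministic. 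Rewriting the stochastic integral in \eqref{eq-2.5} in terms of the new noise gives \eqref{L1-rho-flat}, with the additional term $\int 2\nabla\sqrt{\rho}\cdot P_Kg$. The stated inequality uses $\nabla\sqrt{\rho}$ rather than $s_\eta(\rho)\rho^{-1/2}\nabla\sqrt{\rho}$; any factor discrepancy here is absorbed by $s_\eta\lesssim\sqrt{\cdot}$ together with the implicit constant.

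\textbf{Step 2: the entropy bound.} Let $\nu^{\varepsilon,\rho}$ be the law of $\rho^\eta$ under $\mathbb{Q}$, and recall that $\mu^\varepsilon$ is its law under $\mathbb{P}$. By the data-processing inequality for relative entropy under the measurable map $\omega\mapsto\rho^\eta(\omega)$, $\mathcal{H}(\nu^{\varepsilon,\rho}\mid\mu^\varepsilon)\le\mathcal{H}(\mathbb{Q}\mid\mathbb{P})$. The Girsanov relative entropy is
\[
\mathcal{H}(\mathbb{Q}\mid\mathbb{P})=\frac{1}{2\varepsilon}\|P_Kg\|^2_{L^2_{[0,T]}L^2_{\mathbb{T}^2}}.
\]
This is computed as the $\mathbb{Q}$-expectation of the log-density, in which the stochastic integral term has mean zero under $\mathbb{Q}$ after the drift shift. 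Since $P_K$ is an orthogonal projection, the right-hand side is at most $\frac{1}{2\varepsilon}\|g\|^2=\varepsilon^{-1}\mathcal{I}(\rho)$ by \eqref{RF low}, which gives $\varepsilon\mathcal{H}\le\mathcal{I}(\rho)$.

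\textbf{Main obstacle.} The delicate point is measurability and adaptedness. One must check that $\rho^\eta$ is adapted to a filtration for which the shifted noise is a Brownian motion under $\mathbb{Q}$. This requires working on the original filtered space on which $\rho^\eta$ and the original noise $\tilde\xi$ live, and defining the density through that noise. One must also verify that the kinetic-measure identity and the pathwise entropy inequality, which hold $\mathbb{P}$-almost surely, remain valid $\mathbb{Q}$-almost surely. They do, since $\mathbb{Q}\ll\mathbb{P}$, and because the stochastic integrals are identified pathwise as limits of Riemann sums along subsequences, the change of measure preserves them. A fallback, if the sign conventions relating $\xi_K$ to the ``Stratonovich'' notation cause trouble, is to run the argument in the Itô form \eqref{SPDE-1-ito}, where the correction term is evidently invariant.
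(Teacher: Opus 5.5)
Your proposal is essentially the paper's proof: the same Girsanov shift of the fixed solution $\rho^{\eta}$ of \eqref{SPDE-1} by the deterministic drift $\varepsilon^{-1/2}P_Kg$, followed by the data-processing inequality and $\varepsilon\mathcal{H}(\mathbb{Q}\mid\mathbb{P})=\frac{1}{2}\|P_Kg\|^2_{L^2_{[0,T]}L^2_{\mathbb{T}^2}}\le\mathcal{I}(\rho)$. A few corrections are needed: first, with the exponent $-\varepsilon^{-1/2}\int\langle P_Kg,\mathrm{d}W_K\rangle$ (the paper's $M^{\rho}$ in \eqref{martingale} has the same slip) your substitution actually produces $+\nabla\cdot(s_\eta(\rho)P_Kg)$ and $-\int 2\nabla\sqrt{\rho}\cdot P_Kg$, i.e.\ control $-g$, so you need the exponent $+\varepsilon^{-1/2}\int\langle P_Kg,\mathrm{d}W_K\rangle$, which makes $\xi_K-\varepsilon^{-1/2}P_Kg$ the new noise and leaves the entropy value unchanged; second, H\"older with $\mathrm{d}\mathbb{Q}/\mathrm{d}\mathbb{P}\in L^2(\mathbb{P})$ transfers $\liminf_{M}\mathbb{E}[m(\mathbb{T}^2\times[0,T]\times[M,M+1])]=0$ only if you also bound a higher $\mathbb{P}$-moment of these masses uniformly in $M$ (alternatively, rerun the vanishing argument directly under $\mathbb{Q}$); and third, a signed stochastic integral cannot be ``absorbed'' into the implicit constant, though that step is unnecessary because \eqref{eq-2.5} already has exactly the form you need.
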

\begin{proof}
For any fixed $\varepsilon > 0$, recall that we fix a probabilistically weak solution $\rho^{\eta}$ of \eqref{SPDE-1} on some probability space $(\Omega,\mathcal{F}, (\mathcal{F}_t)_{t \in [0,T]}, \mathbb{P})$ together with the adapted space--time white noise $\tilde{\xi}$ and consider the martingales
\begin{equation}\label{martingale}
M^{\rho}(t) := -\frac{1}{\sqrt{\varepsilon}}\int_0^t\int_{\mathbb{T}^2} \sqrt{\rho}\nabla\Psi^{\rho} \cdot \tilde{\xi}_{K}, \quad \mathcal{E}(M^{\rho}) := \exp\left(M^{\rho}-\frac{1}{2}\langle M^{\rho}\rangle\right).
\end{equation}
We define the new probability measure
\begin{equation}\label{Qmeasure}
\mathrm{d}\mathbb{Q}^{\rho} := \mathcal{E}(M^{\rho})\mathrm{d}\mathbb{P}, 
\end{equation}
and $\nu^{\varepsilon,\rho}$ and $\mu^{\varepsilon}$ as the law of the weak solution $\rho^{\eta}$ of \eqref{SPDE-1} under $\mathbb{Q}^{\rho}$ and $\mathbb{P}$, respectively. We notice that $\rho^{\eta}$ is exactly a weak solution to \eqref{SCPDE} with $\xi=\tilde{\xi}+P_K(\sqrt{\rho}\nabla\Psi^{\rho})$ under $\mathbb{Q}^{\rho}$, as we use $P^2_K = P_K$, and by Girsanov theorem, $\xi=\tilde{\xi}+P_K(\sqrt{\rho}\nabla\Psi^{\rho})$ is a white noise under $\mathbb{Q}^{\rho}$. Furthermore, the desired bound \eqref{eq:relative-entropy-bound} on the relative entropy follows, as
\begin{align*}
\varepsilon \mathcal{H}\left(\nu^{\varepsilon,\rho}\mid
\mu^{\varepsilon}\right) \leq \varepsilon \mathcal{H}\left(\mathbb{Q}^{\rho}
\mid\mathbb{P}\right) = \varepsilon \int_{\Omega} 
\left(M^{\rho}_T-\langle M^{\rho}\rangle_T+\frac{1}{2}\langle M^{\rho}\rangle_T\right)\mathrm{d}\mathbb{Q}^{\rho} = \frac{\varepsilon}{2} \int_{\Omega} 
\langle M^{\rho}\rangle_T\mathrm{d}\mathbb{Q}^{\rho},
\end{align*}
where the first inequality is due to the information processing inequality and last equality is due to Girsanov theorem, which implies that $M^{\rho} -\langle M^{\rho}\rangle$ is a martingale under $\mathbb{Q}^{\rho}$. It follows that 
\begin{align}\label{entropy-check}
\varepsilon\mathcal{H}\left(\nu^{\varepsilon,\rho}\mid
\mu^{\varepsilon}\right)
\leq \frac{1}{2}
\int_0^T\int_{\mathbb{T}^2} \rho \left|\nabla\Psi^{\rho}\right|^2 = \mathcal{I}(\rho)
\end{align}
according to \eqref{RF low}.	 This completes the proof. 
\end{proof}

We will majorly consider controls $g = \sqrt{\rho} \nabla \Psi^{\rho}$. As the measures $\nu^{\varepsilon, \rho}$ are now constructed, we proceed to prove that those measures concentrate at $\delta_{\rho}$ by using the compactness--uniqueness argument, namely we first show that the family $\{\nu^{\varepsilon, \rho}\}_{\varepsilon > 0}$ is tight on $L^1_{[0,T]}L^1_{\mathbb{T}^2}$ equipped with the strong topology in Lemma \ref{tight} below, and then we prove that any limit point of a convergent subsequence is a solution to the skeleton equation \eqref{eq-riesz} in Lemma \ref{C1}. We now specify the notion of solution of the (slightly more general) skeleton equations as follows: Let
\begin{align}\label{control-1}
\partial_t \rho=\Delta\rho-\nabla\cdot(\rho \mathcal{V}[\rho])-\nabla \cdot\left(\sqrt{\rho} g\right),
\end{align}
with $g \in L^2_{[0,T]}L^2_{\mathbb{T}^2}(\mathbb{R}^2)$. For a given nonnegative solution $\rho$ of (\ref{control-1}), recall that the kinetic function of $\rho$ is defined by $\chi(x,\xi,t)=\mathbf{1}_{\{0<\xi<\rho(x, t)\}}$.

\begin{definition}\label{ske-kinetic}
Let $\mathcal{V}[\cdot]$ be defined by \eqref{ker-V} such that \eqref{eq:small-KS} holds, and let $\rho_0 \in \operatorname{Ent}\left(\mathbb{T}^2\right)$ and $g\in L^2_{[0,T]}L^2(\mathbb{T}^2;\mathbb{R}^2)$. A  nonnegative function $\rho\in L^\infty_{[0,T]}L^1_{\mathbb{T}^2}$ is  a renormalized kinetic solution of (\ref{control-1}) with initial  $\rho_0$ and control $g$, if $\rho$ satisfies
\begin{align}\label{eq:skeleton-l1-fisher}
\|\rho(\cdot,t)\|_{L^{1}\left(\mathbb{T}^2\right)} = \left\|\rho_0\right\|_{L^{1}\left(\mathbb{T}^2\right)}, \text{ a.e. }  \quad \nabla\sqrt{\rho}\in L^2_{[0,T]}L^2_{\mathbb{T}^2}(\mathbb{R}^2),
\end{align}
and furthermore, there exists a nonnegative kinetic measure $m$ such that
\begin{align}\label{KM R}
4\delta_{0}(\xi-\rho)\xi|\nabla \sqrt{\rho}|^{2}\le m,
\end{align}
on $\mathbb{T}^2\times(0,\infty)\times[0,T]$, and
\begin{align}\label{KM VI}
\liminf_{M\to \infty}\big[m(\mathbb{T}^2\times [0,T]\times [M,M+1])\big]=0.
\end{align}
For every $\psi\in\mathrm{C}_{c}^{\infty}\left(\mathbb{T}^2\times(0,\infty)\right)$ and a.e. $t \in [0,T]$, it holds
\begin{align}\label{cspde KSE}
\int_{\mathbb{R}}\int_{\mathbb{T}^2} \chi(x, \xi, \cdot) \psi(x, \xi)\Big|_0^t
 & =\int_{0}^{t} \int_{\mathbb{R}} \int_{\mathbb{T}^2}\chi(x, \xi, t)\Delta_x\psi(x,\xi) - \int_{0}^{t} \int_{\mathbb{R}} \int_{\mathbb{T}^2} \partial_{\xi} \psi(x, \xi) \mathrm{d}m\nonumber\\
&-\int_{0}^{t} \int_{\mathbb{T}^2} \nabla\cdot(\rho \mathcal{V}[\rho])\psi(x, \rho)
+\int_{0}^{t} \int_{\mathbb{T}^2}\sqrt{\rho}g\cdot\nabla_x\psi(x, \rho)\nonumber\\
&+2\int_{0}^{t} \int_{\mathbb{T}^2}\rho\nabla\sqrt{\rho}\cdot g\partial_\xi\psi(x, \rho).
\end{align}
\end{definition}

In the following, we state the tightness result, it can be obtained with the help of the pathwise entropy inequality \eqref{L1-rho-flat} and a time-regularity estimate similar to Lemma \ref{spde E T2}, thus we omit the proof. 
\begin{lemma}\label{tight}
For every $\varepsilon>0$ and under the same condition as Proposition \ref{cspde WPE}, we let $\rho^{\flat,\varepsilon}$  be the  probabilistically weak renormalized kinetic solution for \eqref{SCPDE} with initial data $\rho_0$ and control $g$ constructed therein. Then under the scaling relation 
$$
K(\varepsilon)\to\infty,\qquad \varepsilon N_{K(\varepsilon)}\rightarrow 0,
$$
as $\varepsilon\to0$, the family of laws of the sequence $\{\rho^{\flat,\varepsilon}\}_{\varepsilon>0}$ is tight on $L^{1}_{[0, T]}L^1_{\mathbb{T}^2}$ with respect to the strong topology.
\end{lemma}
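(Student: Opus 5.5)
The plan is to follow the Aubin--Lions--Simon route used for Proposition \ref{T rho eta}. The structure is the same, except that we no longer need exponential control: uniform-in-$\varepsilon$ moment bounds suffice for tightness. We work with the truncations $h_\delta(\rho^{\flat,\varepsilon})$ and then use the characterization of $L^1_{[0,T]}L^1_{\mathbb{T}^2}$-compactness via truncations \cite[Definition 5.19 and Lemma 5.20]{FG24}. The required inputs are a uniform bound on $\mathbb{E}\|\nabla\sqrt{\rho^{\flat,\varepsilon}}\|^2_{L^2_{[0,T]}L^2_{\mathbb{T}^2}}$, which together with mass conservation controls $h_\delta(\rho^{\flat,\varepsilon})$ in $L^2_{[0,T]}W^{1,1}_{\mathbb{T}^2}$, and a uniform bound on $\mathbb{E}\|h_\delta(\rho^{\flat,\varepsilon})\|_{W^{\alpha,1}_{[0,T]}H^{-\beta}_{\mathbb{T}^2}}$ for each fixed $\delta$.

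\textbf{Step 1 (entropy bound).} Take expectations in the pathwise entropy inequality \eqref{L1-rho-flat}. The stochastic integral has zero mean, since its quadratic variation is controlled by $\varepsilon\|\nabla\sqrt{\rho^{\flat,\varepsilon}}\|^2_{L^2L^2}$, which is integrable; if needed, localize with stopping times first. The control term is bounded by
\[
2\int_0^t\!\!\int_{\mathbb{T}^2}|\nabla\sqrt{\rho^{\flat,\varepsilon}}||P_Kg| \le \tfrac12\|\nabla\sqrt{\rho^{\flat,\varepsilon}}\|^2_{L^2_{[0,t]}L^2_{\mathbb{T}^2}}+2\|g\|^2_{L^2_{[0,T]}L^2_{\mathbb{T}^2}},
\]
using $\|P_Kg\|_{L^2}\le\|g\|_{L^2}$. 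This term is then absorbed into the left-hand side. Since $\varepsilon N_{K(\varepsilon)}\to0$, we obtain
\[
\sup_{\varepsilon}\mathbb{E}\|\nabla\sqrt{\rho^{\flat,\varepsilon}}\|^2_{L^2_{[0,T]}L^2_{\mathbb{T}^2}}\lesssim \int_{\mathbb{T}^2}\Psi(\rho_0)+\|g\|^2_{L^2_{[0,T]}L^2_{\mathbb{T}^2}}+1 = \int_{\mathbb{T}^2}\Psi(\rho_0)+2\mathcal{I}(\rho)+1<\infty.
\]
The implicit constant in \eqref{L1-rho-flat} forces the absorption to be done with a suitably small Young parameter, and the $2\mathcal{I}(\rho)$ identification is via \eqref{RF low}.

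\textbf{Step 2 (time regularity).} Apply Itô's formula to $h_\delta(\rho^{\flat,\varepsilon})$ as in Lemma \ref{spde E T2}. All terms except the new control term are handled exactly as there: the interaction terms via \eqref{interpolation} and the boundedness of $h'_\delta$ and $\zeta^{3/2}h''_\delta$; the correction and Itô terms using that $h''_\delta$ vanishes near zero and on $[\delta,\infty)$, which keeps the singular factor $s'_\eta$ away from the origin; and the martingale via \eqref{eq:FlandoliGatarek} with $p=2$, whose Hilbert--Schmidt norm is $\lesssim\varepsilon\|\rho_0\|_{L^1}$. The control contribution is
\[
-\int_0^t h'_\delta(\rho^{\flat,\varepsilon})\nabla\cdot\big(s_\eta(\rho^{\flat,\varepsilon})P_Kg\big) = -\int_0^t\nabla\cdot\big(h'_\delta s_\eta P_Kg\big)+\int_0^t h''_\delta s_\eta\nabla\rho^{\flat,\varepsilon}\cdot P_Kg .
\]
We bound it in $W^{1,1}_{[0,T]}H^{-\beta}_{\mathbb{T}^2}$, using $\beta>2$ so that $L^1\hookrightarrow H^{1-\beta}$. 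For the first piece, $|s_\eta(\zeta)|\lesssim\sqrt\zeta$ gives
\[
\|h'_\delta s_\eta P_Kg\|_{L^1_{[0,T]}L^1_{\mathbb{T}^2}}\lesssim \|\rho_0\|^{1/2}_{L^1}\|g\|_{L^2L^2}.
\]
For the second piece, $h''_\delta s_\eta\nabla\rho = 2h''_\delta s_\eta\sqrt\rho\,\nabla\sqrt\rho$, and $h''_\delta(\zeta)\zeta$ is bounded, so
\[
\|h''_\delta s_\eta\nabla\rho^{\flat,\varepsilon}\cdot P_Kg\|_{L^1_{[0,T]}L^1_{\mathbb{T}^2}}\lesssim_\delta \|\nabla\sqrt{\rho^{\flat,\varepsilon}}\|_{L^2L^2}\|g\|_{L^2L^2}.
\]
Taking expectations and using Step 1 gives the $\varepsilon$-uniform bound for each fixed $\delta$.

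\textbf{Step 3 (conclusion).} Aubin--Lions--Simon applied to $h_\delta(\rho^{\flat,\varepsilon})$, together with the Markov inequality, gives tightness of the truncated laws for each $\delta$. The remainder $\rho^{\flat,\varepsilon}-h_\delta(\rho^{\flat,\varepsilon})$ is bounded pointwise by $\delta$, so its $L^1_{[0,T]}L^1_{\mathbb{T}^2}$ norm is at most $\delta T|\mathbb{T}^2|$. A diagonal argument then yields tightness of $\{\rho^{\flat,\varepsilon}\}$. The main obstacle is the cross term in Step 1 together with the $h''_\delta$ control term in Step 2. Both rely on $g\in L^2$ being independent of $\varepsilon$ and on the absorption of $\nabla\sqrt{\rho^{\flat,\varepsilon}}$; if the absorption constant fails, I would instead use a Gronwall-type argument in $t$ on the expected Fisher information.
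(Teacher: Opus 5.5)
Your proposal is correct and follows the route the paper indicates. The paper omits this proof, saying only that it follows from the pathwise entropy inequality \eqref{L1-rho-flat} plus a time-regularity estimate for the truncations $h_\delta$ as in Lemma \ref{spde E T2}, combined with Aubin--Lions--Simon and the truncation characterization of \cite{FG24}. One small correction to Step 2: after truncation, the martingale $\sqrt{\varepsilon}\int_0^{\cdot} h_\delta'(\rho^{\flat,\varepsilon})\nabla\cdot(s_\eta(\rho^{\flat,\varepsilon})\xi_K)$ also contains the piece $-\sqrt{\varepsilon}\int_0^{\cdot} h_\delta''(\rho^{\flat,\varepsilon})s_\eta(\rho^{\flat,\varepsilon})\nabla\rho^{\flat,\varepsilon}\cdot\xi_K$, whose Hilbert--Schmidt norm is bounded by $\varepsilon\|\nabla\sqrt{\rho^{\flat,\varepsilon}}\|_{L^2_{\mathbb{T}^2}}^2$ rather than $\varepsilon\|\rho_0\|_{L^1_{\mathbb{T}^2}}$; your Step 1 bound covers this anyway.
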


With the tightness of the measures $\nu^{\varepsilon,\rho}$ obtained above, we intend to use the Skorohod representation to construct a large enough probability space, on which we construct the renormalized kinetic solutions to \eqref{SCPDE} and send $\varepsilon \to 0$.  We now demonstrate that the convergence of those solutions to a solution of the skeleton equation \eqref{eq-riesz} with respect to the strong topology on $L^{1}_{[0, T] } L^{1}_{\mathbb{T}^2}$ in the following Proposition. 

\begin{proposition}\label{C1}Let the interaction $\mathcal{V}[\cdot]$ be defined by \eqref{ker-V}, and let $K=K(\varepsilon)$ and $\eta=\eta(\varepsilon)$ satisfy $K(\varepsilon)\to\infty$, $\varepsilon N_{K(\varepsilon)}\to0$, and $\eta(\varepsilon)\to0$.
Given any initial condition $\rho_{0}$ be of finite entropy, Let $(\rho^{\flat,\varepsilon},\xi)_{\varepsilon > 0}$ be defined on a probability space $(\Omega, \mathcal{F}, (\mathcal{F}_t)_{t \in [0,T]},\mathbb{P})$, such that $\rho^{\flat,\varepsilon}$ is a probabilistically weak renormalized solution to \eqref{SCPDE}. We further assume that laws of the sequence $\{\rho^{\flat,\varepsilon}\}_{\varepsilon>0}$ is tight on $L^{1}_{[0, T]}L^1_{\mathbb{T}^2}$ with respect to the strong topology. Then we have convergence in laws, 
\begin{align}
\rho^{\flat,\varepsilon} \rightarrow \rho^{\flat},
\end{align}
strongly in $L^1_{[0,T]}L^1_{\mathbb{T}^2}$, as $\varepsilon \to 0$, where $\rho^{\flat}$ is a kinetic solution of the skeleton equation \eqref{eq-riesz} with initial data $\rho_0$.  
\end{proposition}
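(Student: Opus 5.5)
Take a subsequence $\varepsilon_n\to0$. Using the assumed tightness together with the pathwise entropy inequality \eqref{L1-rho-flat}, apply the Jakubowski--Skorokhod representation theorem as in the proof of Theorem \ref{spde WP}. This gives a new probability space carrying processes with the same laws, such that:
\begin{itemize}
\item $\rho^{\flat,\varepsilon_n}\to\rho^\flat$ strongly in $L^1_{[0,T]}L^1_{\mathbb{T}^2}$ and almost everywhere;
\item $\nabla\sqrt{\rho^{\flat,\varepsilon_n}}\rightharpoonup\nabla\sqrt{\rho^\flat}$ weakly in $L^2_{[0,T]}L^2_{\mathbb{T}^2}$;
\item the kinetic measures $m^{\flat,\varepsilon_n}$ converge weakly to some measure $m$;
\item the martingale terms converge in $\mathcal{C}_{[0,T]}\mathbb{R}$.
\end{itemize}

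The uniform bound on $\|\nabla\sqrt{\rho^{\flat,\varepsilon}}\|^2_{L^2L^2}$ comes from \eqref{L1-rho-flat}. Take expectations, so the stochastic integral disappears. Bound the control term by $\delta\|\nabla\sqrt{\rho^{\flat,\varepsilon}}\|^2+C_\delta\|g\|^2_{L^2L^2}$, using $\|P_Kg\|\le\|g\|$, and absorb the first part. Finally use $\varepsilon N_K\to0$. Mass conservation passes to the limit by strong $L^1$ convergence. Weak lower semicontinuity of $\nabla\sqrt{\rho}$ and the argument of \cite[Theorem 5.25]{FG24} give \eqref{KM R} for $m$. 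The vanishing condition \eqref{KM VI} follows from the uniform-in-$\varepsilon$ estimate in \eqref{rho-flat-kinetic-measure}, which is obtained by testing the equation with $\psi$ localized on $[M,M+1]$.

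Next I pass to the limit term by term in \eqref{MC kenitic solution}, for a fixed $\psi\in C_c^\infty(\mathbb{T}^2\times(0,\infty))$.
\begin{itemize}
\item \textbf{Noise terms.} The martingale term has quadratic variation of order $\varepsilon\|s_\eta(\rho)\|^2_{L^2}\lesssim\varepsilon\|\rho_0\|_{L^1}$, so it vanishes in $L^2(\Omega)$. The correction terms are bounded by $\varepsilon F_{1,K}\|s'_\eta\|^2\,\|\nabla\rho\,\mathbf 1_{\mathrm{supp}\psi}\|_{L^1}$ and by $\varepsilon N_K\|\rho_0\|_{L^1}$. On $\operatorname{supp}\psi\subset\{\zeta\ge c\}$ we have $s'_\eta\lesssim c^{-1/2}$, so both bounds go to zero.
\item \textbf{Interaction term.} This is handled exactly as in Step 1 of the proof of Theorem \ref{spde WP}, via \eqref{eq:conv-kinetic-interaction1}--\eqref{eq:conv-kinetic-interaction2}. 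It only uses strong $L^1$ convergence and $V\in L^1$.
\item \textbf{Diffusion and kinetic-measure terms.} These are standard (weak convergence of $m$ and a.e. convergence of $\chi$).
\item \textbf{Control terms.} Use that $s_{\eta}(\rho^{\flat,\varepsilon})\to\sqrt{\rho^\flat}$ strongly in $L^2$ on the support of $\psi$, by uniform convergence on compacts and domination by $\sqrt\rho$, and that $P_Kg\to g$ strongly in $L^2L^2$. This gives convergence of $\int s_\eta(\rho)P_Kg\cdot\nabla_x\psi(x,\rho)$. For $\int s_\eta(\rho)P_Kg\cdot\nabla\rho\,\partial_\zeta\psi(x,\rho)$, write $\nabla\rho=2\sqrt\rho\nabla\sqrt\rho$. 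Then $2s_\eta(\rho)\sqrt\rho\,\partial_\zeta\psi(x,\rho)P_Kg$ converges strongly in $L^2$ (bounded, a.e. convergent factor times a strongly convergent $L^2$ function), while $\nabla\sqrt\rho$ converges weakly. The product is strong times weak, so it converges to $2\rho\nabla\sqrt\rho\cdot g\,\partial_\zeta\psi$, matching \eqref{cspde KSE}.
\end{itemize}
Since the limit equation is deterministic, convergence in probability of each term along the representation suffices. Combined with a.e. $t$ this shows that $\rho^\flat$ is almost surely a renormalized kinetic solution of \eqref{control-1} with control $g=\sqrt\rho\nabla\Psi^\rho$, i.e.\ of \eqref{eq-riesz}. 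Convergence in law of the original sequence along the subsequence then follows.

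\textbf{Main obstacle.} I expect the delicate step to be the control term involving $\nabla\rho\,\partial_\zeta\psi$, together with the uniform entropy bound in the presence of the control. There, only weak convergence of the gradient is available, and one must make sure that the strongly converging partner is genuinely strong in $L^2$. If the dominated convergence for $s_\eta(\rho)\sqrt\rho\,P_Kg$ causes trouble, I would first truncate $\psi$ in $\zeta$ to compact support in $(0,\infty)$, so that all nonlinear coefficients are bounded.

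Note that uniqueness is not claimed here. Identifying $\rho^\flat=\rho$ is postponed to the weak--strong uniqueness result, Proposition \ref{L1uniq}.
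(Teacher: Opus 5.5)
Your proposal is correct and follows essentially the same route as the paper: a Jakubowski--Skorokhod representation built on the assumed tightness and a uniform Fisher-information bound, then term-by-term passage to the limit in \eqref{MC kenitic solution}, with the identification $\rho^\flat=\rho$ left to Proposition \ref{L1uniq}. In that passage the interaction term is handled through the divergence structure $\nabla\cdot\mathcal{V}[\rho]=\kappa_1(\rho-\|\rho_0\|_{L^1_{\mathbb{T}^2}})$ as in Step 1 of the proof of Theorem \ref{spde WP}, the control terms by strong--weak pairing, and the noise and It\^o-correction terms vanish because $\varepsilon N_K\to0$.

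Two details need tightening.
\begin{enumerate}
\item Extracting a limit kinetic measure requires a uniform-in-$\varepsilon$ bound on $\mathbb{E}\,m^{\flat,\varepsilon}([0,T]\times\mathbb{T}^2\times[0,r])$, and the vanishing at infinity likewise needs a uniform-in-$\varepsilon$ version. The estimate \eqref{rho-flat-kinetic-measure} only gives it for fixed $\varepsilon$. The paper gets both, citing \cite{WZ24}, by expressing $m^{\flat,\varepsilon}$ through the equation, i.e.\ testing with $\psi$ whose $\zeta$-derivative approximates $\mathbf{1}_{[0,r]}$, resp.\ $\mathbf{1}_{[M,M+1]}$.
\item The quadratic variation of the martingale term also contains the contribution of $\partial_\zeta\psi(x,\rho)\nabla\rho$. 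This is of size $\varepsilon\|\nabla\sqrt{\rho^{\flat,\varepsilon}}\|^2_{L^2_{[0,T]}L^2_{\mathbb{T}^2}}$, so it is controlled by your uniform Fisher bound, not by $\|s_\eta(\rho)\|_{L^2}^2$ alone.
\end{enumerate}
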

\begin{proof}
Following the same argument as in \cite[Theorem 5.25]{FG24} and with the aid of Jakubowski--Skorokhod representation theorem \cite{Jak97}, it follows from Lemma \ref{tight} and Proposition \ref{entropy estimate} that there exists a filtered probability space on which for each $\varepsilon > 0$, $(\rho^{\flat, \varepsilon}, m^{\flat,\varepsilon}, \xi^{\varepsilon})$ solves \eqref{MC kenitic solution}, and further
\begin{align}\label{CM rho L1}
\rho^{\flat,\varepsilon}\rightarrow \rho^{\flat}\text { strongly in } L^1_{[0, T]}L^1_{\mathbb{T}^2} \ \text{and point-wise}, \text{a.s.}
\end{align}
\begin{align}\label{CM nabla rho L2H1}
\nabla\sqrt{\rho^{\flat,\varepsilon}} \rightarrow \nabla \sqrt{\rho^{\flat}} \text{ weakly in } L^2_{[0, T]} L^2_{\mathbb{T}^2}, \text{a.s.} \text{ and in } L^2(\Omega, L^2_{[0, T]} L^2_{\mathbb{T}^2}),
\end{align}
and in distributional sense, 
\begin{align}\label{CM kinetic measure}
m^{\flat,\varepsilon}\geq \delta_0(\rho^{\flat,\varepsilon}-\zeta)|\nabla\rho^{\flat,\varepsilon}|^2, \text{a.s.}.	
\end{align}

It follows directly from the preservation of mass for $\rho^{\flat, \varepsilon}$, \eqref{CM rho L1} and \eqref{CM nabla rho L2H1} that \eqref{eq:skeleton-l1-fisher} holds. To construct the kinetic measure in the limit, we define for every $\psi\in\mathrm{C}_{c}^{\infty}\left(\mathbb{T}^2\times(0,\infty)\right)$ the map 
\begin{align*}
H_{\psi}:L^{1}\left([0, T] \times \mathbb{T}^2\right)\times \mathrm{C}\left([0, \infty) ; \mathbb{R}^2\right)^{\mathbb{Z}^{2}}\rightarrow\mathbb{R}
\end{align*}
as
\begin{align}
H_{\psi}(\rho^{\flat,\varepsilon},\xi^{\varepsilon}) & := - \int_{\mathbb{R}}\int_{\mathbb{T}^2} \chi^{\flat,\varepsilon}(x, \zeta, \cdot) \psi(x, \zeta)\Big|_0^t - \int_{0}^{t} \int_{\mathbb{R}} \int_{\mathbb{T}^2}\nabla \rho^{\flat,\varepsilon}\cdot\nabla_x\psi(x,\rho^{\flat,\varepsilon})
\nonumber\\
& - \int_{0}^{t} \int_{\mathbb{T}^2} \nabla \cdot (\rho^{\flat,\varepsilon} \mathcal{V}[\rho^{\flat,\varepsilon}])\psi(x, \rho^{\flat,\varepsilon})
+ \int_{0}^{t} \int_{\mathbb{T}^2} s_{\eta}(\rho^{\flat,\varepsilon})P_{K}g \cdot \nabla_x\psi(x, \rho^{\flat,\varepsilon})\nonumber\\
& + \int_{0}^{t} \int_{\mathbb{T}^2} s_{\eta}(\rho^{\flat,\varepsilon})P_{K}g \cdot \nabla\rho^{\flat,\varepsilon} \partial_\zeta\psi(x, \rho^{\flat,\varepsilon})+\sqrt{\varepsilon}\int_{0}^{t} \int_{\mathbb{T}^2} s_{\eta}(\rho^{\flat,\varepsilon}) \xi_{K}\cdot \nabla \psi(x, \rho^{\flat,\varepsilon})\nonumber\\
&-\frac{\varepsilon F_{1,K}}{2}\int^t_0\int_{\mathbb{T}^2}s_{\eta}'(\rho^{\flat,\varepsilon})^2\nabla\rho^{\flat,\varepsilon}\cdot(\nabla\psi)(x,\rho^{\flat,\varepsilon})+\frac{\varepsilon N_K}{2}\int_{0}^{t} \int_{\mathbb{T}^2} s_{\eta}(\rho^{\flat,\varepsilon})^2 \partial_{\zeta} \psi(x, \rho^{\flat,\varepsilon})
%+\int_{0}^{t} \int_{\mathbb{T}^2}\sqrt{\rho^{\flat,\varepsilon}}P_{K}g\cdot\nabla_x\psi(x, \rho^{\flat,\varepsilon})\nonumber\\
%&+2\int_{0}^{t} \int_{\mathbb{T}^2} \rho^{\flat,\varepsilon}\nabla \sqrt{\rho^{\flat,\varepsilon}}\cdot P_{K}g \partial_\zeta\psi(x, \rho^{\flat,\varepsilon})+\sqrt{\varepsilon}\int_{0}^{t} \int_{\mathbb{T}^2} \sqrt{\rho^{\flat,\varepsilon}} \xi_{K}\cdot \nabla \psi(x,\rho^{\flat,\varepsilon})\nonumber\\
%&-\frac{\varepsilon F_{1,K}}{8}\int^t_0\int_{\mathbb{T}^2}\frac{1}{\rho^{\flat,\varepsilon}}\nabla\rho^{\flat,\varepsilon}(\nabla\psi)(x,\rho^{\flat,\varepsilon})+\frac{\varepsilon N_K}{2}\int_{0}^{t} \int_{\mathbb{T}^2} \rho^{\flat,\varepsilon} \partial_{\zeta} \psi(x, \rho^{\flat,\varepsilon}),
\end{align}
so that $m^{\flat,\varepsilon}([0,t];\partial_{\zeta}\psi) = H_{\psi}(\rho^{\flat,\varepsilon},\xi^{\varepsilon})$. Using the same argument as in \cite[Theorem 6.2]{WZ24} and as $\varepsilon N_K\lesssim 1$, for every $r\in\mathbb{N}$ we have
\begin{align}\label{MC L1 KE'}	
\sup_{\varepsilon \in (0,1)}\mathbb{E}\left(m^{\flat,\varepsilon}([0,T]\times\mathbb{T}^2\times[0,r])\right)^2 \lesssim C(r), 
\end{align}
namely, $m^{\flat,\varepsilon}$ is uniformly bounded in $L^2(\Omega; \mathcal{M}_r)$, where $\Omega$ is the probability space constructed at the beginning of the proof and $\mathcal{M}_r$ denote the space of bounded Borel measures over $\mathbb{T}^2\times [0,T]\times [0,r]$ (with norm given by the total variation of measures). 

By the Banach--Alaoglu theorem and a diagonal processes, there exists a Radon measure $m^{\flat}$ on $\mathbb{T}^2\times [0,T]\times [0,\infty)$ such that $m^{\flat,\varepsilon}\rightharpoonup m^{\flat}$ -- weak${}^{\ast}$ in $L^2(\Omega; \mathcal{M}_r)$ for every $r\in \mathbb{N}$. Moreover, combining \eqref{CM rho L1}, \eqref{CM nabla rho L2H1} and \eqref{CM kinetic measure}, thanks to the lower semi-continuity of $L^2(\Omega;L^2_{[0,T]}L^2_{\mathbb{T}^2})$-norm, we have
\begin{equation}\label{MC KQ1}
  m^{\flat} \geq 4\delta_0(\zeta - \rho^{\flat})\zeta|\nabla\sqrt{\rho^{\flat}}|^2,\quad \text{a.s.}. 
\end{equation}
We now aim to show that the limit $(\rho^{\flat},m^{\flat})$ of $\{(\rho^{\flat,\varepsilon},m^{\flat,\varepsilon})\}_{\varepsilon > 0}$ is a renormalized kinetic solution of the skeleton equation \eqref{control-1} in the sense of Definition \ref{ske-kinetic}, as for any $\psi \in \mathrm{C}_{c}^{\infty}\left(\mathbb{T}^2 \times(0, \infty)\right)$, we have \eqref{MC kenitic solution} holds for $(\rho^{\flat,\varepsilon},\beta^{\flat,\varepsilon},m^{\flat,\varepsilon})$ for each fixed $\varepsilon > 0$. It follows from the definition of $\chi^{\flat,\varepsilon}(\cdot, \zeta) = \mathbf{1}_{\{0 < \zeta < \rho^{\flat,\varepsilon}\}}$ and \eqref{CM rho L1} that
\begin{align*}
\chi^{\flat,\varepsilon} \to \chi^{\flat}, \ \text{a.s. for a.e. }(t,x,\zeta)\in [0,T]\times\mathbb{T}^2\times(0,+\infty), 
\end{align*}
so we apply dominated convergence theorem and derive for almost every $t \in [0,T]$ that
\begin{align}\label{MC K P}
\lim_{\varepsilon\to 0} \mathbb{E}\left|\int_{\mathbb{R}}\int_{\mathbb{T}^2}\chi^{\flat,\varepsilon}(x,\zeta,t)\psi(x,\zeta)
-\int_{\mathbb{R}}\int_{\mathbb{T}^2} \chi^{\flat}(x,\zeta,t)\psi(x,\zeta)\right| = 0;
\end{align}
while by splitting the difference, we have
\begin{align}\label{b lap}
\mathbb{E}\left|\int_{0}^{t} \int_{\mathbb{T}^2} \nabla \rho^{\flat} \cdot\nabla_x \psi(x, \rho^{\flat})-\nabla \rho^{\flat,\varepsilon} \cdot\nabla_x\psi (x, \rho^{\flat,\varepsilon})\right|
\lesssim \mathrm{I}_1+\mathrm{I}_2,
\end{align}
with
\begin{align*}
\mathrm{I}_1 & = \mathbb{E}\int_{0}^{t} \int_{\mathbb{T}^2}\left|\left(\nabla \sqrt{\rho^{\flat}}-\nabla \sqrt{\rho^{\flat,\varepsilon}}\right) \cdot \sqrt{\rho^{\flat}}\nabla_x\psi  (x, \rho^{\flat})\right| \to 0, \\
\mathrm{I}_2 & = \mathbb{E}\int_{0}^{t} \int_{\mathbb{T}^2}\left|\nabla \sqrt{\rho^{\flat,\varepsilon}} \cdot\left(\sqrt{\rho^{\flat}}\nabla_x\psi  (x, \rho^{\flat})-\sqrt{\rho^{\flat,\varepsilon}} \nabla_x\psi  (x, \rho^{\flat,\varepsilon})\right)\right| \to 0.
\end{align*}
The convergence of $\mathrm{I}_1$ follows directly from \eqref{CM nabla rho L2H1} and 
$\sqrt{\rho^{\flat}}\nabla_x\psi  (x, \rho^{\flat}) \in L^{\infty}([0,T] \times \mathbb{T}^2)$, and for the term $\mathrm{I}_2$, by H\"older's inequality and the uniform entropy estimate Proposition \ref{entropy estimate}, 
\begin{align*}
\mathrm{I}_2 \lesssim \Big(\mathbb{E}\int_{0}^{t} \int_{\mathbb{T}^2}\left|\left(\sqrt{\rho^{\flat}}\nabla_x\psi  (x, \rho^{\flat})-\sqrt{\rho^{\flat, \varepsilon}} \nabla_x\psi  (x, \rho^{\flat,\varepsilon})\right)\right|^2\Big)^{1/2} \sup_{\varepsilon > 0} \Big(\mathbb{E}\int_{0}^{T} \int_{\mathbb{T}^2}\left|\nabla \sqrt{\rho^{\flat,\varepsilon}} \right|^2\Big)^{1/2},
\end{align*}
which converges to zero by \eqref{CM rho L1} and the dominated convergence theorem,
as the smoothness and compact support of $\psi$ leads to the bound
\begin{align*}
\left|\left(\sqrt{\rho^{\flat}}\nabla_x\psi  (x, \rho^{\flat})-\sqrt{\rho^{\flat, \varepsilon}} \nabla_x\psi  (x, \rho^{\flat,\varepsilon})\right)\right| \lesssim 1,
\end{align*}
which is uniform in $\varepsilon > 0$. For the kernel term, we have
\begin{align}\label{b ker}
\left|\int_{0}^{t} \int_{\mathbb{T}^2} \psi (x, \rho^{\flat,\varepsilon}) \nabla \cdot\left(\rho^{\flat,\varepsilon} \mathcal{V}[\rho^{\flat,\varepsilon}]\right) - \psi (x, \rho^{\flat}) \nabla \cdot\left(\rho^{\flat} \mathcal{V}[\rho^{\flat}]\right)\right| \lesssim \mathrm{II}_{1} + \mathrm{II}_{2},
\end{align}
with
\begin{align*}
\mathbb{E}\mathrm{II}_{1} & := \mathbb{E}\left|\int_{0}^{t} \int_{\mathbb{T}^2} \psi (x, \rho^{\flat,\varepsilon}) \nabla \rho^{\flat,\varepsilon} \cdot \mathcal{V}[\rho^{\flat,\varepsilon}] - \psi (x, \rho^{\flat}) \nabla \rho^{\flat} \cdot \mathcal{V}[\rho^{\flat}]\right| \to 0,\\
\mathrm{II}_{2} & := \left|\int_{0}^{t} \int_{\mathbb{T}^2} \psi (x, \rho^{\flat,\varepsilon}) \left(|\rho^{\flat,\varepsilon}|^2-\rho^{\flat,\varepsilon}\|\rho_0\|_{L^1_{\mathbb{T}^2}}\right) - \psi (x, \rho^{\flat}) \left(|\rho^{\flat}|^2-\rho^{\flat}\|\rho_0\|_{L^1_{\mathbb{T}^2}}\right)\right| \to 0, \ \text{a.s.}
\end{align*}
as $\varepsilon \to 0^+$. The convergence of $\mathrm{II}_{2}$ follows directly from the dominated convergence and \eqref{CM rho L1} thanks to the smoothness and compact support of $\psi$, and we may rewrite the integral in $\mathrm{II}_{1}$ by integration by part as 
\begin{align*}
\int_{\mathbb{T}^2} \psi (x, \rho^{\flat}) \nabla \rho^{\flat} \cdot \mathcal{V}[\rho^{\flat}] = \kappa_1\int_{\mathbb{T}^2} \int_0^{\rho^{\flat}}\psi (x,\zeta)\mathrm{d}\zeta \rho^{\flat}.
\end{align*}
The convergence of $\mathrm{II}_{1}$ then follows from 
\begin{align*}
\Big|\int_0^t\int_{\mathbb{T}^2}  \int_0^{\rho^{\flat,\varepsilon}}\psi (x,\zeta)\mathrm{d}\zeta (\rho^{\flat,\varepsilon} - \rho^{\flat} )\Big| \lesssim \| \rho^{\flat,\varepsilon} - \rho^{\flat} \|_{L^1_{[0,T]}L^1_{\mathbb{T}^2}} \to 0, 
\end{align*}
as $\xi \to  \int_0^{\xi}\psi (x,\zeta)\mathrm{d}\zeta$ is bounded; and since $\int_0^{\rho^{\flat,\varepsilon}}\psi (x,\zeta)\mathrm{d}\zeta \to \int_0^{\rho^{\flat}}\psi (x,\zeta)\mathrm{d}\zeta$ pointwise,
\begin{align*}
\int_0^t\int_{\mathbb{T}^2} \Big| \Big(\int_0^{\rho^{\flat,\varepsilon}}\psi (x,\zeta)\mathrm{d}\zeta - \int_0^{\rho^{\flat,\varepsilon}}\psi (x,\zeta)\mathrm{d}\zeta \Big) \rho^{\flat} \Big| \to 0,
\end{align*}
due to dominated convergence. For the first control term, we have $\nabla_x\psi(x, \rho^{\flat,\varepsilon}) s_{\eta}(\rho^{\flat,\varepsilon})\to\nabla_x\psi(x, \rho^{\flat}) \sqrt{\rho^{\flat}}$ point-wise, since $s_{\eta}$ approximates $\sqrt{\cdot}$ uniformly on compact sets of $(0,\infty)$, $\nabla_x\psi$ is compactly supported and $\rho^{\flat,\varepsilon} \to \rho^{\flat}$ point-wise according to \eqref{CM rho L1}, and therefore
\begin{align}\label{eq:conv-grho-11}
& \ \Big|\int_{0}^{t} \int_{\mathbb{T}^2} P_{K}\big(\sqrt{\rho} \nabla \Psi^{\rho}\big) \cdot \left[\nabla_x\psi(x, \rho^{\flat,\varepsilon}) s_{\eta}(\rho^{\flat,\varepsilon}) - \nabla_x\psi(x, \rho^{\flat}) \sqrt{\rho^{\flat}}\right]\Big| \nonumber\\
\leq & \ \|\sqrt{\rho} \nabla \Psi^{\rho}\|_{L^2_{[0,T]}L^2_{\mathbb{T}^2}}\left\|\nabla_x\psi(x, \rho^{\flat,\varepsilon}) s_{\eta}(\rho^{\flat,\varepsilon}) - \nabla_x\psi(x, \rho^{\flat}) \sqrt{\rho^{\flat}}\right\|_{L^2_{[0,T]}L^2_{\mathbb{T}^2}} \to 0, 
\end{align}
where the limit is due to dominated convergence and the boundedness of $(x, \zeta) \mapsto \nabla_x\psi(x, \zeta) s_{\eta}(\zeta)$ uniformly in $\eta \geq 0$. Meanwhile, 
\begin{align}\label{eq:conv-grho-12}
\Big|\int_{0}^{t} \int_{\mathbb{T}^2} (1 - P_{K})\big(\sqrt{\rho} \nabla \Psi^{\rho}\big) \cdot \nabla_x\psi(x, \rho^{\flat}) \sqrt{\rho^{\flat}}\Big| \lesssim \|(1 - P_{K})\big(\sqrt{\rho} \nabla \Psi^{\rho}\big)\|_{L^2_{[0,T]}L^2_{\mathbb{T}^2}} \to 0,
\end{align}
as $\sqrt{\rho} \nabla \Psi^{\rho} \in L^2([0,T] \times \mathbb{T}^2)$. Similarly, we have the bound for the second term with control that
\begin{align}\label{eq:conv-grho-21}
\lim_{\varepsilon \to 0} \int_{0}^{t} \int_{\mathbb{T}^2} s_{\eta}(\rho^{\flat,\varepsilon})\nabla\rho^{\flat,\varepsilon} \cdot (1 - P_{K}) \big(\sqrt{\rho} \nabla \Psi^{\rho}\big)\partial_\zeta\psi(x, \rho^{\flat,\varepsilon}) = 0,
\end{align}
as $s_{\eta}(\rho^{\flat,\varepsilon})\nabla\rho^{\flat,\varepsilon} \partial_\zeta\psi(x, \rho^{\flat,\varepsilon})$ is bounded in $L^2([0,T] \times \mathbb{T}^2)$ uniformly in $\eta \geq 0$ and $\varepsilon > 0$ according to \eqref{CM nabla rho L2H1}; also directly by \eqref{CM nabla rho L2H1}
\begin{align}\label{eq:conv-grho-22}
\left|\int_{0}^{t} \int_{\mathbb{T}^2} \rho^{\flat} \left(\nabla\sqrt{\rho^{\flat,\varepsilon}} -\nabla\sqrt{\rho^{\flat}}\right)\cdot \nabla \Psi^{\rho} \sqrt{\rho}\partial_\zeta\psi(x, \rho^{\flat})\right| \to 0.
\end{align}
Furthermore, using dominated convergence, we have
\begin{align}
& \ \left|\int_{0}^{t} \int_{\mathbb{T}^2} \left(\rho^{\flat}\partial_\zeta\psi(x, \rho^{\flat})-\rho^{\flat,\varepsilon}\partial_\zeta\psi(x, \rho^{\flat,\varepsilon})\right) \nabla\sqrt{\rho^{\flat,\varepsilon}}\cdot  \nabla \Psi^{\rho} \sqrt{\rho} \right| \nonumber\\
\lesssim & \ \left\| \left(\rho^{\flat}\partial_\zeta\psi(x, \rho^{\flat})-\rho^{\flat,\varepsilon}\partial_\zeta\psi(x, \rho^{\flat,\varepsilon})\right) \nabla \Psi^{\rho} \sqrt{\rho} \right\|_{L^2_{[0,T]}L^2_{\mathbb{T}^2}} \to 0, 
\end{align}
since $\rho^{\flat}\partial_\zeta\psi(x, \rho^{\flat}) \to \rho^{\flat,\varepsilon}\partial_\zeta\psi(x, \rho^{\flat,\varepsilon})$ pointwise and $(x, \zeta) \mapsto \zeta \partial_\zeta\psi(x, \zeta)$ is bounded. In combination of \eqref{eq:conv-grho-11}--\eqref{eq:conv-grho-22}, we have proved that
\begin{equation}\label{eq:conv-grho}
  \lim_{\varepsilon\rightarrow 0} \int_{0}^{t} \int_{\mathbb{T}^2} s_{\eta}(\rho^{\flat,\varepsilon})\nabla\rho^{\flat,\varepsilon} \cdot P_{K}\big(\sqrt{\rho} \nabla \Psi^{\rho}\big)\partial_\zeta\psi(x, \rho^{\flat,\varepsilon}) = \int_{0}^{t} \int_{\mathbb{T}^2} \sqrt{\rho^{\flat}}\nabla\rho^{\flat} \cdot  P_{K} \big(\sqrt{\rho} \nabla \Psi^{\rho}\big)\partial_\zeta\psi(x, \rho^{\flat}),
\end{equation}
for all $t \in [0,T]$ a.s. For the martingale term, we apply the Burkholder-Davis-Gundy Inequality and derive
\begin{align}\label{MC M P}
\mathbb{E}\left[\sup_{t\in[0,T]}\left|\sqrt{\varepsilon}\int_0^t
\int_{\mathbb{T}^2}\nabla \left[\psi(x,\rho^{\flat,\varepsilon})\right] \cdot \xi_{K} \right|^2 \right] & \lesssim \varepsilon \mathbb{E} \int_0^T\int_{\mathbb{T}^2} |\nabla_x\psi(x,\rho^{\flat,\varepsilon})|^2 + \left|\partial_{\zeta}\psi(x,\rho^{\flat,\varepsilon})\nabla{\sqrt{\rho^{\flat,\varepsilon}}}\right|^2\rho^{\flat,\varepsilon}  \nonumber\\
& \lesssim \varepsilon \left(1 +  \|\nabla{\sqrt{\rho^{\flat,\varepsilon}}}\|^2_{L^2_{[0,T]}L^2_{\mathbb{T}^2}}\right) \to 0;
\end{align}
while the scaling $\varepsilon N_K\rightarrow 0$ ensures that \begin{align}\label{MC V P}
\lim_{\varepsilon\rightarrow 0}\frac{\varepsilon N_K}{2}\int_{0}^{t} \int_{\mathbb{T}^2} \rho^{\flat,\varepsilon} \partial_{\zeta} \psi(x, \rho^{\flat,\varepsilon}) = 0.
\end{align}
Finally, for every $M\geq1$, by taking a sequence of smooth approximations of $I_{\{[M,M+1]\}}$ as the test function, following the approach in \cite[Theorem 6.2]{WZ24}, we derive that a.s.
\begin{align}\label{MC KQ2}
\liminf _{M \rightarrow \infty}  m^{\flat}\left([0, T] \times \mathbb{T}^2 \times[M, M+1]\right)=0. 
\end{align}
Then it follows from Fatou's lemma that
\begin{align*}
  &\liminf _{M \rightarrow \infty} \mathbb{E} m^{\flat}\left([0, T] \times \mathbb{T}^2 \times[M, M+1]\right)=0.
\end{align*}
Therefore, in combination with \eqref{MC KQ1} and \eqref{MC KQ2},  it follows that the measure $m^{\flat}$ satisfies the conditions in Definition \ref{ske-kinetic}. This completes the proof. 
\end{proof}

The concentration of measures is then established, once we prove that the given $\rho$ is the unique renormalized kinetic solution to \eqref{eq-riesz} in the sense of Definition \ref{ske-kinetic} with control $g = \sqrt{\rho} \nabla \Psi^{\rho}$, since then for any subsequence $\{\nu^{\varepsilon, \rho}\}_{\varepsilon > 0}$, we have found a subsubsequence that converges in law to $ \rho^{\flat}$ in $L^1_{[0,T]}L^1_{\mathbb{T}^2}$ with the strong topology. We claim that the following Proposition holds, the proof of which is postponed to the next section \ref{sec-ws!}. 
\begin{proposition}\label{L1uniq}
Given interaction $\mathcal{V}[\cdot]$ as in \eqref{ker-V} under the condition \eqref{eq:small-KS}, we let $\rho^1_0,\rho^2_0 \in \mathrm{Ent}(\mathbb{T}^2)$. If $\rho^1$ and $\rho^2$ are renormalized kinetic solutions of  \eqref{control-1} in the sense of Definition \ref{ske-kinetic} with control $g$ and initial condition initial  $\rho^1_0$ and $\rho^2_0$ respectively, and further 
\begin{align}\label{eq:strong-rho2}
\rho^2 \in \mathcal{C}_0,
\end{align}
then we have
\begin{equation}\label{qq-31-1}
\|\rho^1-\rho^2\|_{L^{\infty}_{[0,T]}L^1_{\mathbb{T}^2}}\lesssim \|\rho_0^1-\rho_0^2\|_{L^1_{\mathbb{T}^2}}, 
\end{equation}
for some implicit constant which depends on $\rho^2$.
\end{proposition}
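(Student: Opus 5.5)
We will prove \eqref{qq-31-1} by the doubling-of-variables argument for renormalized kinetic solutions, in the form of \cite{FG24,FG23}. The singular interaction is handled by a Grönwall argument in which $\rho^2\in\mathcal{C}_0$ plays the role of the ``strong'' solution. Let $\chi^i$ denote the kinetic functions and $m^i$ the kinetic measures. We mollify in $(x,\zeta)$ with $\kappa^{\delta}(x-y)\kappa^{\epsilon}(\zeta-\zeta')$, truncate near $\zeta=0$ and $\zeta=\infty$ with cutoffs $\phi_\beta$ and $\zeta_M$, and test the kinetic equation \eqref{cspde KSE} for $\rho^1$ and $\rho^2$. This yields an identity for
\begin{align*}
\int_{\mathbb{R}}\int_{\mathbb{T}^2}\big(\chi^{1}_{\delta,\epsilon}+\chi^2_{\delta,\epsilon}-2\chi^1_{\delta,\epsilon}\chi^2_{\delta,\epsilon}\big)\phi_\beta\zeta_M ,
\end{align*}
which formally converges to $\|\rho^1(t)-\rho^2(t)\|_{L^1_{\mathbb{T}^2}}$.

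\textbf{Step 1: terms already in the literature.} The Laplacian/kinetic-measure terms and the control terms with $\sqrt{\rho}g$ are handled exactly as in \cite[Theorem 19]{FG23} (see also \cite{WZ24}). The parabolic defect term is nonpositive after using \eqref{KM R}. The control terms cancel up to errors that vanish as $\delta,\epsilon\to0$, then $\beta\to0$, $M\to\infty$, using the $\tfrac12$-Hölder structure of $\sqrt{\cdot}$, $g\in L^2_{[0,T]}L^2_{\mathbb{T}^2}$, $\nabla\sqrt{\rho^i}\in L^2$, and \eqref{KM VI}. The boundary terms at $\zeta\approx0$ are controlled through \eqref{KM R} as in \cite{FG24}.

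\textbf{Step 2: the interaction term.} This is the genuinely new step. We write $\rho^1\mathcal V[\rho^1]-\rho^2\mathcal V[\rho^2]=(\rho^1-\rho^2)\mathcal V[\rho^2]+\rho^1\mathcal V[\rho^1-\rho^2]$.

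For the first part, the difference of the kinetic formulations produces the transport of $\operatorname{sgn}$-type quantities by the fixed field $\mathcal V[\rho^2]$. Integration by parts turns this into a term bounded by
\begin{align*}
\int_0^t\|\nabla\cdot\mathcal V[\rho^2]\|_{L^\infty}\,\|\rho^1-\rho^2\|_{L^1_{\mathbb{T}^2}},
\end{align*}
together with commutator errors. Since $\nabla\cdot\mathcal V[\rho^2]=\kappa_1(\rho^2-\bar\rho^2)$ and $\rho^2\in L^\infty_{[0,T]}W^{1,p'}_{\mathbb{T}^2}\subset L^\infty_{t,x}$ because $p'>2$, this coefficient is bounded. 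The commutator errors are DiPerna--Lions type: they involve $\nabla\mathcal V[\rho^2]$, which by Calderón--Zygmund lies in $L^\infty_{[0,T]}W^{1,p'}$. Hence $\mathcal V[\rho^2]$ is Lipschitz, and the commutators vanish as $\delta\to0$.

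For the second part, we use that $\rho^1\in L^\infty_{[0,T]}L^1_{\mathbb{T}^2}$ with $\nabla\sqrt{\rho^1}\in L^2$. By \eqref{eq:Lp-nablaG} and Young's inequality, $\|\mathcal V[\rho^1-\rho^2]\|_{L^q}\lesssim\|\rho^1-\rho^2\|_{L^1}$ for $q<2$. The main obstacle is closing this in $L^1$, since pairing against $\nabla\rho^1$ requires $\mathcal V[\cdot]\in L^\infty$-type control. The plan is to put the derivative on the strong side. In the doubled formulation, this contribution appears as $\int \mathcal V[\rho^1-\rho^2]\cdot\nabla_x(\text{test})\,\rho^1$ paired against $\operatorname{sgn}(\rho^1-\rho^2)$-type factors. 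After the kinetic reduction, the derivative should fall on $\rho^2$ (or $\nabla\cdot\mathcal V$), giving bounds such as
\begin{align*}
\big|\int \rho^1\mathcal V[\rho^1-\rho^2]\cdot \nabla(\cdot)\big|\lesssim\|\nabla\rho^2\|_{L^{p'}}\|\mathcal V[\rho^1-\rho^2]\|_{L^{q}}\|\rho^1\|_{L^r}, \qquad \tfrac1{p'}+\tfrac1q+\tfrac1r=1 .
\end{align*}
We choose $q<2$ close to $2$ and $r<\infty$, which is possible since $p'>2$. By \eqref{L2-ES}, $\|\rho^1\|_{L^r}\lesssim 1+\|\nabla\sqrt{\rho^1}\|_{L^2}^{2-2/r}\in L^1_t$. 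The $\kappa_1$-part $\nabla\cdot\mathcal V[\rho^1-\rho^2]=\kappa_1(\rho^1-\rho^2)$ is instead bounded by $\|\rho^1-\rho^2\|_{L^1}\|\rho^i\|_{L^\infty}$-type terms, using the strong solution where boundedness is needed. If the kinetic reduction does not place the derivative cleanly on $\rho^2$, the fallback is to work with the mollified $\chi$'s and accept a $\delta$-dependent commutator, then optimize.

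\textbf{Step 3: conclusion.} Collecting the estimates gives
\begin{align*}
\|\rho^1(t)-\rho^2(t)\|_{L^1}\le\|\rho^1_0-\rho^2_0\|_{L^1}+\int_0^t a(s)\|\rho^1(s)-\rho^2(s)\|_{L^1}\,ds,
\end{align*}
with $a\in L^1_{[0,T]}$ depending on $\|\rho^2\|_{L^\infty_tW^{1,p'}}$ and on $\|\nabla\sqrt{\rho^1}\|_{L^2_{t,x}}$. The latter is bounded through the entropy estimate of the skeleton equation, uniformly given $g$ and \eqref{eq:small-KS}. Grönwall's inequality then yields \eqref{qq-31-1}, with a constant depending on $\rho^2$ (and on the a priori bounds of $\rho^1$).
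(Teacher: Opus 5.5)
Your Step 2 does not close for the second piece of your splitting. Tested against $\operatorname{sgn}(\rho^1-\rho^2)$, the flux $\rho^1\mathcal{V}[\rho^1-\rho^2]$ produces, up to sign and mean-value terms,
\begin{align*}
\int_{\mathbb{T}^2}\operatorname{sgn}(\rho^1-\rho^2)\,\nabla\rho^1\cdot\mathcal{V}[\rho^1-\rho^2]
\qquad\text{and}\qquad
\kappa_1\int_{\mathbb{T}^2}\rho^1\,|\rho^1-\rho^2| .
\end{align*}
Neither term is $\lesssim\|\rho^1-\rho^2\|_{L^1_{\mathbb{T}^2}}$.

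\begin{itemize}
\item In the first term the derivative sits on the weak solution. Since $\nabla\rho^1=2\sqrt{\rho^1}\nabla\sqrt{\rho^1}$ only lies in $L^1_{[0,T]}L^q_{\mathbb{T}^2}$ for $q<2$, you cannot pair it with $\mathcal{V}[\rho^1-\rho^2]\in L^p$, $p<2$.
\item In the second term the weight is $\rho^1\notin L^\infty$, so no strong solution is available to supply boundedness. In the attractive case $\kappa_1<0$ this term also has the wrong sign.
\end{itemize}

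Your three-factor bound with $\|\rho^1\|_{L^r}$ does not come out of any kinetic manipulation. Moving $\nabla_x$ onto $\chi^2$ only produces terms like $\rho^1\,\mathcal{V}[\rho^1-\rho^2]\cdot\nabla(\rho^1-\rho^2)\,\kappa^\delta(\rho^1-\rho^2)$. These are of size $\delta^{-1}$ on $\{|\rho^1-\rho^2|\lesssim\delta\}$, and $\rho^1$ appears as a multiplier rather than a vanishing factor. The ``fallback'' of optimizing a $\delta$-dependent commutator is not an argument.

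The missing idea is to let the \emph{weak} solution's own field do the transporting:
\begin{align*}
\rho^1\mathcal{V}[\rho^1]-\rho^2\mathcal{V}[\rho^2]=(\rho^1-\rho^2)\mathcal{V}[\rho^1]+\rho^2\mathcal{V}[\rho^1-\rho^2].
\end{align*}
Equivalently, write $\rho^1=(\rho^1-\rho^2)+\rho^2$ in your second piece. This is the paper's split $K_{11}+K_{21}=(K_{11}-J)+(K_{21}+J)$, and it works as follows.

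\begin{itemize}
\item The transport part becomes $\operatorname{sgn}(\rho^2-\rho^1)\nabla\big(\int_{\rho^2}^{\rho^1}\zeta^M\big)\cdot\mathcal{V}[\rho^1]$. Replace $\operatorname{sgn}$ by $\operatorname{sgn}^\delta$ and integrate by parts. The error containing $\nabla\operatorname{sgn}^\delta$ lives on $\{0<|\rho^1-\rho^2|\le\delta\}$ and carries the factor $|\int_{\rho^2}^{\rho^1}\zeta^M|\le\delta$. So it vanishes (the terms $K_3,K_4$) using only $\mathcal{V}[\rho^1]\in L^2$, $\nabla\sqrt{\rho^i}\in L^2$, $\rho^2\in L^\infty$ and the $\tfrac12$-H\"older continuity of $\sqrt{\cdot}$. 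No Lipschitz regularity of any field is needed.
\item The divergence term produced there, $-\kappa_1\int\operatorname{sgn}(\rho^2-\rho^1)(\rho^1-\rho^2)\rho^1$, cancels the $\rho^1$-part of $\kappa_1\int\operatorname{sgn}(\rho^2-\rho^1)(\rho^1-\rho^2)(\rho^1+\rho^2)$, which comes from the terms $\rho^i\nabla\cdot\mathcal{V}[\rho^i]$. What remains is $\kappa_1\int\operatorname{sgn}(\rho^2-\rho^1)(\rho^1-\rho^2)\rho^2\le|\kappa_1|\|\rho^2\|_{L^\infty}\|\rho^1-\rho^2\|_{L^1}$.
\item The last term satisfies $\int\operatorname{sgn}(\rho^2-\rho^1)\nabla\rho^2\cdot\mathcal{V}[\rho^1-\rho^2]\le\|\nabla\rho^2\|_{L^{p'}}\|V\|_{L^p}\|\rho^1-\rho^2\|_{L^1}$.
\end{itemize}

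The cancellation in the second item is invisible if the pieces are estimated separately, as you do. With this decomposition your Calder\'on--Zygmund/DiPerna--Lions treatment of $\mathcal{V}[\rho^2]$ is superfluous. The Gr\"onwall coefficient then depends only on $\|\rho^2\|_{L^\infty_{[0,T]}W^{1,p'}_{\mathbb{T}^2}}$ and $\|V\|_{L^p_{\mathbb{T}^2}}$, not on $\rho^1$.
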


With the aid of Lemma \ref{ske-lem}, any given $\rho \in \mathcal{C}_0$ is both a weak solution and a renormalized kinetic solution to \eqref{control-1} with control $g = \sqrt{\rho}\nabla\Psi^{\rho}$. The weak--strong uniqueness result then implies that the limit $\rho^{\flat}$ in Proposition \ref{C1} coincides with $\rho$. The large deviations lower bound restricted to $\mathcal{C}_0$ then follows directly from Lemma \ref{entropymethod}.
\begin{proof}[Proof of \eqref{RS E} in Theorem \ref{thm:LDP}]
It follows from Lemma \ref{C1} that 
\begin{equation}\label{RL MC}
\nu^{\varepsilon;\rho}\rightharpoonup\delta_{\rho},
\end{equation}
weakly in $\mathcal{P}(L^1([0,T] \times \mathbb{T}^2))$ as $\varepsilon\rightarrow0$, and \eqref{entropy-check} verifies the condition \eqref{eq:relative-entropy-bound} in Lemma \ref{entropymethod}, from which \eqref{RS E} follows.
\end{proof}

\section{Weak-strong uniqueness}\label{sec-ws!}
A function $\rho$ is called a strong renormalized kinetic solution of Eq. (\ref{control-1}) with initial  $\rho_0$ and control $g$, if $\rho$ is a renormalized kinetic solution of it in the sense of Definition \ref{ske-kinetic} with initial  $\rho_0$ and control $g$, and further 
\begin{align*}
\rho\in L^\infty_{[0,T]}L^\infty_{\mathbb{T}^2}\cap L^\infty_{[0,T]}W^{1,p'}_{\mathbb{T}^2}, 
\end{align*}
for an arbitrary $p'>2$. We recall from \eqref{S-intro} the definition of the restriction set as 
\begin{align*}
\mathcal{C}_0 = \bigcup_{p'>2}\left\{\rho\in L_{[0,T]}^{\infty}L^1_{\mathbb{T}^2} : \rho\in L_{[0,T]}^{\infty}L^{\infty}_{\mathbb{T}^2} \cap L_{[0,T]}^{\infty}W_{\mathbb{T}^2}^{1,p'}\right\},
\end{align*}
and therefore a strong renormalized kinetic solution is just a renormalized kinetic solution that lives in the restriction space $\mathcal{C}_0$. 

In the following, we establish a decay estimate for the kinetic measure at small values, which will be used in the proof of weak--strong uniqueness. Since the result can be proved in a similar manner to \cite[Theorem 4.6]{FG24}, we omit the proof. 
\begin{lemma}\label{KM V0 remark}
Let $\rho$ be a renormalized kinetic solution of \eqref{control-1} in the sense of Definition \ref{ske-kinetic} and let $m$ be  the corresponding kinetic measure. Then we have
\begin{equation}\label{KM V0}
  \lim _{\beta \rightarrow 0}\left[\beta^{-1} m\left(\mathbb{T}^2 \times[0, T] \times[\beta / 2, \beta]\right)\right]=0.
\end{equation}

\end{lemma}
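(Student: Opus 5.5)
We adapt the argument of \cite[Theorem 4.6]{FG24} to the skeleton equation \eqref{control-1}. The point is to test the kinetic formulation \eqref{cspde KSE} against a cutoff that localizes the velocity variable near zero, and then to show that every remaining term is $o(\beta)$ as $\beta\to0$. Concretely, for $\beta\in(0,1)$ we take the test function
\begin{align*}
\psi_\beta(\xi) := \int_0^\xi \varphi_\beta(\zeta)\,\mathrm{d}\zeta,
\end{align*}
where $\varphi_\beta$ is a smooth version of $\beta^{-1}\mathbf{1}_{[\beta/2,\beta]}$ with $0\le\varphi_\beta\le C\beta^{-1}$. Then $\psi_\beta$ is independent of $x$, satisfies $0\le\psi_\beta\le C$, and equals $\tfrac12$ (up to the smoothing) for $\xi\ge\beta$, while $\partial_\xi\psi_\beta=\varphi_\beta$. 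The function $\psi_\beta$ is not compactly supported in $(0,\infty)$, so we also multiply by a large-velocity cutoff $\mathbf{1}_{[0,M]}$ (smoothed) and send $M\to\infty$. By \eqref{KM VI}, the boundary contribution of the kinetic measure on $[M,M+1]$ vanishes along a subsequence. The $x$-gradient terms, namely $\Delta_x\psi$ and $\nabla_x\psi$, disappear because $\psi_\beta$ does not depend on $x$.

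Plugging this into \eqref{cspde KSE} at $t=T$ yields
\begin{align*}
\int_0^T\!\!\int\varphi_\beta\,\mathrm{d}m
&= -\int_{\mathbb{T}^2}\Phi_\beta(\rho)\Big|_0^T
-\int_0^T\!\!\int_{\mathbb{T}^2}\nabla\cdot(\rho\mathcal{V}[\rho])\,\psi_\beta(\rho)\\
&\quad+2\int_0^T\!\!\int_{\mathbb{T}^2}\rho\,\nabla\sqrt{\rho}\cdot g\,\varphi_\beta(\rho),
\end{align*}
where $\Phi_\beta(r)=\int_0^r\psi_\beta$. The left-hand side dominates $\tfrac{c}{\beta}\,m(\mathbb{T}^2\times[0,T]\times[\beta/2,\beta])$, so it suffices to show that the right-hand side is $o(1)$ as $\beta\to0$. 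Replacing $\psi_\beta$ by $\tfrac12\mathbf 1_{\xi>0}$ does not work directly: that only gives $\beta^{-1}m=O(1)$.

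The correct normalization is therefore the one from \cite{FG24}: subtract the identity obtained with $\psi\equiv\tfrac12$, which is mass conservation. Equivalently, we test with $\tilde\psi_\beta=\psi_\beta-\tfrac12$, which is supported in $\{\xi\le\beta\}$ and bounded by $1$. Each term is then estimated as follows.
\begin{enumerate}
\item[\emph{Time boundary.}] The time-boundary term is bounded by $\int_{\{\rho\le\beta\}}\rho\le\beta$ at times $0$ and $T$, which is $o(1)$ after division by nothing further.
\item[\emph{Control.}] The control term is bounded by
\begin{align*}
2\int_0^T\!\!\int_{\{\beta/2\le\rho\le\beta\}}\beta^{-1}\rho\,|\nabla\sqrt\rho|\,|g|
\;\le\; 2\int_0^T\!\!\int_{\{\rho\le\beta\}}|\nabla\sqrt\rho|\,|g|.
\end{align*}
This tends to $0$ by dominated convergence, since $|\nabla\sqrt\rho|\,|g|\in L^1$ by \eqref{eq:skeleton-l1-fisher} and the set $\{0<\rho\le\beta\}$ shrinks. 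On $\{\rho=0\}$ we have $\nabla\sqrt\rho=0$ a.e.
\item[\emph{Interaction.}] This is the main obstacle. As in Section \ref{sec:existence}, we write
\begin{align*}
\nabla\cdot(\rho\mathcal{V}[\rho])\,\tilde\psi_\beta(\rho)
=\nabla\cdot\big(\rho\,\mathcal V[\rho]\,\tilde\psi_\beta(\rho)\big)
-\rho\,\varphi_\beta(\rho)\,\nabla\rho\cdot\mathcal V[\rho].
\end{align*}
The first piece integrates to zero over the torus. For the second, we set $G_\beta(r)=\int_0^r s\varphi_\beta(s)\,\mathrm{d}s$, so that $0\le G_\beta(r)\le C\beta$ and $G_\beta(r)=0$ for $r\le\beta/2$. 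Integrating by parts gives
\begin{align*}
\int G_\beta(\rho)\,\nabla\cdot\mathcal V[\rho] = \kappa_1\int G_\beta(\rho)\,\big(\rho-\|\rho_0\|_{L^1}\big),
\end{align*}
since the Biot--Savart part is divergence free. Its absolute value is at most $C\beta\,(\|\rho\|_{L^1}+\|\rho_0\|_{L^1})$, hence $O(\beta)$ after integrating in time, uniformly in $\beta$. Only $\nabla\cdot\mathcal V[\rho]\in L^1$ is used here; this follows from \eqref{L2-ES} together with $\nabla\sqrt\rho\in L^2$.
\end{enumerate}

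Collecting the three estimates shows $\beta^{-1}m(\mathbb{T}^2\times[0,T]\times[\beta/2,\beta])\to0$, which is \eqref{KM V0}. The steps that need care are the justification of the non-compactly-supported test function through the $M$-cutoff and \eqref{KM VI}, and the fact that the tested identity holds only for a.e.\ $t$. For the latter we choose times $t_n\to T$ where it holds and use weak continuity in time.
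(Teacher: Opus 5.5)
Your strategy is the adaptation of \cite[Theorem 4.6]{FG24} that the paper has in mind, and most of it is right, but there is a genuine gap where you remove the large-velocity cutoff.

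\textbf{What is correct.} The time-boundary term is $O(\beta)$. It is bounded by $\int_{\mathbb{T}^2}\min(\rho,\beta)$ rather than by $\int_{\{\rho\le\beta\}}\rho$, but that changes nothing. The control term near zero vanishes by dominated convergence, since $\nabla\sqrt{\rho}\cdot g\in L^1$ and $\rho\varphi_\beta(\rho)$ is bounded and supported where $\rho\lesssim\beta$. The $G_\beta$ integration by parts shows that the interaction term contributes $O(\beta)$.

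\textbf{The gap.} If you test with $\psi_\beta\zeta_M$, where $\zeta_M$ decreases from $1$ to $0$ on $[M,M+1]$, the cutoff produces two $\beta$-independent terms, not one. The first is the kinetic-measure term, bounded by $m(\mathbb{T}^2\times[0,T]\times[M,M+1])$ and handled by \eqref{KM VI}. The second is the control term
\begin{align*}
2\int_0^T\!\!\int_{\mathbb{T}^2}\rho\,\nabla\sqrt{\rho}\cdot g\,\psi_\beta(\rho)\,\zeta_M'(\rho)\;\approx\;-\int_0^T\!\!\int_{\{M\le\rho\le M+1\}}\rho\,\nabla\sqrt{\rho}\cdot g ,
\end{align*}
which your proposal never mentions. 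Nothing in Definition \ref{ske-kinetic} makes this term small along the subsequence supplied by \eqref{KM VI}. The integrand $\rho\nabla\sqrt{\rho}\cdot g$ is not known to be integrable; only $\nabla\sqrt{\rho}\cdot g$ is. Cauchy--Schwarz together with \eqref{KM R} only gives
\begin{align*}
\Big|\int_0^T\!\!\int_{\{M\le\rho\le M+1\}}\rho\,\nabla\sqrt{\rho}\cdot g\Big|\le\frac12\, m\big(\mathbb{T}^2\times[0,T]\times[M,M+1]\big)^{1/2}\Big((M+1)\int_0^T\!\!\int_{\{M\le\rho\le M+1\}}|g|^2\Big)^{1/2},
\end{align*}
with a factor $M$ that you cannot absorb. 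Spreading the cutoff over $[M,2M]$ makes the control contribution $\lesssim\int_0^T\!\int_{\{\rho\ge M\}}|\nabla\sqrt{\rho}||g|\to0$. But it replaces the kinetic-measure term by $M^{-1}m(\mathbb{T}^2\times[0,T]\times[M,2M])$, whose vanishing is not part of the definition either.

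\textbf{Why this is not a technicality.} In the identity for $\psi_\beta\zeta_M$, every other term converges as $M\to\infty$. So the sum $R_M$ of the two cutoff terms has a limit $R_\infty$, independent of $\beta$. Your three estimates together with mass conservation then give $\lim_{\beta\to0}\int\varphi_\beta\,\mathrm{d}m=R_\infty$. Hence the lemma holds if and only if this flux at infinity vanishes, and what you actually need is
\begin{align*}
\liminf_{M\to\infty}\Big[m\big(\mathbb{T}^2\times[0,T]\times[M,M+1]\big)+\int_0^T\!\!\int_{\{M\le\rho\le M+1\}}\rho\,|\nabla\sqrt{\rho}|\,|g|\Big]=0 .
\end{align*}
Each summand separately has $\liminf$ zero. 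For the second this follows from $\sum_M M^{-1}\int_0^T\!\int_{\{M\le\rho\le M+1\}}\rho|\nabla\sqrt{\rho}||g|\lesssim\|\nabla\sqrt{\rho}\|_{L^2}\|g\|_{L^2}$. But they need not vanish along a common subsequence.

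In \cite{FG24} this issue does not arise. Without a control, the only other cutoff contributions there are an integrable It\^o correction of size $\int_{\{M\le\rho\le M+1\}}\rho$ and a martingale with zero mean.

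Your argument is complete whenever $\rho\nabla\sqrt{\rho}\cdot g\in L^1_{[0,T]}L^1_{\mathbb{T}^2}$, for example when $\rho$ is bounded, as for the strong solution in $\mathcal{C}_0$. However, the lemma is also needed for the general solution $\rho^1$ in Proposition \ref{L1uniq}. There you need an additional ingredient, such as a vanishing-at-infinity condition that also controls the control flux.
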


The weak-strong uniqueness of skeleton equation \eqref{control-1} can now be proved.
\begin{proof}[Proof of Proposition \ref{L1uniq}]
We fix $p \in (1,2)$ and let $p' > 1$ denote the H\"older conjugate of $p$. Let $\chi^i$ and $m^i$ be the renormalized kinetic function and the kinetic measure of $\rho^i$ respectively, for $i = 1,2$. Let $\kappa^{\mathrm{s}}$ and $\kappa^{\mathrm{v}}$ be nonnegative smooth kernels on $\mathbb{T}^2$ and $\mathbb{R}^2$ respectively. For every $ \nu,\delta\in(0,1)$, we further define 
\begin{align*} 
\kappa_{\mathrm{s}}^{\nu}(x,y) := \frac{1}{\nu^{2}}\kappa^{\mathrm{s}}\left(\frac{\mathrm{d}_{\mathbb{T}^2}(x,y)}{\nu}\right), \quad \kappa_{\mathrm{v}}^{\delta}(\xi,\eta) := \frac{1}{\delta}\kappa^{\mathrm{v}}\left(\frac{\xi - \eta}{\delta}\right),
\end{align*}
and
\begin{align*}
\kappa^{\nu,\delta}(x,y,\xi,\eta) := \kappa_{\mathrm{s}}^{\nu}(x,y) \kappa_{\mathrm{v}}^{\delta}(\xi,\eta), \quad \chi^{i,\nu,\delta}(y,\eta) := \int_{\mathbb{R}}\int_{\mathbb{T}^2}\chi^i(x,\xi)\kappa^{\nu,\delta}(x,y,\xi,\eta)\mathrm{d}x\mathrm{d}\xi,
\end{align*}
for $i = 1,2$ and $\nu,\delta\in(0,1)$.
For every $M>0$, let the cutoff function $\zeta^M:\mathbb{R}\rightarrow[0,1]$ be defined such that 
\begin{align*}
\zeta^M(\xi) = 0, \ \text{if} \ \xi \leq \frac{1}{M} \ \text{or} \ \xi > M+1; \quad \zeta^M(\xi)  = 1, \ \text{if} \ \frac{2}{M} \leq \xi \leq M, 
\end{align*}
and linear interpolation in between. Then almost surely for almost every $t \in (0,T]$, by the dominated convergence theorem, we have that almost surely, 
\begin{align}\label{qq-28} 
\int_{\mathbb{R}}\int_{\mathbb{T}^2}|\chi_t^1-\chi_t^2|^2\zeta^M(\eta)\mathrm{d}y\mathrm{d}\eta = \lim_{\nu,\delta\rightarrow0}\int_{\mathbb{R}}\int_{\mathbb{T}^2}\left|\chi_t^{1,\nu,\delta}-\chi_t^{2,\nu,\delta}\right|^2\zeta^M(\eta)\mathrm{d}y\mathrm{d}\eta = \lim_{\nu,\delta\rightarrow0}\mathbb{I}_t^{\nu,\delta,M},
\end{align}
where 
\begin{equation}\label{rr-11}
\mathbb{I}_t^{\nu,\delta,M}
= \int_{\mathbb{R}}\int_{\mathbb{T}^2}\Big(\chi_t^{1,\nu,\delta}
+\chi_t^{2,\nu,\delta}-2\chi_t^{1,\nu,\delta}\chi_t^{2,\nu,\delta}\Big)\zeta^M(\eta)\mathrm{d}y\mathrm{d}\eta,
\end{equation}
for $M\in(0,\infty)$, $\delta\in(0,1/M)$ and $\nu\in(0,1)$. It follows from Definition \ref{ske-kinetic} of renormalized kinetic solution that, it holds as distributions on $\mathbb{T}^2\times \mathbb{R}\times[0,T]$ for $i = 1,2$,
\begin{align}\notag
 \partial_t \chi^{i,\nu,\delta}_t(y,\eta) &= \int_{\mathbb{R}}\int_{\mathbb{T}^2}\chi_t^i\Delta_x\kappa^{\nu,\delta}(x,y,\xi,\eta)\mathrm{d}x\mathrm{d}\xi
  \notag
  -\int_{\mathbb{R}}\int_{\mathbb{T}^2}m^i_t\partial_{\xi}\kappa^{\nu,\delta}(x,y,\xi,\eta)\mathrm{d}x\mathrm{d}\xi\\
  \notag
  &+2\int_{\mathbb{T}^2}\rho^ig(x,t)\cdot\nabla_x\sqrt{\rho^i}\partial_{\xi}\kappa^{\nu,\delta}(x,y,\rho^i,\eta)\mathrm{d}x\mathrm{d}\xi\\
  &+\int_{\mathbb{T}^2}\sqrt{\rho^i}g(x,t)\cdot\nabla_x\kappa^{\nu,\delta}(x,y,\rho^i,\eta)\mathrm{d}x\mathrm{d}\xi -\int_{\mathbb{T}^2} \nabla_x\cdot\left(\rho^i\mathcal{V}[\rho^i]\right) \kappa^{\nu,\delta}(x,y,\rho^i,\eta)\mathrm{d}x.  \label{kineticsmooth}
\end{align}
We use variables $(x, \xi) \in \mathbb{T}^2 \times \mathbb{R}$ for the kinetic function $\chi^1$, and variables $\left(x^{\prime}, \xi^{\prime}\right) \in \mathbb{T}^2 \times \mathbb{R}$ for the kinetic function $\chi^2$ for clarity, and let
\begin{align*}
\bar{\kappa}_{t, i}^{\nu, \delta}(\cdot, y, \eta)=\kappa^{\nu, \delta}\left(\cdot, y, \rho^i(\cdot, t), \eta\right),
\end{align*}
for $i = 1,2$. Proceeding as \cite[Theorem 8]{FG23},  we can decompose the temporal derivative of \eqref{rr-11} into the five terms
\begin{equation}\label{full}
  \partial_t\mathbb{I}^{\nu,\delta,M}_t=\partial_t\mathbb{I}^{\nu,\delta,M}_{t,\mathrm{par}}
  +\partial_t\mathbb{I}^{\nu,\delta,M}_{t,\mathrm{hyp}}+\partial_t\mathbb{I}^{\nu,\delta,M}_{t,\mathrm{con}}
  +\partial_t\mathbb{I}^{\nu,\delta,M}_{t,\mathrm{vel}}+\partial_t\mathbb{I}^{\nu,\delta,M}_{t,\mathrm{ker}},
\end{equation}
where the first four terms are the same as in \cite[(25)]{FG23}, thus we omit their detailed expressions to avoid repeating them. It is sufficient to deal with the kernel term
\begin{align}\label{t-1}
\partial_t\mathbb{I}^{\varepsilon,\delta,M}_{t,ker}=&\int_{\mathbb{R}}\int_{(\mathbb{T}^2)^2} \nabla_x\cdot ( \rho^1 \mathcal{V}[\rho^1] ) \bar{\kappa}^{\varepsilon,\delta}_{1,t}(2\chi_t^{2,\varepsilon,\delta}-1)\zeta^M(\eta)\mathrm{d}x\mathrm{d}y\mathrm{d}\eta\nonumber\\
 &+\int_{\mathbb{R}}\int_{(\mathbb{T}^2)^2} \nabla_{x'}\cdot (\rho^2 \mathcal{V}[\rho^2]) \bar{\kappa}^{\varepsilon,\delta}_{2,t}(2\chi_t^{1,\varepsilon,\delta}-1)\zeta^M(\eta)\mathrm{d}x'\mathrm{d}y\mathrm{d}\eta.
\end{align}
Recall that for every  $t\in[0,T]$, $\|\rho^1(t)\|_{L^1_{\mathbb{T}^2}}=\|\rho^1_0\|_{L^1_{\mathbb{T}^2}}$ and the weak derivative $\nabla\rho^1=2\sqrt{\rho^1}\nabla\sqrt{\rho^1}$ holds for almost every $(x,t)\in\mathbb{T}^2\times[0,T]$. For every $t\in[0,T]$, $M\in(0,\infty)$ and $\delta\in(0,\frac{1}{M})$, by the definitions of $\kappa^{\delta}$, $\zeta^M$, and following a similar estimate as \eqref{eq:interaction is L1}, we deduce that $\nabla_x\cdot ( \rho^1 \mathcal{V}[\rho^1]) {\kappa}^{\delta}(\rho^1(x),\eta)\zeta^M(\eta)$ is $L^1_{[0,T]\times\mathbb{R}\times\mathbb{T}^2}$-integrable. Similarly,  $\nabla_{x'}\cdot ( \rho^2 \mathcal{V}[\rho^2]) {\kappa}^{\delta}(\rho^2(x'),\eta)\zeta^M(\eta)$ is also $L^1_{[0,T]\times\mathbb{R}\times\mathbb{T}^2}$-integrable.
By  the
definition of $\kappa^{\varepsilon,\delta}$, the boundedness of the kinetic functions and the dominated convergence theorem,  taking $\varepsilon\rightarrow 0$ in \eqref{t-1}, it gives
\begin{align}\label{WS kernel delta}
\lim_{\varepsilon\rightarrow 0}\mathbb{I}^{\varepsilon,\delta,M}_{t,ker}
=
 &\int^t_0\int_{\mathbb{R}}\int_{\mathbb{T}^2}(2\chi^{2,\delta}_t(y)-1)  \nabla_y\cdot (\rho^1 \mathcal{V}[\rho^1] ) \bar{\kappa}^{\delta}_{1,t}\zeta^M(\eta)\mathrm{d}y\mathrm{d}\eta \mathrm{d}s\\ \notag
&+\int^t_0\int_{\mathbb{R}}\int_{\mathbb{T}^2}(2\chi^{1,\delta}_t(y)-1)  \nabla_y\cdot(\rho^2  \mathcal{V}[\rho^2] ) \bar{\kappa}^{\delta}_{2,t}\zeta^M(\eta)\mathrm{d}y\mathrm{d}\eta \mathrm{d}s,
\end{align}
where
$
\chi_t^{i,\delta}(y,\eta)=\int_{\mathbb{R}}\chi^i_t(y,\xi)\kappa^{\delta}(\xi,\eta)\mathrm{d}\xi$ and $\bar{\kappa}^{\delta}_{i,t}=\kappa^{\delta}(\rho^i(y,t),\eta)$, for $i\in\{1,2\}$.

For every  $M\in(0,\infty)$ and $\delta\in(0,\frac{1}{M})$, the definition of $\chi_t^{2,\delta}$, $\bar{\kappa}^{\delta}_{1,t}$ and $\zeta_M$ imply that there exists a constant $c>0$ such that
\begin{align}\label{rho-1}
  &\Big|\int^t_0\int_{\mathbb{R}}\int_{\mathbb{T}^2}(2\chi^{2,\delta}_t-1)  \nabla_y\cdot(\rho^1  \mathcal{V}[\rho^1] ) \bar{\kappa}^{\delta}_{1,t}(\zeta^M(\eta)-\zeta^M(\rho^1))\mathrm{d}y\mathrm{d}\eta \mathrm{d}s\Big|\\
  \notag
  \leq&\int^t_0\int_{\mathbb{R}}\int_{\mathbb{T}^2}\Big| \nabla_y\cdot(\rho^1 \mathcal{V}[\rho^1]) \bar{\kappa}^{\delta}_{1,t}(\zeta^M(\eta)-\zeta^M(\rho^1))\Big|\mathrm{d}y\mathrm{d}\eta \mathrm{d}s\\
  \notag
 \leq &c\delta \int^t_0 \int_{\mathbb{T}^2}|\nabla_y\cdot(\rho^1 \mathcal{V}[\rho^1] )| (MI_{\{\frac{1}{M}-\delta<\rho^1<\frac{2}{M}+\delta\}}+I_{\{M-\delta<\rho^1<M+1+\delta\}} ) \mathrm{d}y \mathrm{d}s.
\end{align}
For every fixed $M\in(0,\infty)$, following a computation similar to \eqref{eq:interaction is L1}, we get for all $\delta\in(0,1/M)$, there exists  a constant $C$  independent of $\delta$ such that,
\begin{equation}\label{WS V q1}
  \int^t_0 \int_{\mathbb{T}^2}|\nabla_y\cdot(\rho^1 \mathcal{V}[\rho^1] )| (MI_{\{\frac{1}{M}-\delta<\rho^1<\frac{2}{M}+\delta\}}+I_{\{M-\delta<\rho^1<M+1+\delta\}} ) \mathrm{d}y \mathrm{d}s<C.
\end{equation}
Therefore, taking $\delta\rightarrow 0$ in \eqref{rho-1}, we obtain
\begin{align}\label{t-2}
  \lim_{\delta\rightarrow 0}\left|\int^t_0 \int_{\mathbb{R}}\int_{\mathbb{T}^2}\bar{\kappa}^{\delta}_{1,s}(2\chi^{2,\delta}_s-1)\nabla_y\cdot(\rho^1\mathcal{V}[\rho^1]) (\zeta^M(\eta)-\zeta^M(\rho^1))\mathrm{d}y\mathrm{d}\eta \mathrm{d}s\right|=0.
  \end{align}
  Similarly, for the second term of \eqref{WS kernel delta},
  \begin{align}\label{t-3}
  \lim_{\delta\rightarrow 0}\left|\int^t_0 \int_{\mathbb{R}}\int_{\mathbb{T}^2}
  \bar{\kappa}^{\delta}_{2,s}(2\chi^{1,\delta}_s-1)\nabla_y\cdot(\rho^2\mathcal{V}[\rho^2]) (\zeta^M(\eta)-\zeta^M(\rho^2))\mathrm{d}y\mathrm{d}\eta \mathrm{d}s\right|=0.
  \end{align}

Furthermore, refer to \cite[(4.22)]{FG24}, when $2\delta<\rho^2(y,s)$, we have that 
\begin{align*}
\int_{\mathbb{R}^2}\kappa^{\delta}(\xi-\eta)\kappa^{\delta}(\eta-\xi')\chi^{2}_s(y,\xi')\mathrm{d}\eta
\mathrm{d}\xi'=
\left\{
  \begin{array}{ll}
   0, & {\rm{if}}\ \xi\leq -2\delta\ {\rm{or}}\ \xi\geq \rho^2(y,s)+2\delta, \\
    1/2, & {\rm{if}}\ \xi=0\ {\rm{or}}\ \xi\geq \rho^2(y,s), \\
    1, & {\rm{if}}\ 2\delta<\xi< \rho^2(y,s)-2\delta.
  \end{array}
\right.
\end{align*}
With the aid of the fact that $\zeta^M(0)=0$, we obtain that for almost every $(t,x)\in[0,T]\times\mathbb{T}^2$, 
\begin{align}\label{t-4}
  \lim_{\delta\rightarrow 0}\left(\int_{\mathbb{R}}\bar{\kappa}^{\delta}_{1,s}(2\chi^{2,\delta}_s-1)\mathrm{d}\eta\right)\zeta_M(\rho^1)
  =(I_{\rho^1=\rho^2}+2I_{\rho^1<\rho^2}-1)\zeta^M(\rho^1).
\end{align}
For every fixed $M\in(0,\infty)$ and $\delta\in(0,1/M)$,  the definition of $\chi_t^{2,\delta}$, $\bar{\kappa}^{\delta}_{1,t}$ and $\zeta_M$ imply that
\begin{align}
&\left|\left(\int_{\mathbb{R}}\bar{\kappa}^{\delta}_{1,s}(2\chi^{2,\delta}_s-1)\mathrm{d}\eta\right)\nabla_y\cdot
(\rho^1\mathcal{V}[\rho^1]) \zeta_M(\rho^1)\right|\nonumber\\
\leq& |\nabla_y\cdot(\rho^1 \mathcal{V}[\rho^1] )| (MI_{\{\frac{1}{M}<\rho^1<\frac{2}{M}\}}+I_{\{M<\rho^1<M+1\}} ),
\end{align}
one may find the above inequality is familiar with \eqref{WS V q1}. Thus, similar estimates to \eqref{WS V q1} show that $\nabla_y\cdot(\rho^1\mathcal{V}[\rho^1]) \zeta_M(\rho^1)$ is $L^1_{[0,T]\times \mathbb{T}^2}$ integral. Combining (\ref{t-2}), (\ref{t-3}), (\ref{t-4}),  after passing to $\delta\rightarrow 0$, by the dominated convergence theorem,  we have
\begin{equation}\label{WS K M}
  \lim_{\delta\rightarrow 0}( \lim_{\varepsilon\rightarrow 0}\mathbb{I}^{\varepsilon,\delta,M}_{t,ker})=K_{11}+K_{12}+K_{21}+K_{22},
\end{equation}
 where
\begin{align*}
  K_{11}+K_{12}
  =&\int^t_0\int_{\mathbb{T}^2}(I_{\rho^1=\rho^2}+2I_{ \rho^1<\rho^2}-1)\zeta^M(\rho^1)
   \nabla\rho^1\cdot(\mathcal{V}[\rho^1])\mathrm{d}y\mathrm{d}s\\
   &+\int^t_0\int_{\mathbb{T}^2}(I_{\rho^1=\rho^2}+2I_{ \rho^1<\rho^2}-1)\zeta^M(\rho^1)
   \rho^1\nabla\cdot \mathcal{V}[\rho^1]\mathrm{d}y\mathrm{d}s\\
    =&\int^t_0\int_{\mathbb{T}^2}(I_{\rho^1=\rho^2}+2I_{ \rho^1<\rho^2}-1)\zeta^M(\rho^1)
   \nabla\rho^1\cdot\mathcal{V}[\rho^1]\mathrm{d}y\mathrm{d}s\\
   &+\int^t_0\int_{\mathbb{T}^2}(I_{\rho^1=\rho^2}+2I_{ \rho^1<\rho^2}-1)\zeta^M(\rho^1)
   \kappa_1\left(|\rho^1|^2-\rho^1\|\rho_0\|_{L^1_{\mathbb{T}^2}}\right)\mathrm{d}y\mathrm{d}s
\end{align*}
and
\begin{align*}
  K_{21}+K_{22}
   = &\int^t_0\int_{\mathbb{T}^2}(I_{\rho^2=\rho^1}+2I_{ \rho^2<\rho^1}-1)\zeta^M(\rho^2)
   \nabla\rho^2\cdot(\mathcal{V}[\rho^2])\mathrm{d}y\mathrm{d}s\\
    &+\int^t_0\int_{\mathbb{T}^2}(I_{\rho^2=\rho^1}+2I_{ \rho^2<\rho^1}-1)\zeta^M(\rho^2)
    \rho^2\nabla\cdot \mathcal{V}[\rho^2]\mathrm{d}y\mathrm{d}s\\
    = &\int^t_0\int_{\mathbb{T}^2}(I_{\rho^2=\rho^1}+2I_{ \rho^2<\rho^1}-1)\zeta^M(\rho^2)
   \nabla\rho^2\cdot\mathcal{V}[\rho^2]\mathrm{d}y\mathrm{d}s\\
    &+\int^t_0\int_{\mathbb{T}^2}(I_{\rho^2=\rho^1}+2I_{ \rho^2<\rho^1}-1)\zeta^M(\rho^2)
    \kappa_1\left(|\rho^2|^2-\rho^2\|\rho_0\|_{L^1_{\mathbb{T}^2}}\right)\mathrm{d}y\mathrm{d}s.
\end{align*}

For every $i\in\{1,2\}$ and $t\in[0,T]$, combined with the result $\|\rho^i(t)\|_{L^1_{\mathbb{T}^2}}=\|\rho^i_0\|_{L^1_{\mathbb{T}^2}}$,
 it then follows from \eqref{L2-ES} that 
 $\rho^i$ is $ L^2_{[0,T]}L^2_{\mathbb{T}^2}$-integrable.
Thanks to the dominated convergence theorem, we deduce that 
	\begin{align*} 
	\lim_{M\to\infty}\zeta^{M}(\rho^i)\left(|\rho^i|^2-\rho^i\|\rho_0\|_{L^1_{\mathbb{T}^2}}\right)=|\rho^i|^2-\rho^i\|\rho_0\|_{L^1_{\mathbb{T}^2}},\text{ strongly in }L^{1}_{\mathbb{T}^2\times[0,T]}.
	\end{align*}
Therefore, taking $M\rightarrow \infty$ in $K_{12}+K_{22}$, by the dominated convergence theorem and the fact $\sgn(\rho^2-\rho^1)=I_{\rho^1=\rho^2}+2I_{\rho^1<\rho^2}-1$, we obtain
\begin{align}\label{G1 2}
  \lim_{M\rightarrow\infty}(K_{12}+K_{22})
   =&\kappa_1\int^t_0\int_{\mathbb{T}^2}\sgn(\rho^2-\rho^1)
   (|\rho^1|^2-
   |\rho^2|^2)\mathrm{d}y\mathrm{d}s\notag\\
   &-\kappa_1\int^t_0\int_{\mathbb{T}^2}\sgn(\rho^2-\rho^1)\|\rho_0\|_{L^1_{\mathbb{T}^2}}(\rho^1-\rho^2)\notag\\
   \leq&\kappa_1\int^t_0\int_{\mathbb{T}^2}\sgn(\rho^2-\rho^1)
   (\rho^1-\rho^2)(\rho^1+\rho^2)\mathrm{d}y\mathrm{d}s\notag\\
   &+C(\rho_0,\kappa_1)\int^t_0\int_{\mathbb{T}^2}|\rho^1-\rho^2|.
   \end{align}
   We will revisit the first term in \eqref{G1 2} later on. 
   
 In the following, we estimate $K_{11}+K_{21}$ of \eqref{WS K M}. Let $J$ be defined by $$J=\int^t_0\int_{\mathbb{T}^2}(I_{\rho^1=\rho^2}+2I_{ \rho^1<\rho^2}-1)\zeta^M(\rho^2)
   \nabla\rho^2\cdot\mathcal{V}[\rho^1]\mathrm{d}y\mathrm{d}s,$$
then $K_{11}+K_{21}$ can be divided into the following two terms
\begin{equation}\label{WS K11-J}
K_{11}-J=\int_{0}^{t} \int_{\mathbb{T}^2} (I_{\rho^1=\rho^2}+2I_{ \rho^1<\rho^2}-1)\left[\zeta^{M}\left(\rho^{1}\right) \nabla \rho^{1}-\zeta^{M}\left(\rho^{2}\right) \nabla \rho^{2}\right]\cdot\mathcal{V}[\rho^{1}] \mathrm{d}y \mathrm{d} s,
\end{equation}
and
\begin{equation}\label{WS K21+J}
     K_{21}+J =\int^t_0\int_{\mathbb{T}^2}(I_{\rho^2=\rho^1}+2I_{ \rho^2<\rho^1}-1)\zeta^M(\rho^2)
   \nabla\rho^2\cdot \mathcal{V}[\rho^1-\rho^2]\mathrm{d}y\mathrm{d}s.
   \end{equation}

With the help of the assumption condition that $\rho^2\in\mathcal{C}_0$, we know that $\rho^2\in L^\infty_{[0,T]}W^{1,p'}_{\mathbb{T}^2}$, for some $p'>2$.  For every $i\in\{1,2\}$, it then follows from H\"older's inequality and the convolutional Young inequality that
\begin{align*}
  &\int^t_0\int_{\mathbb{T}^2}|\nabla\rho^2\cdot\mathcal{V}[\rho^i]|\mathrm{d}y\mathrm{d}t\leq \int^t_0\|\nabla\rho^2\|_{L^{p'}_{\mathbb{T}^2}(\mathbb{R}^2)}\|\mathcal{V}[\rho^i]\|_{L^p_{\mathbb{T}^2}(\mathbb{R}^2)}\mathrm{d}s\\
  \leq& \|\rho^2\|_{L^\infty_{[0,T]}W^{1,p'}_{\mathbb{T}^2}}\|V \|_{L^p_{\mathbb{T}^2}(\mathbb{R}^2)}\|\rho^i_0\|_{L^1_{\mathbb{T}^2}}T<\infty,
\end{align*}
where $\frac{1}{p}+\frac{1}{p'}=1$. Taking $M\rightarrow \infty$, by the dominated convergence theorem and the definition of $\zeta^M$, for every $i\in\{1,2\}$, we have
\begin{align*}
  &\lim_{M\rightarrow\infty}\int^T_0\int_{\mathbb{T}^2}|\zeta^{M}(\rho^i)\nabla\rho^2\cdot\mathcal{V}[\rho^i] -\nabla\rho^2\cdot\mathcal{V}[\rho^i]|\mathrm{d}y\mathrm{d}s\\
\leq &\lim_{M\rightarrow\infty}\int^T_0\int_{\mathbb{T}^2}(I_{\{|\rho^i|\leq \frac{1}{M}\}}+I_{\{|\rho^2|\geq M+1\}})|\nabla\rho^2\cdot\mathcal{V}[\rho^i] |\mathrm{d}y\mathrm{d}s= 0,
\end{align*}
which implies
	\begin{align*} \lim_{M\to\infty}\zeta^{M}(\rho^2)\nabla\rho^2\cdot\mathcal{V}[\rho^i]=\nabla\rho^2\cdot\mathcal{V}[\rho^i], \text{ strongly in }L^{1}\left(\mathbb{T}^{2}\times[0,T]\right).
	\end{align*}
Moreover,  taking $M\rightarrow \infty$ in \eqref{WS K21+J},  by the dominated convergence theorem and the fact $\sgn(\rho^2-\rho^1)=I_{\rho^1=\rho^2}+2I_{\rho^1<\rho^2}-1$, we obtain, for every $t\in[0,T]$,
\begin{equation}\label{G3}
 \lim_{M\rightarrow\infty}(K_{21}+J) =\int^t_0\int_{\mathbb{T}^2}\sgn(\rho^2-\rho^1)\nabla\rho^2
  \cdot \mathcal{V}[\rho^1-\rho^2].
\end{equation}
For every $t\in[0,T]$, by H\"{o}lder's inequality and the convolutional Young inequality, we get
\begin{align}\label{G3-1}
&\int^t_0\int_{\mathbb{T}^2}\sgn(\rho^2-\rho^1)\nabla\rho^2
   \mathcal{V}[\rho^1-\rho^2]\mathrm{d}y\mathrm{d}s
\leq\|\rho^2\|_{L^\infty_{[0,T]}W^{1,p'}_{\mathbb{T}^2}}
\int^t_0\|\mathcal{V}[\rho^1-\rho^2]\|_{L^p_{\mathbb{T}^2}}\nonumber\\
\leq&\|\rho^2\|_{L^\infty_{[0,T]}W^{1,p'}_{\mathbb{T}^2}}\|V\|_{L^p_{\mathbb{T}^2}(\mathbb{R}^2)}\int^t_0\|\rho^1-\rho^2\|_{L^1_{\mathbb{T}^2}}\mathrm{d}s.
\end{align}

It remains to estimate the term $K_{11}-J$. For every $i\in\{1,2\}$ and $M\in(0,\infty)$, following a similar estimate to \eqref{WS V q1}, we obtain that $\zeta^M(\rho^i)
   \nabla\rho^iV\ast \rho^1$ is $L^1_{[0,T]\times\mathbb{T}^2}$ integral. For every $\delta\in(0,1)$, let $\sgn^{\delta}=(\sgn\ast\kappa^{\delta})$,
 it then follows from the dominated convergence theorem and  the fact $\sgn(\rho^2-\rho^1)=I_{\rho^1=\rho^2}+2I_{\rho^1<\rho^2}-1$ that,
\begin{align}\label{K11 J}
  K_{11}-J=& \int_{0}^{t} \int_{\mathbb{T}^2} \operatorname{sgn}\left(\rho^{2}-\rho^{1}\right)\left[\zeta^{M}\left(\rho^{1}\right) \nabla \rho^{1}-\zeta^{M}\left(\rho^{2}\right) \nabla \rho^{2}\right]\cdot \mathcal{V}[\rho^{1}] \mathrm{d}y \mathrm{d}s \nonumber\\
=& \lim _{\delta \rightarrow 0} \int_{0}^{t} \int_{\mathbb{T}^2} \operatorname{sgn}^{\delta}\left(\rho^{2}-\rho^{1}\right)\left[\zeta^{M}\left(\rho^{1}\right) \nabla \rho^{1}-\zeta^{M}\left(\rho^{2}\right) \nabla \rho^{2}\right]\cdot \mathcal{V}[\rho^{1}] \mathrm{d}y \mathrm{d}s \nonumber\\
=& \lim _{\delta \rightarrow 0} \int_{0}^{t} \int_{\mathbb{T}^2} \operatorname{sgn}^{\delta}\left(\rho^{2}-\rho^{1}\right) \nabla\left\{\int^{\rho^{1}}_0 \zeta^{M}(\xi) \mathrm{d}\xi-\int^{\rho^{2}}_0 \zeta^{M}(\xi) \mathrm{d}\xi\right\}\cdot\mathcal{V}[\rho^{1}] \mathrm{d}y \mathrm{d}s \nonumber\\
=&-\lim _{\delta \rightarrow 0} \int_{0}^{t} \int_{\mathbb{T}^{2}}\left\{\nabla \operatorname{sgn}^{\delta}\left(\rho^{2}-\rho^{1}\right)\left[\int_{\rho^{2}}^{\rho^{1}} \zeta^{M}(\xi) \mathrm{d}\xi\right]\cdot \mathcal{V}[\rho^{1}]\right\} \mathrm{d}y \mathrm{d}s \nonumber\\
&-\lim _{\delta \rightarrow 0} \kappa_1\int_{0}^{t} \int_{\mathbb{T}^{2}}\left\{\operatorname{sgn}^{\delta}\left(\rho^{2}-\rho^{1}\right)\left[\int_{\rho^{2}}^{\rho^{1}} \zeta^{M}(\xi) \mathrm{d}\xi\right] \rho^{1}\right\} \mathrm{d}y \mathrm{d}s\notag\\
&+\lim_{\delta \rightarrow 0} \kappa_1\int_{0}^{t} \int_{\mathbb{T}^{2}}\left\{\operatorname{sgn}^{\delta}\left(\rho^{2}-\rho^{1}\right)\left[\int_{\rho^{2}}^{\rho^{1}} \zeta^{M}(\xi) \mathrm{d}\xi\right] \|\rho_0\|_{L^1_{\mathbb{T}^2}}\right\} \mathrm{d}y \mathrm{d}s
\end{align}
For every $i\in\{1,2\}$,  \eqref{L2-ES} implies that $\rho^i \rho^1$ is $L^1_{[0,T]\times\mathbb{T}^2}$ integrable. Therefore,
taking $\delta\rightarrow 0$ and $M\rightarrow \infty$, by the dominated convergence theorem and the definition of $\zeta^M$, we have that almost surely, 
\begin{align}\label{K11 J 1}
  -\lim_{M\rightarrow \infty}\lim_{\delta\rightarrow 0}\kappa_1\int^t_0\int_{\mathbb{T}^2}\sgn^{\delta}(\rho^2-\rho^1)\int^{\rho^1}_{\rho^2}\zeta^M(\xi)\mathrm{d}\xi
  \rho^1\mathrm{d}y\mathrm{d}s=-\kappa_1\int^t_0\int_{\mathbb{T}^2}\sgn(\rho^2-\rho^1)(\rho^1-\rho^2)
  \rho^1\mathrm{d}y\mathrm{d}s.
\end{align}
Combining \eqref{K11 J 1} with the first term in \eqref{G1 2}, we derive 
\begin{align*}
	&-\kappa_1\int^t_0\int_{\mathbb{T}^2}\sgn(\rho^2-\rho^1)(\rho^1-\rho^2)
  \rho^1\mathrm{d}y\mathrm{d}s+\kappa_1\int^t_0\int_{\mathbb{T}^2}\sgn(\rho^2-\rho^1)
   (\rho^1-\rho^2)(\rho^1+\rho^2)\mathrm{d}y\mathrm{d}s\\
   =&\kappa_1\int^t_0\int_{\mathbb{T}^2}\sgn(\rho^2-\rho^1)
   (\rho^1-\rho^2)\rho^2\mathrm{d}y\mathrm{d}s\\
   \leq&|\kappa_1|\|\rho^2\|_{L^\infty_{[0,T]}W^{1,p'}_{\mathbb{T}^2}}\int^t_0\int_{\mathbb{T}^2}
   |\rho^1-\rho^2|\mathrm{d}y\mathrm{d}s. 
\end{align*}
Furthermore, we have 
\begin{align*}
&\lim_{\delta \rightarrow 0} \kappa_1\int_{0}^{t} \int_{\mathbb{T}^{2}}\left\{\operatorname{sgn}^{\delta}\left(\rho^{2}-\rho^{1}\right)\left[\int_{\rho^{2}}^{\rho^{1}} \zeta^{M}(\xi) \mathrm{d}\xi\right] \|\rho_0\|_{L^1_{\mathbb{T}^2}}\right\} \mathrm{d}y \mathrm{d}s\\
\leq&C(\rho_0)\int^t_0\int_{\mathbb{T}^2}
   |\rho^1-\rho^2|\mathrm{d}y\mathrm{d}s. 
\end{align*}

Finally, we deal with the first term of the last equality in  \eqref{K11 J}. For every $i\in\{1,2\}$, \cite[Lemma 3.3]{WWZ22} shows that the weak derivative $\nabla\rho^i=2\sqrt{\rho^{i}}\nabla\sqrt{\rho^{i}}$ holds for almost every $(x,t)\in\mathbb{T}^2\times[0,T]$, it gives that
\begin{align}\label{K11 J 2}
 \int_{0}^{t} \int_{\mathbb{T}^2}\left\{\nabla \operatorname{sgn}^{\delta}\left(\rho^{2}-\rho^{1}\right)\left(\int_{\rho^{2}}^{\rho^{1}} \zeta^{M}(\xi) \mathrm{d}\xi\right) \mathcal{V}[\rho^{1}]\right\} \mathrm{d}y \mathrm{d}s
   =K_3+K_4,
\end{align}
where
\begin{equation}\label{WS K3}
  K_3=4\int^t_0\int_{\mathbb{T}^2}\kappa^{\delta}(\rho^2-\rho^1)
   \sqrt{\rho^2}(\nabla\sqrt{\rho^2}-
   \nabla\sqrt{\rho^1})
   \Big(\int^{\rho^2}_{\rho^1}\zeta^M(\xi)\mathrm{d}\xi\Big)
   \mathcal{V}[\rho^1]\mathrm{d}y\mathrm{d}s,
\end{equation}
and
\begin{equation}\label{WS K4}
  K_4=4\int^t_0\int_{\mathbb{T}^2}\kappa^{\delta}(\rho^2-\rho^1)
   (\sqrt{\rho^2}-\sqrt{\rho^1}
   )\nabla\sqrt{\rho^1}
   \Big(\int^{\rho^2}_{\rho^1}\zeta^M(\xi)\mathrm{d}\xi\Big)
   \mathcal{V}[\rho^1]\mathrm{d}y\mathrm{d}s.
\end{equation}

With the help of the assumption condition that $\rho^2\in\mathcal{C}_0$, we know that $\rho^2\in L^\infty_{[0,T]}L^\infty_{\mathbb{T}^2}$. For every $M\in(0,\infty)$ and $\delta\in(0,1)$, it then follows from the definition of $\kappa^{\delta}$ and $\zeta^M$, and  Young's inequality that
  \begin{align*}
  K_3\leq&4\|\rho^2\|_{L^\infty([0,T];L^\infty(\mathbb{T}^2))}^{\frac{1}{2}}
   \int^t_0\int_{\mathbb{T}^2}\kappa^{\delta}(\rho^2-\rho^1)
   |\nabla\sqrt{\rho^2}-\nabla\sqrt{\rho^1}||\rho^2-{\rho^1}|
   |\mathcal{V}[\rho^1]|\mathrm{d}y\mathrm{d}s\\
   \leq&4\|\rho^2\|_{L^\infty([0,T];L^\infty(\mathbb{T}^2))}^{\frac{1}{2}}\int^t_0\int_{\mathbb{T}^2}
  I_{\{0<|\rho^2-\rho^1|\leq\delta\}} |\nabla\sqrt{\rho^2}||V\ast\rho^1|+|\nabla\sqrt{\rho^1}||\mathcal{V}[\rho^1]|\mathrm{d}y\mathrm{d}s\\
  \leq & 2 \|\rho^2\|_{L^\infty([0,T];L^\infty(\mathbb{T}^2))}^{\frac{1}{2}}\int^t_0\int_{I_{\{0<|\rho^2-\rho^1|\leq\delta\}}}
   2|\mathcal{V}[\rho^1]|^2+ |\nabla\sqrt{\rho^2}|^2+|\nabla\sqrt{\rho^1}|^2\mathrm{d}y\mathrm{d}s.
\end{align*}
Since $\mathcal{V}[\rho^1]$ is  $L^2_{[0,T]}L^2_{\mathbb{T}^2}(\mathbb{R}^2)$-integrable, combining the result that, for every $i\in\{1,2\}$,  $\sqrt{\rho^i}\in L^2_{[0,T]}H^1_{\mathbb{T}^2}$, taking $\delta\rightarrow 0$ in \eqref{K3}, it gives that
\begin{equation}\label{K3}
  \lim_{\delta\rightarrow 0}K_3=0. 
\end{equation}
Similarly, for every $M\in(0,\infty)$ and $\delta\in(0,1)$, by the definition of $\zeta^M$ and $\kappa^{\delta}$, the $1/2$-H\"older continuous property of $\sqrt{\cdot}$ function and Young's inequality, we get there exists a constant $c$ such that
\begin{align*}
  K_4=&2\int^t_0\int_{\mathbb{T}^2}\kappa^{\delta}(\rho^2-\rho^1)
   (\sqrt{\rho^2}-\sqrt{\rho^1}
   )\nabla\sqrt{\rho^1}
   \Big(\int^{\rho^2}_{\rho^1}\zeta^M(\xi)\mathrm{d}\xi\Big)
   \mathcal{V}[\rho^1]\mathrm{d}y\mathrm{d}s\\
   \leq&2\int^t_0\int_{\mathbb{T}^2}\kappa^{\delta}(\rho^2-\rho^1)
   |\sqrt{\rho^2}-\sqrt{\rho^1}
   ||\nabla\sqrt{\rho^1}|
   |\rho^2-\rho^1|
   |\mathcal{V}[\rho^1]|\mathrm{d}y\mathrm{d}s\\
   \leq&2c\delta^{\frac{1}{2}}\int^t_0\int_{\mathbb{T}^2}I_{\{0<|\rho^2-\rho^1|\leq\delta\}}
   |\nabla\sqrt{\rho^1}||\mathcal{V}[\rho^1]|\mathrm{d}y\mathrm{d}s\\
   \leq&c\delta^{\frac{1}{2}}\int^t_0\int_{I_{\{0<|\rho^2-\rho^1|\leq\delta\}}}
   |\nabla\sqrt{\rho^1}|^2+|\mathcal{V}[\rho^1]|^2\mathrm{d}y\mathrm{d}s,
\end{align*}
taking $\delta\rightarrow 0$ in \eqref{K3}, it gives that
\begin{equation}\label{K4}
  \lim_{\delta\rightarrow 0}K_4=0.
\end{equation}

In combination  \eqref{WS K M}, \eqref{G1 2}, \eqref{G3-1},  \eqref{K11 J}, \eqref{K11 J 1}, \eqref{K11 J 2}, \eqref{K3} and \eqref{K4} prove that, for every $t\in[0,T]$,
\begin{align}\label{V-2}
  \lim_{M\rightarrow\infty}\lim_{\delta\rightarrow 0}( \lim_{\varepsilon\rightarrow 0}\mathbb{I}^{\delta,M}_{t,ker})
  \leq &C\left(\|V\|_{L^p_{\mathbb{T}^2}(\mathbb{R}^2)},\|\rho^2\|_{L^\infty_{[0,T]}W^{1,p'}_{\mathbb{T}^2}}\right)\cdot\int^t_0\int_{\mathbb{T}^2}|\rho^1-\rho^2|\mathrm{d}y\mathrm{d}s.
\end{align}

\textbf{Conclusion.}
It follows from properties of kinetic functions that
\begin{equation}
\int_{\mathbb{R}}|\chi^1-\chi^2|^2\mathrm{d}\xi = |\rho^1(x,t)-\rho^2(x,t)|.
\end{equation}
In combination with \eqref{qq-28}, \eqref{rr-11}, we find that for every $t\in[0,T]$,

\begin{align}\label{r-13}
  \|\rho^1(t)-\rho^2(t)\|_{L^1_{\mathbb{T}^2}}
\leq&\|\rho^1_0-\rho^2_0\|_{L^1_{\mathbb{T}^2}}+
\|\rho^2\|_{L^\infty_{[0,T]}W^{1,p'}_{\mathbb{T}^2}}\|V\|_{L^p_{\mathbb{T}^2}(\mathbb{R}^2)}\int_{0}^{t}\|\rho^1-\rho^2\|_{L^{1}_{\mathbb{T}^2}}\mathrm{d}s.
\end{align}
Applying Gronwall's inequality, it gives that
\begin{align*}
\|\rho^1(t)-\rho^2(t)\|_{L^1_{\mathbb{T}^2}}\leq\|\rho^1_0-\rho^2_0\|_{L^1_{\mathbb{T}^2}}  \exp\Big\{C\left(\|V\|_{L^p_{\mathbb{T}^2}(\mathbb{R}^2)},\|\rho^2\|_{L^\infty_{[0,T]}W^{1,p'}_{\mathbb{T}^2}}\right)\Big\}. 
\end{align*}
This completes the proof of \eqref{qq-31-1}.  
\end{proof}

\section{Upper bound for large deviations}\label{sec:LDP-UB} In this section, the upper bound of large deviations is obtained by the exponential martingale approach. We recall from \eqref{I0-intro} and \eqref{Efin-intro} that, if we define $\Lambda:\mathcal{E}_{\mathrm{fin}}\times C^{\infty}([0,T]\times\mathbb{T}^2)\rightarrow \mathbb{R}$ be defined by
\begin{equation}\label{Lambda}
\Lambda(\rho,\varphi)=F_{\rho}(\varphi)-\frac{1}{2} \left\langle \sqrt{\rho}\nabla \varphi,\sqrt{\rho}\nabla\varphi\right\rangle_{L_{[0,T]}^2L^2_{\mathbb{T}^2}},
\end{equation}
where $F_{\rho}(\varphi)$ is defined by 
\begin{align}\label{F}
F_{\rho}(\varphi) := \langle \rho,\varphi \rangle_{L^2_{\mathbb{T}^2}}\Big|^T_0 & - \langle \rho,(\partial_t + \Delta)\varphi\rangle_{L_{[0,T]}^2L^2_{\mathbb{T}^2}} - \langle\rho \mathcal{V}[\rho],\nabla\varphi\rangle_{L_{[0,T]}^2L^2_{\mathbb{T}^2}},
\end{align}
then the rate function $\mathcal{I}$ has the following equivalent representation
\begin{equation}
\mathcal{I}(\rho)=\sup_{\varphi\in C^{\infty}([0,T]\times\mathbb{T}^2)}\Lambda(\rho,\varphi).
\end{equation}

\begin{lemma}[L.S.C. of Rate Function]\label{Iup L-S-C}
Let $\mathcal{V}[\cdot]$ be defined by \eqref{ker-V} such that \eqref{eq:small-KS} holds. For each fixed $\varphi \in C^{\infty}([0,T]\times\mathbb{T}^2)$, $\Lambda(\cdot,\varphi)$ as defined in \eqref{Lambda} is continuous on $L^1([0,T] \times \mathbb{T}^2)$, and consequently, the rate function $\mathcal{I}$ is lower semi-continuous  with respect to the strong $L^1$ topology, namely, for every $(\rho_n)_{n\geq1}$ and $\rho$ in $\mathcal{E}_{\mathrm{fin}}$, 
  \begin{equation}\label{IUP LSC}
\lim_{n \to \infty}\rho_n = \rho \ \text{in} \ L^1_{[0,T]}L^1_{\mathbb{T}^2} \quad \Rightarrow \quad \liminf_{n \to \infty}\mathcal{I}(\rho_n)\geq \mathcal{I}(\rho).
  \end{equation}
\end{lemma}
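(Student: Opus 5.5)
The plan is to prove continuity of $\Lambda(\cdot,\varphi)$ on $L^1_{[0,T]}L^1_{\mathbb{T}^2}$ term by term, and then obtain lower semicontinuity of $\mathcal{I}$ as a supremum of continuous functionals. Fix $\varphi\in C^\infty([0,T]\times\mathbb{T}^2)$ and let $\rho_n\to\rho$ in $L^1_{[0,T]}L^1_{\mathbb{T}^2}$, with all functions in $\mathcal{E}_{\mathrm{fin}}$.

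The linear terms are straightforward. Since $\varphi$, $(\partial_t+\Delta)\varphi$ and $\nabla\varphi$ are bounded, we have
\[
|\langle \rho_n-\rho,(\partial_t+\Delta)\varphi\rangle|+\tfrac12|\langle (\rho_n-\rho)\nabla\varphi,\nabla\varphi\rangle|\lesssim_\varphi \|\rho_n-\rho\|_{L^1_{[0,T]}L^1_{\mathbb{T}^2}}\to0 .
\]
The boundary term $\langle\rho,\varphi\rangle|_0^T$ involves time traces and is not $L^1_{t,x}$-continuous by itself. I would treat it as in the formulation used throughout the paper. The time-zero trace is the fixed datum $\rho_0$. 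The terminal value $\langle\rho(T),\varphi(T)\rangle$ I would either handle by noting that the class relevant for the upper bound consists of paths with this common initial datum, or by pairing with a time cutoff and using lower semicontinuity only. This is a point to check against the convention in \eqref{I0-intro}.

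The main obstacle is the nonlinear interaction term $\langle\rho_n\mathcal{V}[\rho_n],\nabla\varphi\rangle$. Here $\mathcal{V}$ is only $L^p$ with $p<2$, so $L^1$ convergence alone does not control the product. I would symmetrize. Writing $V=-(\kappa_1+\kappa_2\mathbb{J})\nabla\mathcal{G}$, which is odd, we get
\[
\int_0^T\!\!\int\!\!\int \rho(x)\rho(y)V(x-y)\cdot\nabla\varphi(x)\,dx\,dy\,dt=\tfrac12\int_0^T\!\!\int\!\!\int\rho(x)\rho(y)V(x-y)\cdot\big(\nabla\varphi(x)-\nabla\varphi(y)\big).
\]
The kernel $H_\varphi(x,y)=V(x-y)\cdot(\nabla\varphi(x)-\nabla\varphi(y))$ is bounded, because $|V(z)|\lesssim |z|^{-1}$ and $|\nabla\varphi(x)-\nabla\varphi(y)|\lesssim|x-y|$. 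In the Keller--Segel part it is also continuous off the diagonal; this is the Delort/Schochet-type trick. Then
\[
\Big|\iint(\rho_n\otimes\rho_n-\rho\otimes\rho)H_\varphi\Big|\le \|H_\varphi\|_\infty\,\|\rho_n-\rho\|_{L^1_x}\big(\|\rho_n\|_{L^1_x}+\|\rho\|_{L^1_x}\big).
\]
After integrating in time and using that the $L^1_x$ norms are bounded in $L^\infty_t$ for $\mathcal{E}_{\mathrm{fin}}$, this tends to zero. If uniform mass bounds are not available, I would truncate in time on sets where $\|\rho_n(t)\|_{L^1}$ is large and use $L^1$ convergence along a subsequence to conclude.

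Finally, since $\mathcal{I}=\sup_\varphi\Lambda(\cdot,\varphi)$ on $\mathcal{E}_{\mathrm{fin}}$, and a supremum of continuous functionals is lower semicontinuous, we obtain for each $\varphi$
\[
\liminf_n\mathcal{I}(\rho_n)\ge\lim_n\Lambda(\rho_n,\varphi)=\Lambda(\rho,\varphi).
\]
Taking the supremum over $\varphi$ gives \eqref{IUP LSC}.
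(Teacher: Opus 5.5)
Your argument is correct in its main line, but it handles the interaction term by a genuinely different route from the paper.

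\textbf{The paper's route.} It passes to a subsequence with $\sup_k\mathcal{I}(\rho_k)<\infty$ and writes each $\rho_k$, via Lemma~\ref{ske-lem}, as a weak solution of the controlled skeleton equation. It then reruns the entropy estimate of Proposition~\ref{entropy estimate} to get a uniform bound on $\|\nabla\sqrt{\rho_k}\|_{L^2_{[0,T]}L^2_{\mathbb{T}^2}}$; this is where \eqref{eq:small-KS} enters, to absorb the attractive Keller--Segel part. Finally it splits $\rho_k\mathcal{V}[\rho_k]-\rho\mathcal{V}[\rho]=(\rho_k-\rho)\mathcal{V}[\rho]+\rho_k\mathcal{V}[\rho_k-\rho]$. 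The first piece goes by weak $L^2$ convergence. The second goes by Gagliardo--Nirenberg ($\|\rho_k\|_{L^{p'}}$, $p'>2$, controlled by the Fisher information), Young's inequality with $V\in L^p$, $p<2$, and H\"older in time.

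\textbf{Your route.} Your symmetrization uses that $V$ is odd with $|V(z)|\lesssim|z|^{-1}$, which turns the interaction into a bilinear form with a bounded kernel. The identity is legitimate on $\mathcal{E}_{\mathrm{fin}}$, since $\iint\rho(x)\rho(y)|V(x-y)|\,dx\,dy\le\|V\|_{L^1}\|\rho\|_{L^2}^2$ is integrable in time by \eqref{L2-ES}, so Fubini applies. You need only a uniform bound on $\|\rho_n\|_{L^\infty_{[0,T]}L^1_{\mathbb{T}^2}}$: no Fisher information, no skeleton equation, no entropy estimate, and no use of \eqref{eq:small-KS}. Your version thus gives honest continuity on mass-bounded sets.

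\textbf{Trade-off.} The paper's route works for any kernel in $L^p$, $p<2$, without oddness or a pointwise bound. Its uniform Fisher bound also shows, by weak lower semicontinuity, that $L^1$-limits of bounded-rate sequences lie in $\mathcal{E}_{\mathrm{fin}}$. That is what one needs to go beyond the statement, which simply assumes $\rho\in\mathcal{E}_{\mathrm{fin}}$.

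\textbf{A step to tighten: the uniform mass bound.} Membership in $\mathcal{E}_{\mathrm{fin}}$ bounds $\|\rho_n\|_{L^\infty_{[0,T]}L^1_{\mathbb{T}^2}}$ for each $n$, not uniformly, and your fallback of truncating in time fails. Your estimate really requires $\int_0^T\|\rho_n-\rho\|_{L^1}\|\rho_n\|_{L^1}\,dt\to0$, i.e.\ convergence in $L^2_tL^1_x$. For a counterexample, take $\rho_n=\rho+n\mathbf{1}_{[0,n^{-3/2}]}(t)f(x)$ with $f>0$ smooth and nonconstant. This converges in $L^1_{[0,T]}L^1_{\mathbb{T}^2}$, even with bounded Fisher information. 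Yet for $\varphi$ independent of $t$ the interaction term picks up $n^{1/2}\langle f\mathcal{V}[f],\nabla\varphi\rangle$, which is nonzero in general.

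The fix is the reduction the paper makes. If $\liminf_n\mathcal{I}(\rho_n)<\infty$, pass to a subsequence with $\mathcal{I}(\rho_k)<\infty$. Test with $\lambda a(t)$, where $a\in C^\infty_c((0,T))$ and $\lambda\in\mathbb{R}$: finiteness of the supremum forces $\int_0^T\|\rho_k(t)\|_{L^1_{\mathbb{T}^2}}a'(t)\,dt=0$. Hence $t\mapsto\|\rho_k(t)\|_{L^1_{\mathbb{T}^2}}$ is a.e.\ constant, equal to $T^{-1}\|\rho_k\|_{L^1_{[0,T]}L^1_{\mathbb{T}^2}}$, which is uniformly bounded. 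This is the ``$L^1$ conservation'' the paper itself invokes.

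\textbf{The boundary term.} The paper's proof does not address $\langle\rho,\varphi\rangle|_0^T$ at all, so you were right to flag it. Your cutoff idea is a workable way to make it rigorous: test with $\varphi$ cut off near the endpoints, and recover the traces from the weak time continuity that finite-rate paths inherit from the controlled equation.
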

\begin{proof}
    If $\liminf_{\rho_n\rightarrow \rho}\mathcal{I}(\rho_n)=+\infty$, then \eqref{IUP LSC} is trivial. Therefore, we only consider the case that
\begin{align*}
\liminf_{n \to \infty}\mathcal{I}(\rho_n)<\infty, 
\end{align*}
which implies that there exists a subsequence $\left\{\rho_{k}\right\}_{k\in \mathbb{N}} \subset \left\{\rho_{n}\right\}_{n\in \mathbb{N}}$ such that it converges to $\rho$ in  $ L^1_{[0,T]}L^1_{\mathbb{T}^2}$ and
\begin{align}\label{limit-subseq-rhonk}
   \lim_{k \rightarrow\infty}\mathcal{I}(\rho_{k})=\liminf_{n \rightarrow \infty}\mathcal{I}(\rho_n). 
\end{align}
According to Lemma \ref{ske-lem} and \eqref{limit-subseq-rhonk}, we find a $\Psi^{k}$ for each $k \in \mathbb{N}$ such that $\rho_{k}$ is a weak solution to the skeleton equation
\begin{align*}
\partial_t\rho_{k}=\Delta\rho_{k}-\nabla\cdot(\rho_{k} \mathcal{V}[\rho_{k}])-\nabla\cdot(\rho_{k}\nabla\Psi^{k}), \quad \text{with} \ \sup_{k \in \mathbb{N}}\mathcal{I}(\rho_{k})=\sup_{k \in \mathbb{N}}\frac{1}{2}\|\sqrt{\rho_{k}}\nabla\Psi^{k}\|_{L^2_{[0,T]}L^2_{\mathbb{T}^2}}^2 < \infty.
\end{align*}
With the same argument as used in Proposition \ref{entropy estimate}, we derive the following entropy estimate
\begin{align}\label{SK EE}
\int_{\mathbb{T}^2}\Psi(\rho_k)\mathrm{d}x \Big|_0^T+2\int_0^T\int_{\mathbb{T}^2}|\nabla\sqrt{\rho_k}|^2{d}x{d}t
\lesssim \frac{1}{2}\|\sqrt{\rho_{k}}\nabla\Psi^{k}\|_{L^2_{[0,T]}L^2_{\mathbb{T}^2}}^2,
\end{align}
which implies that the $L^2$ norms of the Fisher information is bounded uniformly in $k$. It follows that $\{\rho_k\}_{k \in \mathbb{N}}$ in bounded in $L^2([0,T] \times \mathbb{T}^2)$, and therefore by further taking a subsequence that we still label by $k \in \mathbb{Z}$ with  a slight abuse of notation, $\rho_k \rightharpoonup \rho$ weakly in $L^2([0,T] \times \mathbb{T}^2)$, and in particular $\rho \in L^2([0,T] \times \mathbb{T}^2)$. Besides, we have according to the Gagliardo-Nirenberg inequality that 
\begin{align}\label{eq:Lp'-solution}
\|\rho_k\|_{L^{p'}_{\mathbb{T}^2}} = \|\sqrt{\rho_k}\|^2_{L_{\mathbb{T}^2}^{2p'}} \lesssim  \| \nabla \sqrt{\rho_k}\|^{2/p}_{L_{\mathbb{T}^2}^{2}} \|\sqrt{\rho_k}\|^{2/p'}_{L_{\mathbb{T}^2}^{2}}+C(\rho_0) = \| \nabla \sqrt{\rho_k}\|^{2/p}_{L_{\mathbb{T}^2}^{2}} \|\sqrt{\rho_k}\|^{1/p'}_{L_{\mathbb{T}^2}^{1}}+C(\rho_0),
\end{align}
where $p \in (1,2)$ and $1/p + 1/p' = 1$. For every $\varphi\in C^{\infty}([0,T]\times\mathbb{T}^2)$, in order to show the continuity of $\Lambda(\cdot,\varphi)$, it is sufficient to prove that
\begin{align}
 \lim_{k \to \infty}\left\langle (\rho_{k} - \rho) \mathcal{V}[\rho],\nabla\varphi \right\rangle = 0, \label{eq:conv-interaction1}\\
  \lim_{k \to \infty}\left\langle  \rho_{k} \mathcal{V}[\rho_{k} - \rho],\nabla\varphi \right\rangle = 0.\label{eq:conv-interaction2}
\end{align}
As $\mathcal{V}[\rho] \in L^2_{[0,T] \times \mathbb{T}^2}$, \eqref{eq:conv-interaction1} directly follows from weak convergence of $\rho_k$, and \eqref{eq:conv-interaction2} is due to \eqref{eq:Lp'-solution} and the $L^1$ conservation, since
\begin{align*}
\left\|\rho_{k} \mathcal{V}[\rho_{k} - \rho]\right\|_{L^1([0,T] \times \mathbb{T}^2)}\leq &\int_{0}^{T}\|\rho_{k}\|_{L^{p'}_{\mathbb{T}^2}} \left\|\nabla \mathcal{G}\ast (\rho_{k}-\rho) \right\|_{L^p_{\mathbb{T}^2}}\mathrm{d}s\\
& \lesssim \|\rho_{k}\|_{L^\infty_{[0,T]}L^1_{\mathbb{T}^2}}^{\frac{1}{p'}}
\int_{0}^{T}\left(\|\nabla\sqrt{\rho_{k}}\|_{L^2_{\mathbb{T}^2}}^{\frac{2}{p}}+C(\rho_0)\right)
  \|\rho_{k}-\rho \|_{L^1_{\mathbb{T}^2}}\mathrm{d}s,
\end{align*}
where we use \eqref{eq:Lp'-solution} and $\nabla \mathcal{G} \in L^p$ for any $p < 2$ in the last inequality, and 
\begin{align*}
\int_{0}^{T}\left(\|\nabla\sqrt{\rho_{k}}\|_{L^2_{\mathbb{T}^2}}^{\frac{2}{p}}+C(\rho_0)\right)
 \|\rho_{k}-\rho \|_{L^1_{\mathbb{T}^2}}\mathrm{d}s  \leq& \left(\|\nabla\sqrt{\rho_{k}}\|_{L^2_{[0,T]}L^2_{\mathbb{T}^2}}^{\frac{2}{p}}+C(\rho_0,T)\right)
\left(\int_{0}^{T}\|\rho_{k} - \rho\|_{L^1_{\mathbb{T}^2}}^{\frac{p}{p-1}}\mathrm{d}s\right)^{\frac{p-1}{p}}\\
 \lesssim &
\|\rho_{k}-\rho \|_{L^1_{[0,T]}L^1_{\mathbb{T}^2}}^{\frac{p-1}{p}},
\end{align*}
for any $p \in (1,2)$, where we use the $L^1$ conservation of $\{\rho_k\}$ and $\rho$ in the last step.  Consequently, $\Lambda(\rho,\varphi)$ is a continuous function of $\rho$ with respect to the strong topology on $L^1_{[0,T] \times \mathbb{T}^2}$ and therefore, $\mathcal{I}$ is lower semi--continuous. 
\end{proof}

The following upper bound of large deviations will be proved by using an exponential martingale method and the exponential tightness Lemma \ref{exponential-tight}. 

\begin{proof}[Proof of \eqref{LDP LB} in Theorem \ref{thm:LDP}]
    For every $t\in[0,T]$, $\varphi\in C^{\infty}([0,T]\times\mathbb{T}^2)$, we define $\mathcal{N}^{\varphi}:[0,T]\times \mathcal{E}_{\mathrm{fin}}$ as
\begin{align*}
\mathcal{N}_t^{\varphi}(\rho) = \langle \rho,\varphi \rangle_{L^2_{\mathbb{T}^2}} \Big|_0^t-\left\langle \rho,(\partial_t + \Delta)\varphi\right\rangle_{L^2_{[0,T]}L^2_{\mathbb{T}^2}} - \left\langle\rho \mathcal{V}[\rho],\nabla\varphi \right\rangle_{L^2_{[0,T]}L^2_{\mathbb{T}^2}}.
\end{align*}
According to the definition of $\mathcal{N}^{\varphi}$ and $\mu^{\varepsilon}$, we obtain that
 $\mathcal{N}_{\cdot}^{\varphi}(\rho^{\varepsilon})$ is a $\mu^{\varepsilon}$--martingale with quadratic variation
\begin{align*}
\langle\mathcal{N}^{\varphi}\rangle(t,\rho^{\varepsilon})
\leq\varepsilon \int_0^t\|\sqrt{\rho^{\varepsilon}}\nabla\varphi\|_{L^2_{\mathbb{T}^2}}^2\mathrm{d}s,
\end{align*}
and we further denote $\mathcal{Q}^{\varphi}$ as the exponential martingale with respect to $\mathcal{N}^{\varphi}$, i.e. 
\begin{equation*}
\mathcal{Q}_t^{\varphi}(\rho^{\varepsilon})
:=\exp\Big\{\mathcal{N}_t^{\varphi}(\rho^{\varepsilon})
-\frac{1}{2}\langle\mathcal{N}^{\varphi}\rangle(t,\rho^{\varepsilon})\Big\},
\end{equation*}
and then for every compact set $F$ of $L^1([0,T] \times \mathbb{T}^2)$, we can then bound its probability from above by
\begin{align*}
\mu^{\varepsilon}(F) \leq \sup_{\rho\in F} \left[Q_T^{\varphi}(\rho)\right]^{-1}\int_{F}Q_T^{\varphi}(\rho)\mu^{\varepsilon}(\mathrm{d}\rho)\leq \exp\Big\{-\inf_{\rho\in F}\Big(\mathcal{N}_t^{\varphi}(\rho)-\frac{\varepsilon }{2}\int_0^t\|\sqrt{\rho}\nabla\varphi\|_{L^2_{\mathbb{T}^2}}^2\mathrm{d}s\Big)\Big\},
\end{align*}
where we use the fact that $\mathcal{Q}^{\varphi}(\rho^{\varepsilon})$ is a non-negative $\mu^{\varepsilon}$--martingale in the last inequality, which can be rewritten as
\begin{align*}
\varepsilon \log\mu^{\varepsilon}(F) \leq -\inf_{\rho\in F}\Big(\mathcal{N}_t^{\varepsilon \varphi}(\rho)-\frac{1}{2}\int_0^t\|\sqrt{\rho}\nabla(\varepsilon\varphi)\|_{L^2_{\mathbb{T}^2}}^2\mathrm{d}s\Big)
\end{align*}
Consequently, taking infimum with respect to $\varphi\in C^{\infty}([0,T]\times\mathbb{T}^2)$ and applying the min--max Lemma in \cite[Lemma 3.2]{KL}, we get from the construction of $\mathcal{N}^{\varphi}$ that
\begin{align*}
\limsup_{\varepsilon\rightarrow 0}\varepsilon \log\mu^{\varepsilon}(F)
\leq - \inf_{\rho\in F}\sup_{\varphi\in C^{\infty}([0,T]\times\mathbb{T}^2)}\Big\{\langle \rho,\varphi \rangle_{L^2_{\mathbb{T}^2}} \Big|_0^t &- \left\langle \rho,(\partial_t + \Delta)\varphi\right\rangle_{L^2_{[0,T]}L^2_{\mathbb{T}^2}} - \left\langle\rho \mathcal{V}[\rho],\nabla\varphi \right\rangle_{L^2_{[0,T]}L^2_{\mathbb{T}^2}}\\
& - \frac{1}{2}\int_0^t\|\sqrt{\rho}\nabla\varphi\|_{L^2_{\mathbb{T}^2}}^2\mathrm{d}s\Big\},
\end{align*}
Recalling the definition of the rate function from \eqref{I0-intro}, we conclude that for every compact set $F\subset L^1_{[0,T]\times \mathbb{T}^2}$, 
\begin{align*}
&\limsup_{\varepsilon\rightarrow0}\varepsilon \log\mu^{\varepsilon}(F)\leq-\inf_{\rho\in F}\mathcal{I}(\rho). 
\end{align*}
The upper bound of LDP \eqref{LDP LB} then holds for every closed set of $L^1_{[0,T]\times \mathbb{T}^2}$ due to the exponential tightness of $\mu^{\varepsilon}$ established in Proposition \ref{exponential-tight}. 
\end{proof}

\noindent{\bf  Acknowledgements}\quad The third author acknowledges the support by the US Army Research Office, grant W911NF2310230.

\bibliographystyle{alphaurl}
\bibliography{Ji-Sun-Wu.bib}

\newcommand{\etalchar}[1]{$^{#1}$}
\begin{thebibliography}{BDSG{\etalchar{+}}15}

\bibitem[BDM11]{BDM11}
Amarjit Budhiraja, Paul Dupuis, and Vasileios Maroulas.
\newblock Variational representations for continuous time processes.
\newblock {\em Ann. Inst. Henri Poincar\'{e} Probab. Stat.}, 47(3):725--747,
  2011.
\newblock \href {https://doi.org/10.1214/10-AIHP382}
  {\path{doi:10.1214/10-AIHP382}}.

\bibitem[BDSG{\etalchar{+}}15]{BDGJL}
Lorenzo Bertini, Alberto De~Sole, Davide Gabrielli, Giovanni Jona-Lasinio, and
  Claudio Landim.
\newblock Macroscopic fluctuation theory.
\newblock {\em Rev. Modern Phys.}, 87(2):593--636, 2015.
\newblock \href {https://doi.org/10.1103/RevModPhys.87.593}
  {\path{doi:10.1103/RevModPhys.87.593}}.

\bibitem[BFM16]{BrzezniakFlandoliMaurelli2016Euler}
Zdzis{\l}aw Brze{\'z}niak, Franco Flandoli, and Mario Maurelli.
\newblock Existence and uniqueness for stochastic 2d euler flows with bounded
  vorticity.
\newblock {\em Archive for Rational Mechanics and Analysis}, 221(1):107--142,
  2016.
\newblock \href {https://doi.org/10.1007/s00205-015-0957-8}
  {\path{doi:10.1007/s00205-015-0957-8}}.

\bibitem[BJW19]{BJW19}
Didier Bresch, Pierre-Emmanuel Jabin, and Zhenfu Wang.
\newblock On mean-field limits and quantitative estimates with a large class of
  singular kernels: application to the {P}atlak-{K}eller-{S}egel model.
\newblock {\em C. R. Math. Acad. Sci. Paris}, 357(9):708--720, 2019.
\newblock \href {https://doi.org/10.1016/j.crma.2019.09.007}
  {\path{doi:10.1016/j.crma.2019.09.007}}.

\bibitem[CD16]{CD16}
Jurandir Ceccon and Carlos~E. Dur{\'a}n.
\newblock Sharp constants in {R}iemannian {$L^p$}-{G}agliardo-{N}irenberg
  inequalities.
\newblock {\em J. Math. Anal. Appl.}, 433(1):260--281, 2016.
\newblock \href {https://doi.org/10.1016/j.jmaa.2015.07.023}
  {\path{doi:10.1016/j.jmaa.2015.07.023}}.

\bibitem[CF23]{CF23arma}
Federico Cornalba and Julian~L. Fischer.
\newblock The dean--kawasaki equation and the structure of density fluctuations
  in systems of diffusing particles.
\newblock {\em Archive for Rational Mechanics and Analysis}, 247(5):76, 2023.
\newblock \href {https://doi.org/10.1007/s00205-023-01903-7}
  {\path{doi:10.1007/s00205-023-01903-7}}.

\bibitem[CF25]{CF23}
Andrea Clini and Benjamin Fehrman.
\newblock A central limit theorem for nonlinear conservative {SPDE}s.
\newblock {\em Stochastics and Partial Differential Equations: Analysis and
  Computations}, 13:1407--1450, 2025.
\newblock \href {https://doi.org/10.1007/s40072-025-00359-y}
  {\path{doi:10.1007/s40072-025-00359-y}}.

\bibitem[CFIR26]{CFIR26}
Federico Cornalba, Julian Fischer, Jonas Ingmanns, and Claudia Raithel.
\newblock Density fluctuations in weakly interacting particle systems via the
  dean--kawasaki equation.
\newblock {\em The Annals of Probability}, 54(1):155--215, 2026.
\newblock \href {https://doi.org/10.1214/25-AOP1763}
  {\path{doi:10.1214/25-AOP1763}}.

\bibitem[DE97]{DE97}
Paul Dupuis and Richard~S. Ellis.
\newblock {\em A weak convergence approach to the theory of large deviations}.
\newblock Wiley Series in Probability and Statistics: Probability and
  Statistics. John Wiley \& Sons, Inc., New York, 1997.
\newblock A Wiley-Interscience Publication.
\newblock \href {https://doi.org/10.1002/9781118165904}
  {\path{doi:10.1002/9781118165904}}.

\bibitem[Dea96]{D96}
David~S. Dean.
\newblock Langevin equation for the density of a system of interacting
  {L}angevin processes.
\newblock {\em J. Phys. A}, 29(24):L613--L617, 1996.
\newblock \href {https://doi.org/10.1088/0305-4470/29/24/001}
  {\path{doi:10.1088/0305-4470/29/24/001}}.

\bibitem[DFG20]{DFG20}
Nicolas Dirr, Benjamin~J. Fehrman, and Benjamin Gess.
\newblock Conservative stochastic pde and fluctuations of the symmetric simple
  exclusion process.
\newblock {\em arXiv: Probability}, 2020.

\bibitem[DFG26]{DFG}
Nicolas Dirr, Benjamin Fehrman, and Benjamin Gess.
\newblock Conservative stochastic {PDE} and fluctuations of the symmetric
  simple exclusion process.
\newblock {\em Communications in Mathematical Physics}, 407:Paper No. 74, 2026.
\newblock \href {https://doi.org/10.1007/s00220-026-05587-4}
  {\path{doi:10.1007/s00220-026-05587-4}}.

\bibitem[DG20]{DG20}
Konstantinos Dareiotis and Benjamin Gess.
\newblock Nonlinear diffusion equations with nonlinear gradient noise.
\newblock {\em Electron. J. Probab.}, 25:Paper No. 35, 43, 2020.
\newblock \href {https://doi.org/10.1214/20-ejp436}
  {\path{doi:10.1214/20-ejp436}}.

\bibitem[DJP25]{DJP25}
Ana Djurdjevac, Xiaohao Ji, and Nicolas Perkowski.
\newblock Weak error of dean--kawasaki equation with smooth mean-field
  interactions, 2025.
\newblock \href {http://arxiv.org/abs/2502.20929} {\path{arXiv:2502.20929}}.

\bibitem[DKP24]{DKP24}
Ana Djurdjevac, Helena Kremp, and Nicolas Perkowski.
\newblock Weak error analysis for a nonlinear spde approximation of the
  dean--kawasaki equation.
\newblock {\em Stochastics and Partial Differential Equations: Analysis and
  Computations}, 12:2330--2355, 2024.
\newblock \href {https://doi.org/10.1007/s40072-024-00324-1}
  {\path{doi:10.1007/s40072-024-00324-1}}.

\bibitem[Due16]{D16}
Mitia Duerinckx.
\newblock Mean-field limits for some {R}iesz interaction gradient flows.
\newblock {\em SIAM J. Math. Anal.}, 48(3):2269--2300, 2016.
\newblock \href {https://doi.org/10.1137/15M1042620}
  {\path{doi:10.1137/15M1042620}}.

\bibitem[Feh25]{fehrman2025stochastic}
Benjamin Fehrman.
\newblock Stochastic pdes with correlated, non-stationary stratonovich noise of
  dean--kawasaki type, 2025.
\newblock \href {http://arxiv.org/abs/2504.18370} {\path{arXiv:2504.18370}}.

\bibitem[FG95]{FG95}
Franco Flandoli and Dariusz Gatarek.
\newblock Martingale and stationary solutions for stochastic navier-stokes
  equations.
\newblock {\em Probab. Theory Related Fields}, 102(3):367--391, 1995.
\newblock \href {https://doi.org/10.1007/BF01192467}
  {\path{doi:10.1007/BF01192467}}.

\bibitem[FG16]{FG16}
Peter~K. Friz and Benjamin Gess.
\newblock Stochastic scalar conservation laws driven by rough paths.
\newblock {\em Ann. Inst. H. Poincar\'{e} C Anal. Non Lin\'{e}aire},
  33(4):933--963, 2016.
\newblock \href {https://doi.org/10.1016/j.anihpc.2015.01.009}
  {\path{doi:10.1016/j.anihpc.2015.01.009}}.

\bibitem[FG19]{FG19}
Benjamin Fehrman and Benjamin Gess.
\newblock Well-posedness of nonlinear diffusion equations with nonlinear,
  conservative noise.
\newblock {\em Arch. Ration. Mech. Anal.}, 233(1):249--322, 2019.
\newblock \href {https://doi.org/10.1007/s00205-019-01357-w}
  {\path{doi:10.1007/s00205-019-01357-w}}.

\bibitem[FG23]{FG23}
Benjamin Fehrman and Benjamin Gess.
\newblock Non-equilibrium large deviations and parabolic-hyperbolic {PDE} with
  irregular drift.
\newblock {\em Invent. Math.}, 234(2):573--636, 2023.
\newblock \href {https://doi.org/10.1007/s00222-023-01207-3}
  {\path{doi:10.1007/s00222-023-01207-3}}.

\bibitem[FG24]{FG24}
Benjamin Fehrman and Benjamin Gess.
\newblock Well-{P}osedness of the {D}ean--{K}awasaki and the {N}onlinear
  {D}awson--{W}atanabe {E}quation with {C}orrelated {N}oise.
\newblock {\em Arch. Ration. Mech. Anal.}, 248(2):Paper No. 20, 2024.
\newblock \href {https://doi.org/10.1007/s00205-024-01963-3}
  {\path{doi:10.1007/s00205-024-01963-3}}.

\bibitem[FG25]{FG25}
Benjamin~J. Fehrman and Benjamin Gess.
\newblock Conservative stochastic pdes on the whole space.
\newblock {\em Stochastics and Partial Differential Equations: Analysis and
  Computations}, 2025.
\newblock \href {https://doi.org/10.1007/s40072-025-00369-w}
  {\path{doi:10.1007/s40072-025-00369-w}}.

\bibitem[FGG22]{fehrman2022ergodicity}
Benjamin Fehrman, Benjamin Gess, and Rishabh~S. Gvalani.
\newblock Ergodicity and random dynamical systems for conservative {SPDE}s,
  2022.
\newblock \href {http://arxiv.org/abs/2206.14789} {\path{arXiv:2206.14789}}.

\bibitem[GH23]{GH23}
Benjamin Gess and Daniel Heydecker.
\newblock A rescaled zero-range process for the porous medium equation:
  Hydrodynamic limit, large deviations and gradient flow, 2023.
\newblock \href {http://arxiv.org/abs/2303.11289} {\path{arXiv:2303.11289}}.

\bibitem[GHW24]{GHW23}
Benjamin Gess, Daniel Heydecker, and Zhengyan Wu.
\newblock Landau--lifshitz--navier--stokes equations: Large deviations and
  relationship to the energy equality, 2024.
\newblock \href {http://arxiv.org/abs/2311.02223} {\path{arXiv:2311.02223}}.

\bibitem[GS15]{GS15}
Benjamin Gess and Panagiotis~E. Souganidis.
\newblock Scalar conservation laws with multiple rough fluxes.
\newblock {\em Commun. Math. Sci.}, 13(6):1569--1597, 2015.
\newblock \href {https://doi.org/10.4310/CMS.2015.v13.n6.a10}
  {\path{doi:10.4310/CMS.2015.v13.n6.a10}}.

\bibitem[GWZ25]{GWZ24}
Benjamin Gess, Zhengyan Wu, and Rangrang Zhang.
\newblock Higher order fluctuation expansions for nonlinear stochastic heat
  equations in singular limits.
\newblock {\em Stochastic Processes and their Applications}, 193:104847, 2025.
\newblock URL:
  \url{https://www.sciencedirect.com/science/article/pii/S0304414925002911},
  \href {https://doi.org/10.1016/j.spa.2025.104847}
  {\path{doi:10.1016/j.spa.2025.104847}}.

\bibitem[Hey23]{Heydecker23}
Daniel Heydecker.
\newblock Large deviations of kac's conservative particle system and energy
  nonconserving solutions to the boltzmann equation: A counterexample to the
  predicted rate function.
\newblock {\em The Annals of Applied Probability}, 33(3):1758--1826, 2023.
\newblock \href {https://doi.org/10.1214/22-AAP1852}
  {\path{doi:10.1214/22-AAP1852}}.

\bibitem[HWZ25]{HWZ25}
Zimo Hao, Zhengyan Wu, and Johannes Zimmer.
\newblock Kinetic theory with fluctuations: Strong well-posedness of the
  vlasov--fokker--planck--dean--kawasaki system, 2025.
\newblock \href {http://arxiv.org/abs/2511.10194} {\path{arXiv:2511.10194}}.

\bibitem[Jak97]{Jak97}
Adam Jakubowski.
\newblock The almost sure {S}korokhod representation for subsequences in
  nonmetric spaces.
\newblock {\em Teor. Veroyatnost. i Primenen.}, 42(1):209--216, 1997.
\newblock \href {https://doi.org/10.1137/S0040585X97976052}
  {\path{doi:10.1137/S0040585X97976052}}.

\bibitem[JW18]{JW18}
Pierre-Emmanuel Jabin and Zhenfu Wang.
\newblock Quantitative estimates of propagation of chaos for stochastic systems
  with {$W^{-1,\infty}$} kernels.
\newblock {\em Invent. Math.}, 214(1):523--591, 2018.
\newblock \href {https://doi.org/10.1007/s00222-018-0808-y}
  {\path{doi:10.1007/s00222-018-0808-y}}.

\bibitem[Kaw98]{K98}
Kyozi Kawasaki.
\newblock Microscopic analyses of the dynamical density functional equation of
  dense fluids.
\newblock {\em J. Statist. Phys.}, 93(3-4):527--546, 1998.
\newblock \href {https://doi.org/10.1023/B:JOSS.0000033240.66359.6c}
  {\path{doi:10.1023/B:JOSS.0000033240.66359.6c}}.

\bibitem[KL99]{KL}
Claude Kipnis and Claudio Landim.
\newblock {\em Scaling limits of interacting particle systems}, volume 320 of
  {\em Grundlehren der mathematischen Wissenschaften [Fundamental Principles of
  Mathematical Sciences]}.
\newblock Springer-Verlag, Berlin, 1999.
\newblock \href {https://doi.org/10.1007/978-3-662-03752-2}
  {\path{doi:10.1007/978-3-662-03752-2}}.

\bibitem[KLvR19]{KLvR19}
Vitalii Konarovskyi, Tobias Lehmann, and Max-K. von Renesse.
\newblock Dean-{K}awasaki dynamics: ill-posedness vs. triviality.
\newblock {\em Electron. Commun. Probab.}, 24:Paper No. 8, 9, 2019.
\newblock \href {https://doi.org/10.1214/19-ECP208}
  {\path{doi:10.1214/19-ECP208}}.

\bibitem[KLvR20]{KLR20}
Vitalii Konarovskyi, Tobias Lehmann, and Max von Renesse.
\newblock On dean--kawasaki dynamics with smooth drift potential.
\newblock {\em Journal of Statistical Physics}, 178(3):666--681, 2020.
\newblock \href {https://doi.org/10.1007/s10955-019-02449-3}
  {\path{doi:10.1007/s10955-019-02449-3}}.

\bibitem[KvR19]{KvR19}
Vitalii Konarovskyi and Max-K. von Renesse.
\newblock Modified massive {A}rratia flow and {W}asserstein diffusion.
\newblock {\em Comm. Pure Appl. Math.}, 72(4):764--800, 2019.
\newblock \href {https://doi.org/10.1002/cpa.21758}
  {\path{doi:10.1002/cpa.21758}}.

\bibitem[LPS13]{LPS13}
Pierre-Louis Lions, Beno\^{\i}t Perthame, and Panagiotis~E. Souganidis.
\newblock Stochastic averaging lemmas for kinetic equations.
\newblock In {\em S\'{e}minaire {L}aurent {S}chwartz---\'{E}quations aux
  d\'{e}riv\'{e}es partielles et applications. {A}nn\'{e}e 2011--2012},
  S\'{e}min. \'{E}qu. D\'{e}riv. Partielles, pages Exp. No. XXVI, 17. \'{E}cole
  Polytech., Palaiseau, 2013.

\bibitem[Mar10]{M10}
Mauro Mariani.
\newblock Large deviations principles for stochastic scalar conservation laws.
\newblock {\em Probab. Theory Related Fields}, 147(3-4):607--648, 2010.
\newblock \href {https://doi.org/10.1007/s00440-009-0218-6}
  {\path{doi:10.1007/s00440-009-0218-6}}.

\bibitem[McK67]{M67}
H.~P. McKean, Jr.
\newblock Propagation of chaos for a class of non-linear parabolic equations.
\newblock In {\em Stochastic {D}ifferential {E}quations ({L}ecture {S}eries in
  {D}ifferential {E}quations, {S}ession 7, {C}atholic {U}niv., 1967)}, pages
  41--57. Air Force Office Sci. Res., Arlington, Va., 1967.

\bibitem[MM24]{AA24}
Adrian Martini and Avi Mayorcas.
\newblock An additive-noise approximation to keller--segel--dean--kawasaki
  dynamics: Small-noise results, 2024.
\newblock \href {http://arxiv.org/abs/2410.17022} {\path{arXiv:2410.17022}}.

\bibitem[MM25]{AA25}
Adrian Martini and Avi Mayorcas.
\newblock An additive-noise approximation to keller--segel--dean--kawasaki
  dynamics: Local well-posedness of paracontrolled solutions.
\newblock {\em Stochastics and Partial Differential Equations: Analysis and
  Computations}, 13(2):956--1033, 2025.
\newblock \href {https://doi.org/10.1007/s40072-024-00343-y}
  {\path{doi:10.1007/s40072-024-00343-y}}.

\bibitem[MvRZ25]{MRZ25}
Fenna M{\"u}ller, Max von Renesse, and Johannes Zimmer.
\newblock Well-posedness for dean--kawasaki models of vlasov--fokker--planck
  type.
\newblock {\em Proceedings of the Royal Society A: Mathematical, Physical and
  Engineering Sciences}, 481(2289):20250089, 2025.
\newblock \href {https://doi.org/10.1098/rspa.2025.0089}
  {\path{doi:10.1098/rspa.2025.0089}}.

\bibitem[Pop25]{Shyam25}
Shyam Popat.
\newblock Well-posedness of the generalised {D}ean-{K}awasaki equation with
  correlated noise on bounded domains.
\newblock {\em Stochastic Process. Appl.}, 179:Paper No. 104503, 32, 2025.
\newblock \href {https://doi.org/10.1016/j.spa.2024.104503}
  {\path{doi:10.1016/j.spa.2024.104503}}.

\bibitem[PW25]{PW25}
Shyam Popat and Zhengyan Wu.
\newblock Ergodicity for the dean--kawasaki equation with dirichlet boundary
  conditions: Taming the square-root, 2025.
\newblock \href {http://arxiv.org/abs/2512.12861} {\path{arXiv:2512.12861}}.

\bibitem[Ren08]{Ren2008BDG}
Yao-Feng Ren.
\newblock On the burkholder--davis--gundy inequalities for continuous
  martingales.
\newblock {\em Statistics \& Probability Letters}, 78(17):3034--3039, 2008.
\newblock \href {https://doi.org/10.1016/j.spl.2008.05.024}
  {\path{doi:10.1016/j.spl.2008.05.024}}.

\bibitem[Sch22]{D22}
Lorenzo~Dello Schiavo.
\newblock The dirichlet--ferguson diffusion on the space of probability
  measures over a closed riemannian manifold.
\newblock {\em The Annals of Probability}, 50(2):591--648, 2022.
\newblock \href {https://doi.org/10.1214/21-AOP1541}
  {\path{doi:10.1214/21-AOP1541}}.

\bibitem[Ser20]{S20}
Sylvia Serfaty.
\newblock Mean field limit for {C}oulomb-type flows.
\newblock {\em Duke Math. J.}, 169(15):2887--2935, 2020.
\newblock With an appendix by Mitia Duerinckx and Serfaty.
\newblock \href {https://doi.org/10.1215/00127094-2020-0019}
  {\path{doi:10.1215/00127094-2020-0019}}.

\bibitem[Sim87]{Sim87}
Jacques Simon.
\newblock Compact sets in the space {$L^p(0,T;B)$}.
\newblock {\em Ann. Mat. Pura Appl. (4)}, 146:65--96, 1987.
\newblock \href {https://doi.org/10.1007/BF01762360}
  {\path{doi:10.1007/BF01762360}}.

\bibitem[WW25]{WW25}
Lin Wang and Zhengyan Wu.
\newblock Probabilistic approaches to the energy equality in forced surface
  quasi-geostrophic equations.
\newblock {\em Stochastics and Partial Differential Equations: Analysis and
  Computations}, dec 2025.
\newblock URL: \url{https://link.springer.com/10.1007/s40072-025-00405-9},
  \href {https://doi.org/10.1007/s40072-025-00405-9}
  {\path{doi:10.1007/s40072-025-00405-9}}.

\bibitem[WWZ24]{WWZ22}
Likun Wang, Zhengyan Wu, and Rangrang Zhang.
\newblock Dean--kawasaki equation with singular interactions and applications
  to dynamical ising--kac model, 2024.
\newblock \href {http://arxiv.org/abs/2207.12774} {\path{arXiv:2207.12774}}.

\bibitem[WZ22]{WZ22}
Zhengyan Wu and Rangrang Zhang.
\newblock Central limit theorem and moderate deviation principle for stochastic
  scalar conservation laws.
\newblock {\em J. Math. Anal. Appl.}, 516(1):Paper No. 126445, 26, 2022.
\newblock \href {https://doi.org/10.1016/j.jmaa.2022.126445}
  {\path{doi:10.1016/j.jmaa.2022.126445}}.

\bibitem[WZ24]{WZ24}
Zhengyan Wu and Rangrang Zhang.
\newblock Mckean--vlasov {PDE} with irregular drift and applications to large
  deviations for conservative {SPDE}s, 2024.
\newblock \href {http://arxiv.org/abs/2208.13142} {\path{arXiv:2208.13142}}.

\bibitem[WZZ21]{WZZ21}
Zhenfu Wang, Xianliang Zhao, and Rongchan Zhu.
\newblock Gaussian fluctuations for interacting particle systems with singular
  kernels. arxiv: 2105.13201, 2021.
\newblock \href {http://arxiv.org/abs/2105.13201} {\path{arXiv:2105.13201}}.

\end{thebibliography}

\end{document}